\newcommand{\TheTitle}{The Eddy Current--LLG Equations: FEM-BEM Coupling and A Priori Error Estimates} 
\newcommand{\TheAuthors}{M. Feischl and T. Tran}
\title{{\TheTitle}\thanks{Supported by the Australian Research Council under
grant numbers DP120101886 and DP160101755}}
\author{
  Michael Feischl\thanks{School of Mathematics and Statistics,
         The University of New South Wales,
         Sydney 2052, Australia
    (\email{m.feischl@unsw.edu.au}).}
  \and
  Thanh Tran\thanks{School of Mathematics and Statistics,
         The University of New South Wales,
         Sydney 2052, Australia (\email{thanh.tran@unsw.edu.au}).}
}
\def\H{\widetilde{H}}
\def\P{{\mathbb P}}
\def\R{{\mathbb R}}
\def\N{{\mathbb N}}
\def\Z{{\mathbb Z}}
\def\ba{\boldsymbol{a}}
\def\bb{\boldsymbol{b}}
\def\bc{\boldsymbol{c}}
\def\bv{\boldsymbol{v}}
\def\bu{\boldsymbol{u}}
\def\bE{\boldsymbol{E}}
\def\be{\boldsymbol{e}}
\def\bw{\boldsymbol{w}}
\def\bm{\boldsymbol{m}}
\def\bxi{\boldsymbol{\xi}}
\def\bH{\boldsymbol{H}}
\def\bphi{\boldsymbol{\phi}}
\def\bvarphi{\boldsymbol{\varphi}}
\def\bpsi{\boldsymbol{\psi}}
\def\bn{\boldsymbol{n}}
\def\btau{\boldsymbol{\tau}}
\def\NN{{\mathcal N}}
\def\DD{{\mathcal D}}
\def\KK{{\mathcal K}}
\def\PP{{\mathcal P}}
\def\SS{{\mathcal S}}
\def\TT{{\mathcal T}}
\def\XX{{\mathcal X}}
\def\Ltwo#1{{\mathbb L}^{2}{(#1})}
\def\W{{\mathbb{W}}}
\def\L{{\mathbb{L}}}
\def\H{{\mathbb{H}}}
\def\Hone#1{{\mathbb H}^{1}{(#1})}
\def\Hcurl#1{{\mathbb H}({\rm curl},#1)}
\def\dive{{\rm div}}
\def\norm#1#2{\|#1\|_{#2}}
\def\set#1#2{\big\{#1\,:\,#2\big\}}
\def\slp{\mathfrak{V}} 
\def\dlp{\mathfrak{K}} 
\def\hyp{\mathfrak{W}} 
\def\dtn{\mathfrak{S}} 
\def\mfrac#1#2{\mbox{$\frac{#1}{#2}$}}
\def\normL2#1#2{\|#1\|_{L^2(#2)}}
\newcommand{\dual}[3][]{#1\langle#2\,,\,#3#1\rangle}
\newtheorem{remark}[theorem]{Remark}
\def\revision#1{#1}
\def\comment#1{#1}
\begin{document}
\maketitle

\begin{abstract}
We analyze a numerical method for the coupled
system of the eddy current equations in $\R^3$ with the
Landau-Lifshitz-Gilbert equation in a bounded domain.
The unbounded domain is discretized by means of
finite-element/boundary-element coupling. Even though the
considered problem is strongly nonlinear, the numerical approach is
constructed such that only two linear systems per time step have to
be solved. We prove unconditional weak convergence (of a
subsequence) of the finite-element solutions
towards a weak solution. We establish a priori error estimates
if a sufficiently smooth strong solution exists.
Numerical experiments underlining the theoretical results are
presented.
\end{abstract}

\begin{keywords}
Landau--Lifshitz--Gilbert equation, eddy current,
finite element, boundary element, coupling, a priori error
estimates, ferromagnetism
\end{keywords}

\begin{AMS}
 Primary 35Q40, 35K55, 35R60, 60H15, 65L60, 65L20, 65C30; Secondary 82D45
\end{AMS}

\section{Introduction}
This paper deals with the coupling of finite element and boundary
element methods to solve the system of the eddy current
equations in
the whole 3D spatial space and the Landau-Lifshitz-Gilbert equation
(LLG), the so-called ELLG system or equations. The system is also called the
quasi-static Maxwell-LLG (MLLG) system.

The LLG is widely considered as a valid model of micromagnetic
phenomena occurring in, e.g., magnetic sensors, recording heads,
and magneto-resistive storage device~\cite{Gil,LL,prohl2001}. 
Classical results concerning existence and non-uniqueness of solutions can
be found in~\cite{as,vis}. In a ferro-magnetic material, magnetization is
created or affected by external electro-magnetic fields. It is therefore
necessary to augment the Maxwell system with the LLG, which describes the
influence of a ferromagnet; see e.g.~\cite{cim, KruzikProhl06, vis}.
Existence, regularity and local uniqueness for the MLLG equations are studied
in~\cite{CimExist}.

Throughout the literature, there are various works on numerical
approximation methods for the LLG, ELLG, and MLLG 
equations~\cite{alouges,alok,bako,bapr,cim,ellg,thanh} (the list is not
exhausted),
and even with the full Maxwell system on bounded
domains~\cite{BBP,MLLG}, and in the whole~$\R^3$~\cite{CarFab98}.
Originating from the seminal
work~\cite{alouges}, the recent works~\cite{ellg,thanh} consider a
similar numeric integrator for a bounded domain. While the numerical integrator of~\cite{thanh} treated LLG and eddy current simultaneously 
per time step,~\cite{ellg} adapted an idea of~\cite{MLLG} and decoupled the time-steps for LLG and
the eddy current equation. Our approach follows~\cite{ellg}.

This work studies the ELLG equations where we 
consider the electromagnetic field on the whole $\R^3$ and do not need to
introduce artificial boundaries. 
Differently from~\cite{CarFab98} where the Faedo-Galerkin method is
used to prove existence of weak solutions,
we extend the analysis for the integrator used in~\cite{alouges, ellg,
thanh} to a
finite-element/boundary-element (FEM/BEM) discretization of the
eddy current part on $\R^3$. This is inspired by the FEM/BEM coupling
approach designed for the pure eddy current problem
in~\cite{dual}, which
allows us to treat unbounded domains without introducing artificial
boundaries. Two approaches are proposed in~\cite{dual}: the
so-called ``magnetic (or $\bH$-based)
approach'' which eliminates the electric
field, retaining only the magnetic field as the unknown in the eddy-current
system, and the ``electric (or $\bE$-based)
approach'' which considers a
primitive of the electric field as the only unknown. The
coupling of the eddy-current system with the LLG dictates that
the first approach is more appropriate, \comment{because this coupling involves the
magnetic field in the LLG equation rather than the electric field}; see~\cref{eq:strong}.

The main results of this work are weak convergence of the 
discrete approximation towards a weak solution without any
condition on the space and time discretization as well as a~priori error estimates
under the condition that
the exact (strong) solution is sufficiently smooth. In particular, the first result implies the existence of weak solutions, whereas the latter shows that the \revision{smooth} strong solution is unique. 
To the best of our knowledge, no such results \revision{for the tangent plane scheme} have been proved for the LLG
equation. Therefore, we present the proof for this equation in a separate
section, before proving the result for the ELLG system.

\revision{As in~\cite{Abert_etal},} the proof is facilitated by use of an idea of~\cite{bartels} for the harmonic map heat (analyzed for LLG in~\cite{Abert_etal}), which avoids the
normalization of the solution in each time-step, and therefore allows us to use a
linear update formula. This also enables us to consider general quasi-uniform
triangulations for discretization and removes the
requirement for very shape-regular elements (all dihedral angles smaller
than $\pi/2$) present in previous works on this topic.

The remainder of this work is organized as follows.
\Cref{section:model} introduces the coupled problem and the
notation, presents the numerical algorithm, and states the main
results of this paper. \Cref{sec:pro} is devoted to the proofs of
these main results. Numerical results are presented in
\Cref{section:numerics}.
The final section, the Appendix, contains the
proofs of some rather elementary or well-known results.

 \section{Model Problem \& Main Results}\label{section:model}
 \subsection{The problem}\label{subsec:pro}
 Consider a bounded Lipschitz domain $D\subset \R^3$ with connected
boundary $\Gamma$ having the outward normal vector $\bn$.
We define $D^\ast:=\R^3\setminus\overline D$,
$D_T:=(0,T)\times D$, $\Gamma_T := (0,T)\times\Gamma$, 
$D_T^{\ast}:=(0,T)\times D^\ast$,
and $\R^3_T := (0,T)\times\R^3$ for $T>0$. \revision{For simplicity, we assume that $D^\ast$ is simply connected.}
 We start with the quasi-static approximation of the full Maxwell-LLG system from~\cite{vis} which reads as
\begin{subequations}\label{eq:strong}
 \begin{alignat}{2}
  \bm_t - \alpha\bm\times\bm_t &= -\bm \times \bH_{\rm eff}
 &&\quad\text{in }D_T,\label{eq:llg}\\
\sigma\bE -\nabla\times\bH&=0&&\quad\text{in }\R^3_T,\label{eq:MLLG1}\\
\mu_0\bH_t +\nabla \times\bE &=-\mu_0\widetilde\bm_t&&\quad\text{in }\R^3_T,\label{eq:MLLG2}\\
{\rm div}(\bH+\widetilde\bm)&=0 &&\quad\text{in }\R^3_T,\label{eq:MLLG3}\\
{\rm div}(\bE)&=0&&\quad\text{in }D^\ast_T,
\end{alignat}
\end{subequations}
where $\widetilde \bm$ is
the zero extension of $\bm$ to $\R^3$ 
and $\bH_{\rm eff}$ is the
effective field defined by $\bH_{\rm eff}= C_e\Delta\bm+\bH$ for
some constant $C_e>0$.
Here the parameter $\alpha>0$ and permeability $\mu_0\geq0$ are
constants, whereas the conductivity $\sigma$ takes a constant
positive value in
$D$ and the zero value in $D^\ast$.
\Cref{eq:MLLG3} is understood in the
distributional sense because there is a jump of~$\widetilde\bm$
across~$\Gamma$. Note that $\bH_{\rm eff}$ contains only the high order term for simplicity. A refined analysis might also allow us to include lower order terms (anisotropy, exterior applied field)
as done in~\cite{bsu}.

It follows from~\cref{eq:llg} that $|\bm|$ is constant. We
follow the usual practice to normalize~$|\bm|$ (and thus
the same condition is required for $|\bm^0|$).
The following conditions are imposed on the solutions
of~\cref{eq:strong}:
\begin{subequations}\label{eq:con}
\begin{alignat}{2}
\partial_n\bm&=0
&& \quad\text{on }\Gamma_T,\label{eq:con1} 
\\
|\bm| &=1
&& \quad\text{in } D_T, \label{eq:con2}
\\
\bm(0,\cdot) &= \bm^0
&& \quad\text{in } D, \label{eq:con3}
\\
\bH(0,\cdot) &= \bH^0
&& \quad\text{in } \R^3,
\\
|\bH(t,x)|&=\mathcal{O}(|x|^{-1})
&& \quad\text{as }|x|\to \infty,
\end{alignat}
\end{subequations}
where $\partial_n$ denotes the normal derivative.
The initial data~$\bm^0$ and~$\bH^0$ satisfy~$|\bm^0|=1$ in~$D$ and
\begin{align}\label{eq:ini}
\begin{split}
 {\rm div}(\bH^0 + \widetilde \bm^0)&=0\quad\text{in }\R^3.
\end{split}
\end{align}

The condition~\cref{eq:con2} together with basic
properties of the cross product leads to the following equivalent
formulation of~\cref{eq:llg}:
\begin{align}\label{eq:llg2}
\alpha\bm_t+\bm\times\bm_t= \bH_{\rm eff}-(\bm\cdot \bH_{\rm eff})\bm
\quad\text{in } D_T.
\end{align}

Below, we focus on an $\bH$-based formulation of the problem. 
It is possible to recover $\bE$ once $\bH$ and $\bm$ are known;
see~\cref{eq:el}

\subsection{Function spaces and notations}\label{subsec:fun spa}
Before introducing the concept of weak solutions
to problem~\cref{eq:strong}--\cref{eq:con} 
we need the following definitions of function
spaces. Let $\Ltwo{D}:=L^2(D;\R^3)$ and $\Hcurl{D}:=\set{\bw\in
\Ltwo{D}}{\nabla\times\bw\in\Ltwo{D}}$.
We define $H^{1/2}(\Gamma)$ as the usual trace space of $H^1(D)$ and define its dual space $H^{-1/2}(\Gamma)$ by extending
the $L^2$-inner product on $\Gamma$.
For convenience we denote
\[
\XX:=\set{(\bxi,\zeta)\in\Hcurl{D}\times
H^{1/2}(\Gamma)}{\bn\times\bxi|_\Gamma =\bn\times \nabla_\Gamma\zeta
\text{ in the sense of traces}}.
\]
Recall that~$\bn\times\bxi|_\Gamma$ is the tangential trace (or
twisted tangential trace) of~$\bxi$, and $\nabla_\Gamma\zeta$ is the
surface gradient of~$\zeta$. Their definitions and properties can
be found in~\cite{buffa, buffa2}.

Finally, if $X$ is a normed vector space then, for $m\geq 0$ and $p\in\N\cup\{\infty\}$, $L^2(0,T;X)$,
$H^m(0,T;X)$, and $W^{m,p}(0,T;X)$ 
denote the usual Lebesgue and Sobolev spaces of functions
defined on $(0,T)$ and taking values in $X$.

We finish this subsection with the clarification of the meaning of the
cross product between different mathematical objects.
For any vector functions $\bu, \bv, \bw$ 
we denote
\begin{gather*}
\bu\times\nabla\bv
:=
\left(
\bu\times\frac{\partial\bv}{\partial x_1},
\bu\times\frac{\partial\bv}{\partial x_2},
\bu\times\frac{\partial\bv}{\partial x_3}
\right), 
\quad
\nabla\bu\times\nabla\bv
:=
\sum_{i=1}^3
\frac{\partial\bu}{\partial x_i}
\times
\frac{\partial\bv}{\partial x_i} 
\\
\intertext{and}
(\bu\times\nabla\bv)\cdot \nabla\bw
:=
\sum_{i=1}^3
\left(
\bu\times\frac{\partial\bv}{\partial x_i}
\right)
\cdot \frac{\partial\bw}{\partial x_i}.
\end{gather*}

\subsection{Weak solutions}\label{subsec:wea sol}
A weak formulation for~\cref{eq:llg} is well-known, see
e.g.~\cite{alouges, thanh}. 
Indeed, by multiplying~\cref{eq:llg2} by~$\bphi\in
C^\infty(D_T;\R^3)$, using integration by parts, we
deduce
\[
\alpha\dual{\bm_t}{\bm\times\bphi}_{D_T} 
+
\dual{\bm\times\bm_t}{\bm\times\bphi}_{D_T}+C_e 
\dual{\nabla\bm}{\nabla(\bm\times\bphi)}_{D_T}
=
\dual{\bH}{\bm\times\bphi}_{D_T}.
\]

To tackle the eddy current equations on $\R^3$, we aim to employ
FE/BE coupling methods.
To that end, we employ the \emph{magnetic} approach
from~\cite{dual}, which eventually results in a variant of the
\emph{Trifou}-discretization of the eddy-current Maxwell equations.

Multiplying~\cref{eq:MLLG2} by~$\bxi\in C^{\infty}(D,\R^3)$
satisfying~$\nabla\times\bxi=0$ in~$D^\ast$, integrating over~$\R^3$,
and using integration by parts, we obtain for almost
all~$t\in[0,T]$
\[
\mu_0
\dual{\bH_t(t)}{\bxi}_{\R^3}
+
\dual{\bE(t)}{\nabla\times\bxi}_{\R^3}
=
-\mu_0
\dual{\bm_t(t)}{\bxi}_{D}.
\]
Using~$\nabla\times\bxi=0$ in~$D^\ast$ and~\cref{eq:MLLG1} we deduce
\[
\mu_0
\dual{\bH_t(t)}{\bxi}_{\R^3}
+
\sigma^{-1}\dual{\nabla\times\bH(t)}{\nabla\times\bxi}_{D}
=
-\mu_0
\dual{\bm_t(t)}{\bxi}_{D}.
\]
Since~$\nabla\times\bH=\nabla\times\bxi=0$ in~$D^\ast$ \revision{and $D^\ast$ is simply connected by definition (a workaround for non-simply connected $D^\ast$ is presented in~\cite{notsimply})}, 
there exists~$\varphi$ and~$\zeta$ such
that~$\bH=\nabla\varphi$ and~$\bxi=\nabla\zeta$ in~$D^\ast$. 
Therefore, the above equation can be rewritten as
\[
\mu_0
\dual{\bH_t(t)}{\bxi}_{D}
+
\mu_0
\dual{\nabla\varphi_t(t)}{\nabla\zeta}_{D^\ast}
+
\sigma^{-1}\dual{\nabla\times\bH(t)}{\nabla\times\bxi}_{D}
=
-\mu_0
\dual{\bm_t(t)}{\bxi}_{D}.
\]
Since~\cref{eq:MLLG3} implies~$\dive(\bH)=0$ in~$D^\ast$,
we have~$\Delta\varphi=0$ in~$D^\ast$, so that
(formally)~$\Delta\varphi_t=0$ in~$D^\ast$. Hence
integration by parts yields
\begin{equation}\label{eq:Ht}
\mu_0
\dual{\bH_t(t)}{\bxi}_{D}
-
\mu_0
\dual{\partial_n^+\varphi_t(t)}{\zeta}_{\Gamma}
+
\sigma^{-1}\dual{\nabla\times\bH(t)}{\nabla\times\bxi}_{D}
=
-\mu_0
\dual{\bm_t(t)}{\bxi}_{D},
\end{equation}
where~$\partial_n^+$ is the exterior Neumann trace operator with
the limit taken from~$D^\ast$. The advantage of the above
formulation is that no integration over the unbounded domain~$D^\ast$ is
required.
The exterior Neumann trace~$\partial_n^+\varphi_t$ can be computed
from the exterior Dirichlet trace~$\lambda$ of~$\varphi$ by using the
Dirichlet-to-Neumann operator~$\dtn$, which is defined as follows.

Let $\gamma^-$ be the interior Dirichlet trace
operator and $\partial_n^-$ be the interior normal derivative
or Neumann trace operator.
(The $-$ sign indicates \revision{that} the trace is taken from $D$.)
Recalling the fundamental solution of the Laplacian
$G(x,y):=1/(4\pi|x-y|)$, we introduce the following integral operators
defined formally on $\Gamma$ as 
\begin{align*}
\slp(\lambda):=\gamma^-\overline\slp(\lambda),
\quad
\dlp(\lambda):=\gamma^-\overline\dlp(\lambda)+\mfrac12,
\quad\text{and}\quad
\hyp(\lambda):=-\partial_n^-\overline\dlp(\lambda),
\end{align*}
where, for $x\notin\Gamma$,
\begin{align*}
\overline\slp(\lambda)(x)
:=
\int_{\Gamma} G(x,y) \lambda(y)\,ds_y
\quad\text{and}\quad 
\overline\dlp(\lambda)(x)
:=\int_{\Gamma} \partial_{n(y)}G(x,y)\lambda(y)\,ds_y,
\end{align*}
\revision{see, e.g.,~\cite{mclean} for further details.}
Moreover, let $\dlp^\prime$ denote the adjoint operator of $\dlp$
with respect to the extended $L^2$-inner product.
Then the exterior Dirichlet-to-Neumann map $\dtn\colon
H^{1/2}(\Gamma)\to H^{-1/2}(\Gamma)$ can be represented as
\begin{equation}\label{eq:dtn}
\dtn 
= 
- \slp^{-1}(1/2-\dlp).
\end{equation}
\revision{Another representation is}
\begin{equation}\label{eq:dtn2}
\dtn 
=
-(1/2-\dlp^\prime) \slp^{-1}(1/2-\dlp)-\hyp.
\end{equation}

Recall that~$\varphi$ satisfies~$\bH=\nabla\varphi$ in~$D^\ast$.
We can choose~$\varphi$ satisfying~$\varphi(x)=O(|x|^{-1})$ as
$|x|\to\infty$.
Now if~$\lambda=\gamma^+\varphi$ 
then~$\lambda_t=\gamma^+\varphi_t$.
Since~$\Delta\varphi=\Delta\varphi_t=0$ in~$D^\ast$, and since the
exterior Laplace problem has a unique solution 
we
have~$\dtn\lambda=\partial_n^+\varphi$
and~$\dtn\lambda_t=\partial_n^+\varphi_t$.
Hence~\cref{eq:Ht} can be rewritten as
\begin{equation}\label{eq:H lam t}
\dual{\bH_t(t)}{\bxi}_{D}
-
\dual{\dtn\lambda_t(t)}{\zeta}_{\Gamma}
+
\mu_0^{-1}
\sigma^{-1}\dual{\nabla\times\bH(t)}{\nabla\times\bxi}_{D}
=
-
\dual{\bm_t(t)}{\bxi}_{D}.
\end{equation}
We remark that if~$\nabla_\Gamma$ denotes the surface gradient
operator on~$\Gamma$ then it is well-known that
$
\nabla_\Gamma\lambda
=
(\nabla\varphi)|_{\Gamma}
-
(\partial_n^+\varphi)\bn
=
\bH|_{\Gamma}
-
(\partial_n^+\varphi)\bn;
$
see e.g.~\cite[Section~3.4]{Monk03}. Hence~$\bn\times\nabla_\Gamma\lambda =
\bn\times\bH|_{\Gamma}$.

The above analysis prompts us to define the following
weak formulation.

\begin{definition}\label{def:fembemllg}
A triple $(\bm,\bH,\lambda)$ satisfying
\begin{align*}
\bm &\in \Hone{D_T}
\quad\text{and}\quad
\bm_t|_{\Gamma_T} \in L^2(0,T;H^{-1/2}(\Gamma)),
\\
\bH &\in L^2(0,T;\Hcurl{D})\cap H^1(0,T;\Ltwo{D}), 
\\
\lambda &\in H^1(0,T;H^{1/2}(\Gamma))
\end{align*}
is called a weak solution to~\cref{eq:strong}--\cref{eq:con}
if the following statements hold 
 \begin{enumerate}
  \item $|\bm|=1$ almost everywhere in $D_T$; \label{ite:1}

  \item $\bm(0,\cdot)=\bm^0$, $\bH(0,\cdot)=\bH^0$, and 
$\lambda(0,\cdot)=\gamma^+ \varphi^0$ where~$\varphi^0$ is a scalar
function satisfies $\bH^0=\nabla\varphi^0$ in~$D^\ast$ (the
assumption~\cref{eq:ini} ensures the existence
of~$\varphi^0$); \label{ite:2}

  \item For all $\bphi\in C^\infty(D_T;\R^3)$
\label{ite:3}
  \begin{subequations}\label{eq:wssymm}
  \begin{align}
\alpha\dual{\bm_t}{\bm\times\bphi}_{D_T} 
&+
\dual{\bm\times\bm_t}{\bm\times\bphi}_{D_T}+C_e 
\dual{\nabla\bm}{\nabla(\bm\times\bphi)}_{D_T}
\nonumber
\\
&=
\dual{\bH}{\bm\times\bphi}_{D_T};
  \label{eq:wssymm1}
\end{align}

\item There holds $\bn\times \nabla_\Gamma\lambda = \bn\times
\bH|_{\Gamma}$ in the sense of traces; \label{ite:4}

\item For $\bxi\in C^\infty(D;\R^3)$ and $\zeta\in
C^\infty(\Gamma)$ satisfying
$\bn\times\bxi|_{\Gamma}=\bn\times\nabla_\Gamma\zeta$ in the sense of
traces \label{ite:5}
\begin{align}\label{eq:wssymm2}
\dual{\bH_t}{\bxi}_{D_T}
-\dual{\dtn\lambda_t}{\zeta}_{\Gamma_T}
+
\sigma^{-1}\mu_0^{-1}\dual{\nabla\times\bH}{\nabla\times\bxi}_{D_T}
&=-\dual{\bm_t}{\bxi}_{D_T};
\end{align}
\end{subequations}
\item For almost all $t\in[0,T]$
\label{ite:6}
\begin{gather}
 \norm{\nabla \bm(t)}{\Ltwo{D}}^2 
+
 \norm{\bH(t)}{\Hcurl{D}}^2
+
\norm{\lambda(t)}{H^{1/2}(\Gamma)}^2
\nonumber
\\
+ 
 \norm{\bm_t}{\Ltwo{D_t}}^2
+
 \norm{\bH_t}{\Ltwo{D_t}}^2
+
 \norm{\lambda_t}{H^{1/2}(\Gamma_t)}^2 \leq C,
\label{eq:energybound2}
 \end{gather}
where the constant $C>0$ is independent of $t$.
\end{enumerate}

\comment{
A triple $(\bm,\bH,\lambda)$ is called a strong solution of the ELLG
system~\cref{eq:strong}--\cref{eq:con} if it is a weak solution and
additionally it is sufficiently smooth such that~\cref{eq:llg2} is satisfied in the strong sense.}
\end{definition}

\revision{
\begin{remark}
A refinement of the arguments in Theorem~\ref{thm:weakconv} would allow us to prove that the weak solutions which appear as limits of
the approximations from Algorithm~\ref{algorithm}, are energy dissipative, i.e.,
\begin{align*}
 \frac{C_e}{2}\norm{\nabla\bm(t)}{\Ltwo{D}}^2 &+\norm{\bH(t)}{\Ltwo{D}}^2 -\dual{\dtn\lambda(t)}{\lambda(t)}_\Gamma\\
 &\leq
 \frac{C_e}{2}\norm{\nabla\bm^0}{\Ltwo{D}}^2 +\norm{\bH^0}{\Ltwo{D}}^2 -\dual{\dtn\lambda(0)}{\lambda(0)}_\Gamma
\end{align*}
for all $t\in[0,T]$. The proof works along the lines of~\cite[Theorem~24]{Abert_etal} or~\cite[Appendix~A]{bsu} and is therefore omitted.
\end{remark}
}

The reason we integrate over~$[0,T]$ in~\cref{eq:H lam t} to
have~\cref{eq:wssymm2} is to facilitate the passing to the
limit in the proof of the main theorem.
The following lemma justifies the above definition.
\begin{lemma}\label{lem:equidef}
Let $(\bm,\bH,\bE)$ be a strong solution 
of~\cref{eq:strong}--\cref{eq:con}. If $\varphi\in
H(0,T;H^1(D^\ast))$
satisfies $\nabla\varphi=\bH|_{D^\ast_T}$, and if
$\lambda:=\gamma^+\varphi$, then the triple
$(\bm,\bH|_{D_T},\lambda)$
is a weak solution in the sense of \Cref{def:fembemllg}. 
Conversely, given a weak solution $(\bm,\bH|_{D_T},\lambda)$ in the sense of \Cref{def:fembemllg}, let~$\varphi$ be the solution of
\begin{equation}\label{eq:var phi}
\Delta\varphi = 0
\text{ in } D^\ast,
\quad
\varphi = \lambda
\text{ on } \Gamma,
\quad
\varphi(x) = O(|x|^{-1})
\text{ as } |x|\to\infty
\end{equation}
and define $\bH|_{D_T^\star}:=\nabla \phi$
as well as~$\bE$ via
$\bE=\sigma^{-1}(\nabla\times\bH|_{D_T})$ in $D_T$ and outside of $D_T$ as the solution of
\begin{subequations}\label{eq:el}
\begin{alignat}{2}
\nabla\times\bE &= -\mu_0 \bH_t
&&\quad\text{in } D_T^\ast,
\\
{\rm div}(\bE) &=0
&&\quad\text{in }D_T^\ast,
\\
\bn\times \bE|_{D_T^\ast} &= \bn\times \bE|_{D_T}
&&\quad\text{on }\Gamma_T.
\end{alignat}
\end{subequations}
If $\bm$, $\bH$, and $\bE$ are sufficiently smooth, $(\bm,\bH,\bE)$ is a strong solution of~\cref{eq:strong}--\cref{eq:con}.
 \end{lemma}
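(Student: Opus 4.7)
The statement is an equivalence, so the strategy is to treat the two directions separately, each being mostly a verification exercise using the manipulations already carried out in the informal derivation preceding \Cref{def:fembemllg}. The key conceptual ingredient in both directions is the relation between the scalar potential $\varphi$ in $D^\ast$ and its Dirichlet trace $\lambda=\gamma^+\varphi$, together with the identity $\bn\times\nabla_\Gamma\lambda=\bn\times\bH|_\Gamma$ (recalled from~\cite[Section~3.4]{Monk03}) that links the exterior representation of $\bH$ to the interior one.

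For the forward direction, given a strong solution $(\bm,\bH,\bE)$, I would define $\varphi$ via $\nabla\varphi=\bH|_{D_T^\ast}$ with $\varphi(x)=O(|x|^{-1})$, set $\lambda:=\gamma^+\varphi$, and then simply check items~\ref{ite:1}--\ref{ite:6} of \Cref{def:fembemllg}. Items~\ref{ite:1} and~\ref{ite:2} are immediate from~\cref{eq:con2,eq:con3}. For item~\ref{ite:3} I would test~\cref{eq:llg2} by $\bm\times\bphi$; the term $(\bm\cdot\bH_{\rm eff})\bm$ drops out because $\bm\perp\bm\times\bphi$, and integrating $C_e\Delta\bm$ by parts and invoking~\cref{eq:con1} yields~\cref{eq:wssymm1}. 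Item~\ref{ite:4} follows from the formula $\nabla_\Gamma\lambda=\bH|_\Gamma-(\partial_n^+\varphi)\bn$ recalled in the paper. Item~\ref{ite:5} is exactly the derivation already written between~\cref{eq:MLLG2} and~\cref{eq:H lam t}: test~\cref{eq:MLLG2} by $\bxi$, use~\cref{eq:MLLG1} to eliminate $\bE$ in $D$, use $\bxi=\nabla\zeta$ and $\bH=\nabla\varphi$ in $D^\ast$, integrate by parts using $\Delta\varphi_t=0$, and finally replace $\partial_n^+\varphi_t$ by $\dtn\lambda_t$. Integrating in time gives~\cref{eq:wssymm2}. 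Item~\ref{ite:6} is a standard energy estimate, obtained by testing the LLG with $\bm_t$ and the eddy-current equation with $\bH$ and summing.

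For the reverse direction, given a weak solution $(\bm,\bH|_{D_T},\lambda)$, I would first use item~\ref{ite:4} and the surface-gradient identity to conclude that the extension $\bH|_{D_T^\ast}:=\nabla\varphi$, with $\varphi$ solving~\cref{eq:var phi}, has matching tangential trace on $\Gamma$, so $\bH\in\Hcurl{\R^3}$ for a.e.~$t$. The solvability of~\cref{eq:el} in $D_T^\ast$ is ensured by $\dive\bH_t=\Delta\varphi_t=0$ in $D^\ast$, which gives the required compatibility of $-\mu_0\bH_t$. Equation~\cref{eq:MLLG1} holds by the definition of $\bE$ in $D$ and because $\bH=\nabla\varphi$ makes $\nabla\times\bH=0$ in $D^\ast$, where also $\sigma=0$. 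Equation~\cref{eq:MLLG2} holds in $D^\ast$ by construction; to obtain it in $D$, I would restrict~\cref{eq:wssymm2} to test functions $\bxi$ compactly supported in $D$ (so $\zeta=0$), integrate by parts the curl-curl term, and rewrite $\sigma^{-1}\nabla\times(\nabla\times\bH)=\nabla\times\bE$ using the definition of $\bE$ in $D$. To recover~\cref{eq:MLLG3}, I would first test~\cref{eq:wssymm2} with $\bxi=\nabla\Phi$ compactly supported in $D$ and $\zeta=0$ to obtain $\dive(\bH+\bm)_t=0$ in $D$; then allow general $\bxi=\nabla\Phi$ with $\zeta=\Phi|_\Gamma$, integrate by parts, and use the identity $\dtn\lambda_t=\partial_n^+\varphi_t=\bH|_{D^\ast,t}\cdot\bn$ to conclude that the normal-trace jump of $\bH+\widetilde\bm$ across $\Gamma$ is time-independent; the initial condition~\cref{eq:ini} makes it vanish. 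Finally, assuming sufficient smoothness, inverting the derivation of item~\ref{ite:3} recovers~\cref{eq:llg2}, which is equivalent to~\cref{eq:llg} because $|\bm|=1$.

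The main obstacle is the careful bookkeeping in the reverse direction, in particular recovering the distributional divergence equation~\cref{eq:MLLG3} across the interface $\Gamma$: neither $\bH$ alone nor $\widetilde\bm$ alone is divergence-free across $\Gamma$, so the matching of the normal-trace jumps must be extracted from the boundary term produced by the Dirichlet-to-Neumann operator in~\cref{eq:wssymm2} via a judicious choice of gradient-type test functions. All other steps are essentially a rereading of the derivation preceding the definition, so no additional machinery beyond standard trace theory in $\Hcurl{D}$ and $H^{1/2}(\Gamma)$, density of smooth test functions, and the mapping properties of $\dtn$ from~\cite{mclean} is required.
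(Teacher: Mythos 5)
Your plan is correct, and for the forward direction it coincides with the paper's proof: the paper likewise verifies items~\ref{ite:1}, \ref{ite:2}, \ref{ite:6} directly from~\cref{eq:con} and~\cref{eq:ini}, and items~\ref{ite:3}--\ref{ite:5} by pointing to the derivation carried out between~\cref{eq:MLLG2} and~\cref{eq:H lam t}.

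For the converse direction your route is genuinely different in emphasis. The paper's proof is a one-sentence appeal to the well-posedness of~\cref{eq:el} as recorded in~\cite[Eq.~(15)]{dual}, whereas you reconstruct the argument from scratch: you define the exterior extension via~\cref{eq:var phi}, use item~\ref{ite:4} to confirm matching tangential traces (so $\bH\in\Hcurl{\R^3}$), recover~\cref{eq:MLLG1}--\cref{eq:MLLG2} by restricting~\cref{eq:wssymm2} to compactly supported test functions, and — this is the nontrivial step the paper does not spell out — recover the distributional divergence equation~\cref{eq:MLLG3} by testing with gradient fields $(\nabla\Phi,\Phi|_\Gamma)\in\XX$, integrating by parts, and interpreting the boundary term involving $\dtn\lambda_t=\partial_n^+\varphi_t$ as the normal-trace jump of $(\bH+\widetilde\bm)_t$ across $\Gamma$, after which~\cref{eq:ini} removes the time-independent constant. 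This self-contained argument buys transparency and removes the external dependency on~\cite{dual}; the paper's version buys brevity at the cost of that dependency. Both are valid, and your treatment of the normal-trace jump in~\cref{eq:MLLG3} is exactly the point one would want to see made explicit.
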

\begin{proof}
We follow~\cite{dual}.
Assume that $(\bm,\bH,\bE)$
satisfies~\cref{eq:strong}--\cref{eq:con}.
Then,~\cref{ite:1}, \cref{ite:2} and~\cref{ite:6} in
\Cref{def:fembemllg} hold, noting~\cref{eq:ini}. \Cref{ite:3}, \cref{ite:4}
and~\cref{ite:5} also hold due to the analysis above
\Cref{def:fembemllg}.
The converse is also true due to the
well-posedness of~\cref{eq:el} as stated
in~\cite[Equation~(15)]{dual}. 
\end{proof}
\begin{remark}
The solution~$\varphi$ to~\cref{eq:var phi} can be represented
as
$
\varphi
=
(1/2+\dlp)\lambda - \slp\dtn\lambda.
$
\end{remark}

The next subsection defines the spaces and functions to be used
in the approximation of the weak solution
the sense of \Cref{def:fembemllg}.
\subsection{Discrete spaces and functions}\label{subsec:dis spa}
For time discretization, we use a uniform partition $0\leq t_i\leq
T$, $i=0,\ldots,N$ with $t_i:=ik$ and $k:=T/N$. The spatial
discretization is determined by a (shape) regular triangulation
$\TT_h$ of $D$ into compact tetrahedra \comment{$\tau\in\TT_h$} with diameter
\comment{$h_{\tau}/C\leq h\leq C|\tau|^{1/3}$}  for some uniform constant $C>0$.
Denoting by~$\NN_h$ the set of nodes of~$\TT_h$, 
we define the following spaces
\begin{align*}
 \SS^1(\TT_h)&:=\set{\phi_h\in C(D)}{\phi_h|\comment{\tau} \in
\PP^1(\comment{\tau})\text{ for
all } \comment{\tau}\in\TT_h},\\
 \KK_{\bphi_h}&:=\set{\bpsi_h\in\SS^1(\TT_h)^3}{\bpsi_h(z)\cdot\bphi_h(z)=0\text{
for all }z\in\NN_h},
\quad\bphi_h\in\SS^1(\TT_h)^3,
\end{align*}
where $\PP^1(\comment{\tau})$ is the space of polynomials of degree at most~1
on~$\comment{\tau}$.

For the discretization of~\cref{eq:wssymm2}, we employ the
space $\NN\DD^1(\TT_h)$ of
first order N\'ed\'elec (edge) elements for $\bH$ and 
and the space $\SS^1(\TT_h|_\Gamma)$ for $\lambda$.
Here $\TT_h|_{\Gamma}$ denotes the restriction of the
triangulation to the boundary~$\Gamma$.
It follows from~\cref{ite:4} in
\Cref{def:fembemllg} that for each~$t\in[0,T]$, the
pair~$(\bH(t),\lambda(t))\in\XX$. We approximate the space~$\XX$ by
\begin{align*}
\XX_h:=\set{(\bxi,\zeta)\in \NN\DD^1(\TT_h)\times
\SS^1(\TT_h|_\Gamma)}{\bn\times \nabla_\Gamma\zeta =
\bn\times\bxi|_\Gamma}.
\end{align*}
To ensure the condition $\bn\times \nabla_\Gamma\zeta = \bn\times
\bxi|_{\Gamma}$, we observe the following.
For any~$\zeta\in \SS^1(\TT_h|_\Gamma)$, if $e$ denotes an edge of
$\TT_h$
on $\Gamma$, then $\int_e \bxi\cdot \btau\,ds = 
\int_e \nabla \zeta \cdot \btau\,ds = \zeta(z_0)-\zeta(z_1)$, where
$\btau$ is the unit direction vector on~$e$, and
$z_0,z_1$ are the endpoints of $e$.
Thus, taking as degrees of freedom all interior edges of $\TT_h$
(i.e. $\int_{e_i}\bxi\cdot\btau \,ds$) as well as all nodes of
$\TT_h|_\Gamma$ (i.e.~$\zeta(z_i)$), we fully determine
a function pair $(\bxi,\zeta)\in\XX_h $.
Due to the considerations above, it is clear that the above
space can be implemented directly without use of Lagrange
multipliers or other extra equations. 

The density properties of the finite element
spaces~$\{\XX_h\}_{h>0}$
are shown in Subsection~\ref{subsec:som lem}; see 
\Cref{lem:den pro}.

Given functions 
$\bw_h^i\colon D\to \R^d$, $d\in\N$,
for all $i=0,\ldots,N$ we define for all $t\in [t_i,t_{i+1}]$
\begin{align*}
\bw_{hk}(t):=\frac{t_{i+1}-t}{k}\bw_h^i + \frac{t-t_i}{k}\bw_h^{i+1},
\quad
\bw_{hk}^-(t):=\bw_h^i,
\quad
\bw_{hk}^+(t):=\bw_h^{i+1} .
\end{align*}
Moreover, we define
\begin{equation}\label{eq:dt}
d_t\bw_h^{i+1}:=\frac{\bw_h^{i+1}-\bw_h^i}{k}\quad\text{for all }
i=0, \ldots, N-1.
\end{equation}

Finally, we denote by $\Pi_{\SS}$ the usual interpolation operator on
$\SS^1(\TT_h)$.
We are now ready to present the algorithm to compute approximate
solutions to problem~\cref{eq:strong}--\cref{eq:con}.

\subsection{Numerical algorithm}\label{section:alg}
In the sequel, when there is no confusion we use the same
notation $\bH$ for 
the restriction of $\bH\colon \R^3_T\to \R^3$ to the domain $D_T$.
\begin{algorithm}\label{algorithm}
\mbox{}

 \textbf{Input:} Initial data $\bm^0_h\in\SS^1(\TT_h)^3$, 
$(\bH^0_h,\lambda_h^0)\in\XX_h$, 
and parameter $\theta\in[0,1]$.

 \textbf{For} $i=0,\ldots,N-1$ \textbf{do:}
 \begin{enumerate}
  \item Compute the unique function $\bv^i_h\in \KK_{\bm_h^i}$
satisfying for all $\bphi_h\in \KK_{\bm_h^i}$
  \begin{align}\label{eq:dllg}
  \begin{split}
   \alpha\dual{\bv_h^i}{\bphi_h}_D 
&+ 
\dual{\bm_h^i\times \bv_h^i}{\bphi_h}_D
+ 
C_e\theta k \dual{\nabla \bv_h^i}{\nabla \bphi_h}_D
\\
&=
-C_e \dual{\nabla \bm_h^i}{\nabla \bphi_h}_D 
+
\dual{\bH_h^i}{\bphi_h}_D.
  \end{split}
  \end{align}
  \item Define $\bm_h^{i+1}\in \SS^1(\TT_h)^3$ nodewise by 
\begin{equation}\label{eq:mhip1}
\bm_h^{i+1}(z) =\bm_h^i(z) + k\bv_h^i(z) \quad\text{for all } z\in\NN_h.
\end{equation}
\item Compute the unique functions
$(\bH_h^{i+1},\lambda_h^{i+1})\in\XX_h$
 satisfying for all
$(\bxi_h,\zeta_h)\in \XX_h$
\begin{align}\label{eq:dsymm}
\dual{d_t\bH_h^{i+1}}{\bxi_h}_{D}
&
-\dual{d_t\dtn_h\lambda_h^{i+1}}{\zeta_h}_\Gamma
+
\sigma^{-1}\mu_0^{-1}\dual{\nabla\times\bH_h^{i+1}}{\nabla\times\bxi_h}_{D}
\nonumber
\\
&=
-\dual{\bv_h^i}{\bxi_h}_{D},
\end{align}
where $\dtn_h\colon H^{1/2}(\Gamma)\to \SS^1(\TT_h|_\Gamma)$ 
is the discrete Dirichlet-to-Neumann operator to be defined
later.
\end{enumerate}

\textbf{Output:} Approximations $(\bm_h^i,\bH_h^i,\lambda_h^i)$
for all $i=0,\ldots,N$.
\end{algorithm}

The linear formula~\cref{eq:mhip1} was introduced
in~\cite{bartels} for harmonic map heat flow and adapted for LLG in~\cite{Abert_etal}. As already observed in~\cite{ellg,MLLG} (for bounded domains), we note that the linear systems~\eqref{eq:dllg} and~\eqref{eq:dsymm} 
 are decoupled and
can be solved successively.
\Cref{eq:dsymm} requires the computation
of~$\dtn_h\lambda$ for any~$\lambda\in H^{1/2}(\Gamma)$.
This is done by use of
the boundary element method. 
Let~$\mu\in H^{-1/2}(\Gamma)$ 
and~$\mu_h\in\PP^0(\TT_h|_\Gamma)$
be, respectively, the solution of
\begin{align}\label{eq:bem}
\slp\mu= (\dlp-1/2)\lambda
\quad\text{and}\quad
\dual{\slp \mu_h}{\nu_h}_\Gamma =
\dual{(\dlp-1/2)\lambda}{\nu_h}_\Gamma\quad\forall\nu_h\in
 \PP^0(\TT_h|_\Gamma),
\end{align}
where $\PP^0(\TT_h|_\Gamma)$ is the space of 
piecewise-constant functions on~$\TT_h|_\Gamma$.

If the representation~\cref{eq:dtn} of~$\dtn$ is used, 
then~$\dtn\lambda=\mu$, and
we can uniquely define~$\dtn_h\lambda$
by solving
\begin{equation}\label{eq:bem1}
\dual{\dtn_h\lambda}{\zeta_h}_\Gamma
=
\dual{\mu_h}{\zeta_h}_\Gamma
\quad\forall\zeta_h\in\SS^1(\TT_h|_\Gamma).
\end{equation}
This is known as the Johnson-N\'ed\'elec coupling. 

If we use the representation~\cref{eq:dtn2} for~$\dtn\lambda$
then $\dtn\lambda = (1/2-\dlp^\prime)\mu-\hyp\lambda$.
In this case we can uniquely define~$\dtn_h\lambda$ by solving
\begin{align}\label{eq:bem2}
\dual{\dtn_h\lambda}{\zeta_h}_\Gamma 
= 
\dual{(1/2-\dlp^\prime)\mu_h}{\zeta_h}_\Gamma 
-
\dual{\hyp \lambda}{\zeta_h}_\Gamma
\quad\forall\zeta_h\in \SS^1(\TT_h|_\Gamma).
\end{align}
This approach yields an (almost) symmetric system and is called
\revision{symmetric coupling}.

In practice,~\cref{eq:dsymm} only requires the computation
of~$\dual{\dtn_h\lambda_h}{\zeta_h}_\Gamma$ for
any~$\lambda_h, \zeta_h\in\SS^1(\TT_h|_\Gamma)$. 
So in the implementation,
neither~\cref{eq:bem1} nor~\cref{eq:bem2} has to be solved. 
It suffices to solve the second equation in~\cref{eq:bem}
and compute the right-hand side of either~\cref{eq:bem1}
or~\cref{eq:bem2}.

It is proved in~\cite[Appendix~A]{afembem} that symmetric coupling
results in a discrete operator which is uniformly elliptic and
continuous:
\begin{align}\label{eq:dtnelliptic}
\begin{split}
-\dual{\dtn_h \zeta_h}{\zeta_h}_\Gamma&\geq
C_\dtn^{-1}\norm{\zeta_h}{H^{1/2}(\Gamma)}^2\quad\text{for all
}\zeta_h\in \SS^1(\TT_h|_\Gamma),\\
\norm{\dtn_h\zeta}{H^{-1/2}(\Gamma)}^2&\leq C_\dtn
\norm{\zeta}{H^{1/2}(\Gamma)}^2\quad\text{for all }\zeta\in H^{1/2}(\Gamma),
\end{split}
\end{align}
for some constant $C_\dtn>0$ which depends only on $\Gamma$. 
Even though \revision{we are convinced that the proposed algorithm  works for both
approaches, we are not aware of the essential ellipticity result of the 
form~\cref{eq:dtnelliptic} for the Johnson-N\'ed\'elec
approach. Thus, \comment{hereafter}, $\dtn_h$ is understood to be defined
by the symmetric coupling~\cref{eq:bem2}.}

\subsection{Main results}
Before stating the main results, 
we first state some general
assumptions. Firstly, the weak convergence of approximate solutions
requires the following
conditions on $h$ and $k$, depending on the value of the parameter~$\theta$
in~\cref{eq:dllg}:
\begin{equation}\label{eq:hk 12}
\begin{cases}
k = o(h^2) \quad & \text{when } 0 \le \theta < 1/2, \\
k = o(h)   \quad & \text{when } \theta = 1/2, \\
\text{no condition} & \text{when } 1/2 < \theta \le 1.
\end{cases}
\end{equation}
Some supporting lemmas which have their own interests do not require any
condition when $\theta=1/2$. For those results, a slightly different
condition is required, namely
 \begin{equation}\label{eq:hk con}
\begin{cases}
k = o(h^2) \quad & \text{when } 0 \le \theta < 1/2, \\
\text{no condition} & \text{when } 1/2 \le \theta \le 1.
\end{cases}
\end{equation}
The initial data are assumed to satisfy
\begin{equation}\label{eq:mh0 Hh0}
\begin{split}
\sup_{h>0}
\left(
\norm{\bm_h^0}{H^1(D)}
+
\norm{\bH_h^0}{\Hcurl{D}}
+
\norm{\lambda_h^0}{H^{1/2}(\Gamma)}
\right)
&<\infty,
\\
\lim_{h\to0}
\norm{\bm_h^0-\bm^0}{\Ltwo{D}}
&= 0.
\end{split}
\end{equation}

The following three theorems state the main results of this paper.
The first theorem proves existence of weak solutions. The second theorem establishes a 
priori error estimates for the pure LLG case of~\cref{eq:llg},
i.e., $\bH_{\rm eff}=C_e\Delta\bm$ and there is no coupling with
the eddy current equations~\cref{eq:MLLG1}--\cref{eq:MLLG3}. 
The third theorem
provides a priori error estimates for the ELLG system.
\begin{theorem}[Existence of solutions]\label{thm:weakconv}
Under the assumptions~\cref{eq:hk 12} and~\cref{eq:mh0
Hh0}, the problem~\cref{eq:strong}--\cref{eq:con} has a 
solution~$(\bm,\bH,\lambda)$ in the sense of
\Cref{def:fembemllg}.
\end{theorem}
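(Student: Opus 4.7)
The plan is to follow the standard compactness strategy for discretizations of nonlinear parabolic systems: derive uniform a priori bounds on the iterates produced by \Cref{algorithm}, extract weakly convergent subsequences of the time-interpolants defined in \Cref{subsec:dis spa}, upgrade to strong convergence where nonlinearities demand it, and pass to the limit in the two lines of the scheme to recover the weak formulation of \Cref{def:fembemllg}.

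First I would prove the central discrete energy estimate. Testing~\cref{eq:dllg} with $\bphi_h = \bv_h^i$ (which is admissible since $\bv_h^i\in\KK_{\bm_h^i}$) gives
\[
\alpha\|\bv_h^i\|_{\Ltwo{D}}^2 + C_e\theta k\|\nabla\bv_h^i\|_{\Ltwo{D}}^2 = -C_e\dual{\nabla\bm_h^i}{\nabla\bv_h^i}_D + \dual{\bH_h^i}{\bv_h^i}_D,
\]
and expanding $\|\nabla\bm_h^{i+1}\|^2 = \|\nabla\bm_h^i\|^2 + 2k\dual{\nabla\bm_h^i}{\nabla\bv_h^i}_D + k^2\|\nabla\bv_h^i\|^2$ via~\cref{eq:mhip1} yields a telescoping inequality for $\|\nabla\bm_h^i\|^2$ with a defect $(1/2-\theta)C_e k^2\|\nabla\bv_h^i\|^2$. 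This defect is absorbed unconditionally for $\theta\geq 1/2$; otherwise, an inverse estimate bounds it by $(k/h^2)\cdot k\|\bv_h^i\|^2$, which vanishes under~\cref{eq:hk 12}. Testing~\cref{eq:dsymm} with $(\bH_h^{i+1},\lambda_h^{i+1})\in\XX_h$ and invoking the ellipticity~\cref{eq:dtnelliptic} of $\dtn_h$ (which is the reason for insisting on the symmetric coupling~\cref{eq:bem2}) produces the eddy-current energy identity; the cross term $\dual{\bv_h^i}{\bH_h^{i+1}}_D$ is absorbed via Young's inequality against the LLG bound. Summing over $i$ then yields $h$- and $k$-uniform control of all norms appearing in~\cref{eq:energybound2}, together with summability of $\|\bv_h^i\|_{H^1(D)}^2$, $\|d_t\bH_h^{i+1}\|_{\Ltwo{D}}^2$ and $\|d_t\lambda_h^{i+1}\|_{H^{1/2}(\Gamma)}^2$.

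Next I would extract weakly convergent subsequences $\bm_{hk}\rightharpoonup\bm$ in $H^1(0,T;\Ltwo{D})\cap L^\infty(0,T;\Hone{D})$, $\bH_{hk},\bH_{hk}^\pm\rightharpoonup\bH$ in $L^2(0,T;\Hcurl{D})$ with $\partial_t\bH_{hk}\rightharpoonup\bH_t$ in $\Ltwo{D_T}$, and $\lambda_{hk}\rightharpoonup\lambda$ in $H^1(0,T;H^{1/2}(\Gamma))$. The Aubin-Lions lemma upgrades $\bm_{hk}$ to strong convergence in $L^2(0,T;\Ltwo{D})$ and, along a subsequence, pointwise a.e.; this is indispensable for identifying the limits of the nonlinear terms in~\cref{eq:dllg}. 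The constraint $|\bm|=1$ a.e.\ is recovered from the nodewise identity
\[
|\bm_h^{i+1}(z)|^2 = |\bm_h^0(z)|^2 + k^2\sum_{j=0}^{i}|\bv_h^j(z)|^2 = 1 + k^2\sum_{j=0}^{i}|\bv_h^j(z)|^2,
\]
combined with the summability estimate $k\sum_j\|\bv_h^j\|^2\leq C$ and $k\to 0$. For the limit in~\cref{eq:dllg}, I would use $\bphi_h(t):=\Pi_{\SS}(\bm_{hk}^-(t)\times\bphi(t))\in\KK_{\bm_{hk}^-(t)}$ as test function for any $\bphi\in C^\infty(D_T;\R^3)$; the strong convergence of $\bm_{hk}^-$ together with $\bv_{hk}^-\rightharpoonup\bm_t$ (identified through~\cref{eq:mhip1}, the Bartels device) passes to~\cref{eq:wssymm1}. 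For~\cref{eq:dsymm}, the density of $\XX_h$ in $\XX$ (\Cref{lem:den pro}) supplies admissible test functions, the inclusion $\XX_h\subset\XX$ directly preserves~\cref{ite:4} in the limit, and consistency of the symmetric coupling via~\cref{eq:bem2} together with convergence of $\slp$, $\dlp$, $\hyp$ on the discrete spaces yields $\dual{d_t\dtn_h\lambda_{hk}^+}{\zeta_h}_{\Gamma_T}\to\dual{\dtn\lambda_t}{\zeta}_{\Gamma_T}$. The initial conditions are inherited from~\cref{eq:mh0 Hh0} via weak continuity at $t=0$.

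The principal obstacle is the combined energy estimate of the decoupled system under the minimal CFL condition~\cref{eq:hk 12}: one must simultaneously absorb the defect produced by the non-normalized nodal update~\cref{eq:mhip1} and the cross term between the LLG and eddy current subsystems, while maintaining sharpness at $\theta=1/2$ where only $k=o(h)$ is available. A secondary technical point is the passage to the limit in the boundary term $\dual{d_t\dtn_h\lambda_h^{i+1}}{\zeta_h}_\Gamma$, which requires that the symmetric discretization of $\dtn$ be both uniformly stable (\cref{eq:dtnelliptic}) and consistent on test functions inherited from the density statement for $\XX_h$.
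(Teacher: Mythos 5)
Your proposal is correct and follows essentially the same route as the paper: the discrete energy bound obtained by testing \cref{eq:dllg} with $\bv_h^i$ and \cref{eq:dsymm} with the new iterate (using the ellipticity \cref{eq:dtnelliptic} of the symmetric coupling), extraction of weak limits of the interpolants, strong convergence of $\bm_{hk}^-$ for the nonlinear terms, the nodewise identity $|\bm_h^{i+1}(z)|^2=|\bm_h^i(z)|^2+k^2|\bv_h^i(z)|^2$ for the constraint, the test functions $\Pi_\SS(\bm_{hk}^-\times\bphi)$ and the density of $\XX_h$ in $\XX$ for the limit passage, and a consistency/weak-convergence argument for $\dtn_h$. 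One small overstatement: the energy estimate yields only $k\sum_i\norm{\bv_h^i}{\Ltwo{D}}^2\le C$ and the $k^2$-weighted bound on $\sum_i\norm{\nabla\bv_h^i}{\Ltwo{D}}^2$ (for $\theta>1/2$), not summability of $\norm{\bv_h^i}{\H^1(D)}^2$, but only the weighted bounds are actually needed in your limit passage.
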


\begin{theorem}[Error estimates for LLG]
\label{thm:strongconvLLG}
Let 
\[
\bm \in 
W^{2,\infty}\big(0,T;\Hone{D}\big)
\cap 
W^{1,\infty}\big(0,T; \W^{1,\infty}(D)\cap \H^2(D)\big)
\]
denote a strong solution of~\cref{eq:llg}
and~\cref{eq:con1}--\cref{eq:con3}
with $\bH_{\rm eff}=C_e\Delta\bm$.
Then for $\theta>1/2$ (where~$\theta$ is the parameter in~\cref{eq:dllg})
and for all $h,k$ satisfying $0<h,k\leq 1$ and 
$k\le\alpha/(2C_e)$,
the following statements hold
\begin{align}\label{eq:strongconv}
\begin{split}
 \max_{0\leq i\leq N}&\norm{\bm(t_i)-\bm_h^{i}}{\Hone{D}}\leq C_{\rm
conv}\big( \norm{\bm^0-\bm^0_h}{\Hone{D}}+h+k\big)
\end{split}
\end{align}
and
\begin{align}\label{eq:strongconv2}
\norm{\bm-\bm_{hk}}{L^2(0,T;\H^1(D))}
\leq 
C_{\rm conv}\big(
\norm{\bm^0-\bm^0_h}{\Hone{D}}+h+k\big).
\end{align}
The constant $C_{\rm conv}>0$
depends only on the regularity of $\bm$, the shape regularity
of $\TT_h$, and the values of $\alpha$ and $\theta$.
Moreover, the strong solution $\bm$ is unique and coincides with
the weak solution from~\cref{thm:weakconv}.
\end{theorem}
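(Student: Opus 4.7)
\medskip
\noindent\emph{Proof proposal.} The plan is to derive the maximum-in-time discrete bound~\eqref{eq:strongconv} via a discrete Gronwall argument on a nodal error recursion, and then obtain~\eqref{eq:strongconv2} by triangle inequality against the piecewise-affine reconstruction $\bm_{hk}$. Throughout, the discrete velocity $\bv_h^i$ is compared with the forward difference $\bv(t_i):=(\bm(t_{i+1})-\bm(t_i))/k$, so that with $\be_h^i:=\bm(t_i)-\bm_h^i$ and $\bE_h^i:=\bv(t_i)-\bv_h^i$ one has exactly $\be_h^{i+1}=\be_h^i+k\bE_h^i$ at every node of $\TT_h$.

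First, I would derive a consistency relation by inserting $\bv(t_i)$ into the left-hand side of~\eqref{eq:dllg} and comparing with the strong form of~\eqref{eq:llg2} at $t_i$: because $\bm(t_i)\cdot\bm_t(t_i)=0$ and $|\bm|=1$, for any $\bphi$ that is pointwise orthogonal to $\bm(t_i)$ one has
\[
\alpha\dual{\bm_t(t_i)}{\bphi}_D + \dual{\bm(t_i)\times\bm_t(t_i)}{\bphi}_D + C_e\dual{\nabla\bm(t_i)}{\nabla\bphi}_D = 0.
\]
Subtracting~\eqref{eq:dllg} tested against an admissible $\bphi_h\in\KK_{\bm_h^i}$, built by projecting $\Pi_\SS\bphi$ nodewise onto the orthogonal complement of $\bm_h^i$, yields an error equation for $\bE_h^i$ whose right-hand side, by standard interpolation estimates, the Taylor bound $\bv(t_i)-\bm_t(t_i)=\mathcal{O}(k)$, and the perturbation term $C_e\theta k\dual{\nabla\bv(t_i)}{\nabla\bphi_h}_D$, is controlled by $\mathcal{O}(h+k)\,\|\bphi_h\|_{\Hone{D}}$.

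Second, I would test the error equation with the nodal tangent-plane projection of $\bE_h^i$ onto $\KK_{\bm_h^i}$. The skew-symmetric term $\dual{\bm_h^i\times\bE_h^i}{\bE_h^i}_D$ vanishes, leaving the coercive contribution $\alpha\|\bE_h^i\|_{\Ltwo{D}}^2 + C_e\theta k\|\nabla\bE_h^i\|_{\Ltwo{D}}^2$ on the left. The crucial identity then comes from expanding $\be_h^{i+1}=\be_h^i+k\bE_h^i$ in the $\Hone{D}$-norm,
\[
\|\be_h^{i+1}\|_{\Hone{D}}^2 = \|\be_h^i\|_{\Hone{D}}^2 + 2k\dual{\be_h^i}{\bE_h^i}_{\Hone{D}} + k^2\|\bE_h^i\|_{\Hone{D}}^2.
\]
Replacing the gradient cross term through the error equation produces a negative contribution $-2\theta k^2\|\nabla\bE_h^i\|_{\Ltwo{D}}^2$, which combined with the leftover $k^2\|\nabla\bE_h^i\|_{\Ltwo{D}}^2$ leaves the sign-indefinite piece $(1-2\theta)k^2\|\nabla\bE_h^i\|_{\Ltwo{D}}^2$. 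This is \emph{non-positive} precisely when $\theta>1/2$ and may be discarded; the $\Ltwo{D}$ piece $k^2\|\bE_h^i\|_{\Ltwo{D}}^2$ is absorbed into $\alpha k\|\bE_h^i\|_{\Ltwo{D}}^2$ using $k\le\alpha/(2C_e)$.

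After absorption, the recursion reduces to $\|\be_h^{i+1}\|_{\Hone{D}}^2 \le (1+Ck)\|\be_h^i\|_{\Hone{D}}^2 + Ck(h+k)^2$, and discrete Gronwall delivers~\eqref{eq:strongconv}; the bound~\eqref{eq:strongconv2} follows by triangle inequality against $\bm_{hk}$, combining the nodal estimate with an $L^\infty(0,T)$ control on $\bm_t$ from the regularity hypothesis. Uniqueness of the strong solution is automatic: any two strong solutions must coincide with the unique limit of $\bm_{hk}$ whose existence is guaranteed by \Cref{thm:weakconv}. The hardest part will be the bookkeeping in the second step, where one must control the mismatch between $\bE_h^i$ and its nodal projection onto $\KK_{\bm_h^i}$. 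This requires a uniform $\mathcal{O}(k)$ bound on the nodal defect $|\bm_h^i(z)|^2-1$, obtained from the identity $|\bm_h^{i+1}(z)|^2=|\bm_h^i(z)|^2+k^2|\bv_h^i(z)|^2$ (which uses the nodal orthogonality $\bv_h^i(z)\cdot\bm_h^i(z)=0$ built into $\KK_{\bm_h^i}$), together with an $L^\infty$ bound on $\bv_h^i$ coming from an inverse estimate applied to the coercivity bound. This defect estimate and the Gronwall bound must be closed via a simultaneous induction in $i$.
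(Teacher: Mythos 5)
Your high-level architecture matches the paper's: a nodal error recursion in the $\Hone{D}$-norm, a consistency equation obtained by inserting the exact solution into the discrete scheme, a tangent-plane test function, the observation that the $(1-2\theta)k^2\|\nabla\cdot\|^2$ term has a good sign for $\theta>1/2$, and a discrete Gronwall argument closed by $k\le\alpha/(2C_e)$. Your use of the forward difference $\bv(t_i)=(\bm(t_{i+1})-\bm(t_i))/k$ instead of $\bm_t(t_i)$ is a harmless variant (the paper pays a Taylor remainder instead). The gap is precisely where you flag it: the approximation of the exact velocity in $\KK_{\bm_h^i}$.

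Your proposed fix — controlling the nodal defect $|\bm_h^i(z)|^2-1$ via the identity $|\bm_h^{i+1}(z)|^2=|\bm_h^i(z)|^2+k^2|\bv_h^i(z)|^2$ and an inverse estimate — does not close. Iterating the identity gives
$|\bm_h^j(z)|^2-|\bm_h^0(z)|^2 = k^2\sum_{i=0}^{j-1}|\bv_h^i(z)|^2$,
and the best available bound on a single $|\bv_h^i(z)|$ comes through $|\bv_h^i(z)|^2\lesssim h^{-3}\|\bv_h^i\|^2_{\Ltwo{D}}$. The energy estimate~\eqref{eq:denergy} only controls $k\sum_i\|\bv_h^i\|_{\Ltwo{D}}^2$, so the accumulated defect is $\mathcal{O}(k h^{-3})$; even inserting the target estimate $\|\bv_h^i\|_{\Ltwo{D}}\lesssim1$ by simultaneous induction gives $k^2 N h^{-3}=\mathcal{O}(k h^{-3})$. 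That only vanishes under $k=o(h^3)$, a coupling that \Cref{thm:strongconvLLG} explicitly does \emph{not} assume (only $\theta>1/2$, $h,k\le 1$, $k\le\alpha/(2C_e)$). So an $\mathcal{O}(k)$ bound on the defect is not available, and the argument as written fails.

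The paper's \Cref{lem:orthoproj} avoids the nodal defect entirely. It constructs a competitor $\bw\in\KK_{\bm_h^i}$ by applying the solution-dependent map $R_z^i\ba:=-\ba\times\big(\bm(t_i,z)\times\bm_t(t_i,z)\big)$ to the discrete value, $\bw(z):=R_z^i\bm_h^i(z)$. Tangency $\bw(z)\perp\bm_h^i(z)$ holds by construction, and since $|\bm(t_i,z)|=1$ and $\bm_t(t_i,z)\perp\bm(t_i,z)$ one has $\Pi_\SS\bm_t(t_i,z)=R_z^i\bm(t_i,z)$, hence $\Pi_\SS\bm_t(t_i,z)-\bw(z)=R_z^i\big(\bm(t_i,z)-\bm_h^i(z)\big)$. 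Combined with the linear bound $|R_z^i\ba|\le|\bm_t(t_i,z)|\,|\ba|$ and the regularity of $\bm$, this yields $\|\bm_t(t_i)-\P_h^i\bm_t(t_i)\|_{\Hone{D}}\lesssim h+\|\bm(t_i)-\bm_h^i\|_{\Hone{D}}$ with no $h$--$k$ coupling. The quantity $|\bm_h^i(z)|^2-1$ never enters: the approximation error in the tangent space is governed directly by the nodal error in $\bm$, not by how far $\bm_h^i$ has drifted from the unit sphere. You would need to replace your defect argument with this (or an equivalent) construction for the proof to go through.

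A smaller point: your consistency relation is stated for test functions pointwise orthogonal to $\bm(t_i)$, but the discrete test space $\KK_{\bm_h^i}$ imposes only nodewise orthogonality to $\bm_h^i$, and the resulting residual $C_e\dual{|\nabla\bm(t_i)|^2\bm(t_i)}{\bphi_h}_D$ must be shown small. The paper does this in \Cref{lem:help} via a scaling/norm-equivalence argument exploiting that $\bm_h^i\cdot\bphi_h$ is a quadratic on each element vanishing at all its nodes; this, too, does not require any defect estimate.
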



\begin{theorem}[Error estimates for for ELLG]\label{thm:strongconvELLG}
Let $(\bm,\bH,\lambda)$ be a strong solution of ELLG \comment{(in the sense
of~\Cref{def:fembemllg})}
with the following properties
\begin{align*}
\bm &\in W^{2,\infty}\big(0,T;\Hone{D}\big)\cap W^{1,\infty}\big(0,T;
\W^{1,\infty}(D)\cap \H^2(D)\big), \\
\bH&\in W^{2,\infty}(0,T;\H^2(D))\cap {\mathbb L}^\infty(D_T), \\
\lambda &\in W^{1,\infty}(0,T; H^1(\Gamma))\text{ such
that }\dtn\lambda_t\in L^\infty(0,T; H^{1/2}_{\comment{\rm pw}}(\Gamma)),
\end{align*}
where $H^{1/2}_{\comment{\rm pw}}(\Gamma)$ is defined piecewise on each
smooth part of $\Gamma$.
Then for $\theta>1/2$ and for all $h$, $k$ satisfying
$0<h,k\leq 1/2$ and $k\le\alpha/(2C_e)$,
there hold
\begin{align}\label{eq:convest1}
\begin{split}
\max_{0\leq i \leq N}&
\Big(\norm{\bm(t_i)-\bm_h^{i}}{\Hone{D}}^2+\norm{\bH(t_i)-\bH^i_h}{\Ltwo{D}}^2\\
&\qquad+\norm{\lambda(t_i)-\lambda_h^i}{H^{1/2}(\Gamma)}^2
+k\norm{\nabla\times(\bH(t_i)-\bH^i_h)}{\Ltwo{D}}^2\Big)
\\
&\leq C_{\rm conv}\Big(\norm{\bm^0-\bm_h^0}{\Hone{D}}^2+\norm{\lambda^0-\lambda_h^0}{H^{1/2}(\Gamma)}^2
\\
&\qquad\qquad+k\norm{\nabla\times(\bH^0-\bH^0_h)}{\Ltwo{D}}^2
+h^2+k^2\Big)
\end{split}
\end{align}
and
\begin{align}\label{eq:convest2}
\begin{split}
\norm{\bm-\bm_{hk}}{L^2(0,T;\H^1(D))}^2
&+
\norm{(\bH-\bH_{hk},\lambda-\lambda_{hk})}{L^2(0,T;\XX)}^2
\\
&\leq C_{\rm conv}
\Big(\norm{\bm^0-\bm_h^0}{\Hone{D}}^2+\norm{\bH^0-\bH^0_h}{\Ltwo{D}}^2
\\
&\qquad\qquad+k\norm{(\bH^0-\bH^0_h,\lambda^0-\lambda^0_h)}{\XX}^2
+h^2+k^2\Big).
\end{split}
\end{align}
Particularly, the strong solution $(\bm,\bH,\lambda)$ is unique and coincides with the weak solution from \Cref{thm:weakconv}.
The constant $C_{\rm conv}>0$ depends only on the smoothness of $\bm$, $\bH$, $\lambda$, and on the shape regularity of $\TT_h$.
\end{theorem}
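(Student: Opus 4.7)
The plan is to build on the strategy used for \Cref{thm:strongconvLLG} and couple it with a Galerkin-type error analysis for the FEM/BEM-discretized eddy current subsystem. Define the nodal errors $\be_m^i := \bm(t_i)-\bm_h^i$, $\be_H^i := \bH(t_i)-\bH_h^i$, and $\be_\lambda^i := \lambda(t_i)-\lambda_h^i$, together with their piecewise-affine interpolants $\be_{m,hk}$, $\be_{H,hk}$, $\be_{\lambda,hk}$. First I would write discrete consistency equations by inserting the strong solution into~\cref{eq:dllg} and~\cref{eq:dsymm}: because $(\bm,\bH,\lambda)$ is smooth and satisfies~\cref{eq:wssymm1}--\cref{eq:wssymm2} strongly, Taylor expansion in time yields residuals that are $O(k)$ in general and $O(k^2)$ when $\theta$ corresponds to the Crank--Nicolson choice, while the N\'ed\'elec, nodal $\SS^1$, and scalar-trace interpolants yield $O(h)$ spatial consistency. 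A Ritz-type projection $(\widehat\bH_h^i,\widehat\lambda_h^i)\in\XX_h$ of $(\bH(t_i),\lambda(t_i))$ with respect to the $\curl$--$\curl$ plus $\dtn_h$ bilinear form from~\cref{eq:dsymm} would reduce the analysis to estimating the discrete component of the error and would allow direct use of~\cref{eq:dtnelliptic}.

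Next I would test the eddy current error equation with $(\bxi_h,\zeta_h)$ equal to the discrete component of $(\be_H^{i+1},\be_\lambda^{i+1})$ in $\XX_h$. The DtN ellipticity~\cref{eq:dtnelliptic} and the $\curl$-term yield, after summation in $i$ and telescoping, control of $\max_i\norm{\be_H^i}{\Ltwo{D}}^2 + \max_i\norm{\be_\lambda^i}{H^{1/2}(\Gamma)}^2 + k\sum_i\norm{\nabla\times\be_H^{i+1}}{\Ltwo{D}}^2$. The right-hand side $\dual{\bv_h^i-\bm_t(t_i)}{\bxi_h}_D$ is what couples this estimate to the LLG error: since $\bv_h^i$ is produced by~\cref{eq:dllg}, testing~\cref{eq:dllg} with an interpolation of $\bm_t(t_i)$ projected onto $\KK_{\bm_h^i}$ and subtracting the strong form of~\cref{eq:llg2} shows that $\bv_h^i-\bm_t(t_i)$ is bounded in $\Ltwo{D}$ by $C\bigl(\norm{\be_m^i}{\Hone{D}}+\norm{\be_H^i}{\Ltwo{D}}+h+k\bigr)$, up to terms involving the not-identically-unit length of $\bm_h^i$ which are higher order.

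Simultaneously, I would revisit the LLG argument from \Cref{thm:strongconvLLG}, keeping track of the perturbation caused by $\bH_h^i$ in place of $\bH(t_i)$ on the right-hand side of~\cref{eq:dllg}. Testing with a discrete projection of $\bm_h^i\times(\bm(t_i)-\bm_h^i)$ onto $\KK_{\bm_h^i}$ and exploiting $\theta>1/2$ produces a strictly positive $(\theta-1/2)k\norm{\nabla\bv_h^i}{\Ltwo{D}}^2$ term that absorbs the problematic cross terms coming from the nonlinearity. The fact that the nodal update~\cref{eq:mhip1} does not preserve $|\bm_h^{i+1}|=1$ exactly is controlled, as in~\cite{bartels,Abert_etal}, by the identity $|\bm_h^{i+1}(z)|^2 = |\bm_h^i(z)|^2 + k^2|\bv_h^i(z)|^2$ at each node, contributing only higher-order defects. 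The outcome is a one-step inequality of the shape $\norm{\be_m^{i+1}}{\Hone{D}}^2 - \norm{\be_m^i}{\Hone{D}}^2 \lesssim k\bigl(\norm{\be_m^i}{\Hone{D}}^2 + \norm{\be_H^i}{\Ltwo{D}}^2\bigr) + k(h^2+k^2)$.

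Finally, I would add the two one-step inequalities, absorb the small coupling cross-terms using $k\le\alpha/(2C_e)$ together with the ellipticity constant of $\dtn_h$, and apply a discrete Gr\"onwall lemma to obtain~\cref{eq:convest1}. Estimate~\cref{eq:convest2} then follows by passing to the piecewise-affine interpolants and using standard identities between nodal and time-continuous norms. Uniqueness of the strong solution and its coincidence with the weak limit from \Cref{thm:weakconv} follow by letting $h,k\to0$ along the approximating sequence. The main obstacle I expect is calibrating the coupling terms at the right regularity level so that the bound on $\bv_h^i-\bm_t(t_i)$ produced by the LLG step is precisely what the eddy current energy argument can absorb, and vice versa, without losing a factor of $h^{-1}$ or $k^{-1}$; the interplay between the hypothesis $\theta>1/2$, the CFL-type restriction $k\le\alpha/(2C_e)$, the $\dtn_h$ ellipticity~\cref{eq:dtnelliptic}, and the assumed $\L^\infty(D_T)$ bound on $\bH$ is what makes this delicate balance possible.
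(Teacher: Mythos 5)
Your high-level plan is broadly the one the paper follows: two coupled one-step error recurrences (one for the LLG part in $\Hone{D}$, one for the eddy current part in the $h$-norm), with the coupling channel being $\bv_h^i-\bm_t(t_i)$, and a discrete Gr\"onwall lemma to close. Your use of the DtN ellipticity~\cref{eq:dtnelliptic}, the conditions $\theta>1/2$ and $k\le\alpha/(2C_e)$, and the $\L^\infty(D_T)$ bound on $\bH$ are all in the right places. The choice of a Ritz-type projection for the eddy current part is a legitimate alternative to the paper's use of the nodal/N\'ed\'elec interpolation operators plus a DtN consistency bound~\cref{eq:dtnh}; the Ritz projection would trade that extra consistency lemma for a discrete-coercivity argument and would likely work.

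The gap is precisely at the spot you yourself flag as the ``main obstacle.'' You claim that subtracting the strong equation from~\cref{eq:dllg} yields a \emph{plain} bound
$\norm{\bv_h^i-\bm_t(t_i)}{\Ltwo{D}}\lesssim \norm{\be_m^i}{\Hone{D}}+\norm{\be_H^i}{\Ltwo{D}}+h+k$. That estimate is not attainable at this level of regularity. Testing the difference equation with $\bphi_h=\P_h^i\bm_t(t_i)-\bv_h^i$ produces, on the left-hand side, the $\Ltwo{D}$-coercive term, a $\theta k$-weighted gradient term, and the sign-indefinite cross term $\dual{\nabla(\bm(t_i)-\bm_h^i)}{\nabla(\bm_t(t_i)-\bv_h^i)}_D$. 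Absorbing that cross term by Young's inequality into the $\theta k$-weighted gradient term costs a factor $k^{-1}$ on $\norm{\nabla\be_m^i}{\Ltwo{D}}^2$, which destroys the Gr\"onwall argument. The paper avoids this by \emph{not} isolating an $L^2$ bound on $\bv_h^i-\bm_t(t_i)$: it first forms the combined recurrence
$\norm{\be_m^{i+1}}{\Hone{D}}^2+\beta\norm{\bE_{i+1}}{h}^2+\ldots$, in which the problematic gradient inner product appears with exactly the right coefficient and the right sign (coming from expanding $\norm{\bm(t_i)+k\bm_t(t_i)-\bm_h^i-k\bv_h^i}{\Hone{D}}^2$), and only then applies the LLG consistency estimate (the analogue of~\cref{eq:mt vhi}, Lemma~\ref{lemma:cea0}) to the \emph{sum} including that inner product. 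This reordering of ``combine first, estimate later'' is the whole point of the delicate calibration and is not a cosmetic matter: your plan, as stated, would lose a factor of $k^{-1}$ on the $\Hone$ error. A secondary, smaller issue is the proposed LLG test function $\bm_h^i\times(\bm(t_i)-\bm_h^i)$; this is not the one that yields $\Ltwo{D}$-coercivity in the error variable $\bm_t(t_i)-\bv_h^i$, and the paper instead tests with the projected time derivative $\P_h^i\bm_t(t_i)-\bv_h^i$.
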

\begin{remark}
 It is possible to replace the assumption $\bm(t)\in \W^{1,\infty}(D)$ in
Theorems~\ref{thm:strongconvLLG}--\ref{thm:strongconvELLG} by
$\nabla\bm(t)\in \L^4(D)$ and $\bm(t)\in \L^\infty(D)$.
 This, however, results in a reduced rate of convergence $\sqrt{k}$ instead
of $k$; see Remark~\ref{rem:reg} for further discussion.
\end{remark}

\begin{remark}
Little is known about the regularity of the solutions of~\cref{eq:strong}
in 3D; see~\cite{CimExist}. However, for the 2D case of $D$ being the flat torus
$\R^2/\Z^2$, Theorem~5.2 in~\cite{prohl2001} states the
existence of arbitrarily smooth local solutions of the full MLLG system,
given that $\norm{\nabla\bm^0}{L^2(D)}$ and the initial
values of the Maxwell system are sufficiently small.
Since the eddy-current equations are a particular simplification of
Maxwell's equations, this strongly endorses the assumption
that also for the
3D case of the ELLG equations there exist arbitrarily smooth local
solutions.
\end{remark}

\section{Proof of the main results}\label{sec:pro}
\subsection{Some lemmas}\label{subsec:som lem}
In this subsection we prove all important lemmas which are directly related
to the proofs of the main results.
The first lemma proves density properties of the discrete spaces.
\begin{lemma}\label{lem:den pro}
Provided that the meshes $\{\TT_h\}_{h>0}$ are regular, 
the union~$\bigcup_{h>0}\XX_h$ is dense in~$\XX$.
There exists an interpolation operator
$\Pi_\XX:=(\Pi_{\XX,D},\Pi_{\XX,\Gamma})\colon 
\big(\H^2(D)\times H^2(\Gamma)\big)
\cap \XX\to \XX_h$ which satisfies
\begin{align}\label{eq:int}
\norm{(1-\Pi_\XX)(\bxi,\zeta)}{\Hcurl{D}\times H^{1/2}(\Gamma) }
&\leq C_{\XX} h( \norm{\bxi}{\H^2(D)}+ h^{1/2}\norm{\zeta}{H^2(\Gamma)}),
\end{align}
where $C_\XX>0$ depends only on $D$, $\Gamma$, and the shape
regularity of $\TT_h$.
\end{lemma}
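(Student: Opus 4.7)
The statement splits naturally into two parts: (i)~construction of an interpolation operator $\Pi_\XX$ with the quantitative bound~\cref{eq:int}, and (ii)~density of $\bigcup_{h>0}\XX_h$ in $\XX$. I would obtain (ii) as a consequence of (i) combined with density of smooth pairs in $\XX$, so the bulk of the work is in defining $\Pi_\XX$ correctly.

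First I would set $\Pi_\XX(\bxi,\zeta):=(I^{\NN\DD}\bxi,\IGamma\zeta)$, where $I^{\NN\DD}$ is the canonical lowest-order N\'ed\'elec edge interpolant and $\IGamma$ is nodal (Lagrange) interpolation on $\TT_h|_\Gamma$. The crucial verification is that this pair actually belongs to $\XX_h$, i.e.\ that $\bn\times I^{\NN\DD}\bxi|_\Gamma=\bn\times\nabla_\Gamma\IGamma\zeta$. This is exactly the edge-DOF relation already noted by the authors: for any boundary edge $e\subset\Gamma$ with endpoints $z_0,z_1$,
\begin{equation*}
\int_e I^{\NN\DD}\bxi\cdot\btau\,ds=\int_e\bxi\cdot\btau\,ds=\int_e\nabla_\Gamma\zeta\cdot\btau\,ds=\zeta(z_1)-\zeta(z_0),
\end{equation*}
where the second equality uses the compatibility defining $\XX$. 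Since the tangential DOFs of a N\'ed\'elec function on $\Gamma$ and the nodal DOFs of $\IGamma\zeta$ on $\Gamma$ uniquely determine each other (as the authors remarked when introducing $\XX_h$), the pair lies in $\XX_h$. The $\H^2$-regularity assumption guarantees $I^{\NN\DD}\bxi$ and $\IGamma\zeta$ are well defined.

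For the error estimate, I would invoke standard Bramble--Hilbert/scaling arguments on each element: $\|\bxi-I^{\NN\DD}\bxi\|_{\Hcurl{D}}\leq C_1 h\,\|\bxi\|_{\H^2(D)}$ (using that $\curl\bxi\in\H^1(D)$ is automatic for $\bxi\in\H^2(D)$), and for the surface interpolation, $\|\zeta-\IGamma\zeta\|_{L^2(\Gamma)}\lesssim h^2|\zeta|_{H^2(\Gamma)}$ together with $|\zeta-\IGamma\zeta|_{H^1(\Gamma)}\lesssim h|\zeta|_{H^2(\Gamma)}$, then the interpolation estimate between $L^2$ and $H^1$ to obtain $\|\zeta-\IGamma\zeta\|_{H^{1/2}(\Gamma)}\leq C_2 h^{3/2}|\zeta|_{H^2(\Gamma)}$. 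Adding these yields~\cref{eq:int}.

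For density, I would reduce the problem to approximating $(\bxi,\zeta)\in\XX$ by smooth pairs inside $\XX$, then feed the smooth pairs into $\Pi_\XX$ and extract a diagonal subsequence using~\cref{eq:int}. To produce smooth approximants, fix a continuous lift $E\colon H^{1/2}(\Gamma)\to H^1(D)$ and decompose
\begin{equation*}
\bxi=\bw+\nabla(E\zeta),\qquad \bw:=\bxi-\nabla(E\zeta),
\end{equation*}
observing that $\bn\times\bw|_\Gamma=0$ by the $\XX$-compatibility, so $\bw\in\H_0(\curl,D)$. Standard density yields $\bw_n\in C_c^\infty(D)^3$ with $\bw_n\to\bw$ in $\Hcurl{D}$, and $\zeta_n\in C^\infty(\Gamma)$ with $\zeta_n\to\zeta$ in $H^{1/2}(\Gamma)$. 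Then $\bxi_n:=\bw_n+\nabla(E\zeta_n)$ satisfies $\bn\times\bxi_n|_\Gamma=\bn\times\nabla_\Gamma\zeta_n$ and converges to $\bxi$ in $\Hcurl{D}$ (the gradient part absorbs its curl). After a further mollification on $\Gamma$ to secure $H^2$ regularity so that $\Pi_\XX$ applies, the diagonal argument completes the proof.

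The main obstacle is the simultaneous smoothing of $\bxi$ and $\zeta$ while preserving the tangential-trace coupling. This is precisely what the decomposition $\bxi=\bw+\nabla(E\zeta)$ accomplishes: it transfers the coupling into the gradient part (where it is exact for any smooth $\zeta_n$) and leaves a genuinely unconstrained $\H_0(\curl,D)$-piece that can be mollified by standard means. The rest is bookkeeping of FE interpolation estimates.
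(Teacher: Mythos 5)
Your construction of $\Pi_\XX$ and the derivation of~\cref{eq:int} are essentially identical to the paper's: the paper also takes the N\'ed\'elec interpolant for the interior edge DOFs, the nodal interpolant $\Pi_\SS$ for the boundary node DOFs, observes that the $\XX$-compatibility of $(\bxi,\zeta)$ forces the boundary edge DOFs of $\Pi_{\NN\DD}\bxi$ to coincide with those determined by $\Pi_\SS\zeta$, and then splits the error into the two standard interpolation estimates. So that part is fine.

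For the density claim, the paper simply asserts that $\big(\H^2(D)\times H^2(\Gamma)\big)\cap\XX$ is dense in $\XX$ and stops; you actually try to prove it, which is good, and the decomposition $\bxi=\bw+\nabla(E\zeta)$ with $\bw\in\H_0(\curl,D)$ is a sound device for carrying the tangential-trace constraint through the approximation. But there is a gap in the last step. Even if $\zeta_n\in C^\infty(\Gamma)$, the function $\nabla(E\zeta_n)$ is only $\L^2(D)$ for a generic continuous lift $E\colon H^{1/2}(\Gamma)\to H^1(D)$; to apply $\Pi_\XX$ you need $\bxi_n\in\H^2(D)$, i.e. $E\zeta_n\in H^3(D)$, and a Lipschitz/polyhedral domain like the unit cube does not give this kind of elliptic lift even for smooth boundary data (corner and edge singularities of the harmonic extension are the obstruction). ``A further mollification on $\Gamma$'' does not repair this, because the loss of regularity happens in the interior extension, not on $\Gamma$. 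To close the gap you would need either (a) a lift operator tailored to raise interior regularity (e.g. harmonic extension followed by cutoff and interior mollification, with careful control of the tangential trace near $\Gamma$), or (b) a lower-regularity interpolation operator for which the edge DOFs are still defined on the approximants you actually produce, or (c) simply cite a density result of this form directly. As written the step is hand-waved, whereas everything leading up to it is correct and matches the paper.
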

\begin{proof}
The interpolation operator
$\Pi_\XX:=(\Pi_{\XX,D},\Pi_{\XX,\Gamma})\colon 
\big(\H^2(D)\times H^2(\Gamma)\big)
\cap \XX\to \XX_h$ is constructed as follows.
The interior degrees of freedom (edges) of $\Pi_\XX(\bxi,\zeta)$
are equal to the interior degrees of freedom of
$\Pi_{\NN\DD}\bxi\in\NN\DD^1(\TT_h)$, where $\Pi_{\NN\DD}$ is
the usual interpolation operator onto $\NN\DD^1(\TT_h)$.
The degrees of freedom of $\Pi_\XX(\bxi,\zeta)$ which lie on
$\Gamma$ (nodes) are equal to $\Pi_\SS\zeta$.
By the definition of $\XX_h$, this fully determines $\Pi_\XX$.
Particularly, since $\bn\times
\bxi|_\Gamma=\bn\times\nabla_\Gamma\zeta$, there holds
$\Pi_{\NN\DD}\bxi|_\Gamma = \Pi_{\XX,\Gamma}(\bxi,\zeta)$.
Hence, the interpolation error can be bounded by
\begin{align*}
\norm{(1-\Pi_\XX)(\bxi,\zeta)}{\Hcurl{D}\times H^{1/2}(\Gamma) }&\leq \norm{(1-\Pi_{\NN\DD})\bxi}{\Hcurl{D}}+\norm{(1-\Pi_\SS)\zeta}{H^{1/2}(\Gamma)}\\
&\lesssim h( \norm{\bxi}{\H^2(D)}+ h^{1/2}\norm{\zeta}{H^2(\Gamma)}).
\end{align*}
Since $\big(\H^2(D)\times H^2(\Gamma)\big)\cap \XX$ is dense in
$\XX$, this concludes the proof.
\end{proof}

The following lemma gives an equivalent form
to~\cref{eq:wssymm2} and shows that 
Algorithm~\ref{algorithm} is well-defined.
\begin{lemma}\label{lem:bil for}
Let $a(\cdot,\cdot)\colon \XX\times\XX\to \R$, $a_h(\cdot,\cdot)\colon \XX_h\times\XX_h\to \R$, and
$b(\cdot,\cdot)\colon \Hcurl{D}\times\Hcurl{D}\to \R$ be
bilinear forms defined by
\begin{align*}
a(A,B)&:=\dual{\bpsi}{\bxi}_D -\dual{\dtn \eta}{\zeta}_\Gamma,
\\
a_h(A_h,B_h)
&:=
\dual{\bpsi_h}{\bxi_h}_D
-
\dual{\dtn_h\eta_h}{\zeta_h}_{\Gamma},
\\
b(\bpsi,\bxi)&:=
\sigma^{-1}\mu_0^{-1}
\dual{\nabla\times \bpsi}{\nabla\times \bxi}_{\revision{D}},
\end{align*}
for all $\bpsi, \bxi\in\Hcurl{D}$, $A:=(\bpsi,\eta)$, $B:=(\bxi,\zeta)\in\XX$, $A_h = (\bpsi_h,\eta_h), B_h = (\bxi_h,\zeta_h) \in \XX_h$.
Then 
\begin{enumerate}
\item
The bilinear forms satisfy,
for all 
$A=(\bpsi,\eta)\in\XX$ and
$A_h=(\bpsi_h,\eta_h)\in\XX_h$,
\begin{align}\label{eq:elliptic}
\begin{split}
a(A,A)&\geq C_{\rm ell} 
\big(
\norm{\bpsi}{\Ltwo{D}}^2+\norm{\eta}{H^{1/2}(\Gamma)}^2
\big),
\\
a_h(A_h,A_h)
&\geq 
C_{\rm ell}
\big(
\norm{\bpsi_h}{\Ltwo{D}}^2+\norm{\eta_h}{H^{1/2}(\Gamma)}^2
\big),
\\
b(\bpsi,\bpsi)
&\geq C_{\rm ell}\norm{\nabla\times
\bpsi}{\Ltwo{D}}^2.
\end{split}
\end{align}
\item
\Cref{eq:wssymm2} is equivalent to
\begin{equation}\label{eq:bil for}
\int_0^T
a(A_t(t),B) \, dt
+
\int_0^T
b(\bH(t),\bxi) \, dt
=
-
\dual{\bm_t}{\bxi}_{D_T}
\end{equation}
for all~$B=(\bxi,\zeta)\in \XX$, where~$A=(\bH,\lambda)$.
\item
\Cref{eq:dsymm} is of the form
\begin{align}\label{eq:eddygeneral}
a_h(d_t A_h^{i+1},B_h) + b(\bH_h^{i+1},\bxi_h)
=
-\dual{\bv_h^i}{\bxi_h}_\Gamma
\end{align}
where $A_h^{i+1}:=(\bH_h^{i+1},\lambda_h^{i+1})$ and
$B_h:=(\bxi_h,\zeta_h)$.
\item
Algorithm~\ref{algorithm} is well-defined in the sense
that~\cref{eq:dllg} and~\cref{eq:dsymm} have unique solutions.

\item 
The norm
\begin{align}\label{eq:hnorm}
 \norm{B_h}{h}^2 := \norm{\bxi_h}{\Ltwo{D}}^2 
- 
\dual{\dtn_h \zeta_h}{\zeta_h}_\Gamma
\quad\forall B_h=(\bxi_h,\zeta_h)\in\XX_h,
\end{align}
is equivalent to the graph norm of $\Ltwo{D}\times
H^{1/2}(\Gamma)$ uniformly in $h$.
\end{enumerate}
\end{lemma}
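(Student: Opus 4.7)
The plan is to dispatch the five assertions in turn. The only non-trivial ingredient beyond~\cref{eq:dtnelliptic} is the classical $H^{1/2}(\Gamma)$-ellipticity of the continuous Steklov--Poincar\'e operator $-\dtn$; the remaining parts reduce to the definitions of $a$, $a_h$, $b$, to \Cref{lem:den pro}, and to finite-dimensional linear algebra. For part~(i), the bound on $b$ is immediate from its definition, and the bound on $a_h$ is the first inequality of~\cref{eq:dtnelliptic} combined with the trivial $\norm{\bpsi_h}{\Ltwo{D}}^2$-contribution. For the continuous form $a$, I would invoke that $-\dtn$ is the Steklov--Poincar\'e operator associated to the exterior Laplace problem with radiation condition: for the harmonic extension $\varphi$ of $\eta$ to $D^\ast$ decaying like $O(|x|^{-1})$, Green's identity in $D^\ast$ yields $-\dual{\dtn\eta}{\eta}_\Gamma = \norm{\nabla\varphi}{\Ltwo{D^\ast}}^2$, and (using that $D^\ast$ is simply connected) exterior Dirichlet regularity gives $\norm{\nabla\varphi}{\Ltwo{D^\ast}} \gtrsim \norm{\eta}{H^{1/2}(\Gamma)}$.

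For parts~(ii) and~(iii) I would simply substitute the definitions of $a$ and $b$ and unroll. Writing $A=(\bH,\lambda)$ and $B=(\bxi,\zeta)$, the integrand $a(A_t(t),B) = \dual{\bH_t}{\bxi}_D - \dual{\dtn\lambda_t}{\zeta}_\Gamma$ together with $b(\bH(t),\bxi)$ reproduces the left-hand side of~\cref{eq:wssymm2} after integration in $t$; hence~\cref{eq:bil for} holds verbatim on smooth compatible test pairs $(\bxi,\zeta)$, and the extension to all of $\XX$ is the density assertion of \Cref{lem:den pro}. Part~(iii) is a direct rewriting of~\cref{eq:dsymm}.

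Part~(iv) is a finite-dimensional uniqueness argument: since the underlying linear maps act between spaces of equal dimension, injectivity gives solvability. For the homogeneous version of~\cref{eq:dllg}, testing against $\bphi_h=\bv_h^i\in\KK_{\bm_h^i}$ kills the skew-symmetric cross-product term because $(\bm_h^i\times\bv_h^i)\cdot\bv_h^i=0$ pointwise, so the left-hand side reduces to $\alpha\norm{\bv_h^i}{\Ltwo{D}}^2 + C_e\theta k\norm{\nabla\bv_h^i}{\Ltwo{D}}^2$, which forces $\bv_h^i=0$ because $\alpha>0$. For~\cref{eq:dsymm}, multiplying by $k$ produces the form $a_h(\cdot,\cdot) + kb(\cdot,\cdot)$ on $\XX_h$, which by part~(i) is coercive in $\norm{\cdot}{h}^2 + k\norm{\nabla\times\cdot}{\Ltwo{D}}^2$ and therefore invertible.

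The norm equivalence in part~(v) is then a two-line computation: the lower bound $\norm{B_h}{h}^2 \ge \norm{\bxi_h}{\Ltwo{D}}^2 + C_\dtn^{-1}\norm{\zeta_h}{H^{1/2}(\Gamma)}^2$ is exactly the first line of~\cref{eq:dtnelliptic}, while the upper bound follows from the second via the duality pairing $|\dual{\dtn_h\zeta_h}{\zeta_h}_\Gamma| \le \norm{\dtn_h\zeta_h}{H^{-1/2}(\Gamma)}\norm{\zeta_h}{H^{1/2}(\Gamma)} \le C_\dtn\norm{\zeta_h}{H^{1/2}(\Gamma)}^2$, with all constants independent of $h$. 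The only place where a genuine obstacle could arise is the continuous ellipticity invoked in part~(i); every other step is essentially book-keeping once~\cref{eq:dtnelliptic} and \Cref{lem:den pro} are in hand.
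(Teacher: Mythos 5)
Your proposal is correct and takes essentially the same route as the paper, which in fact gives only an extremely terse proof: it dispatches part~(iv) by noting positive definiteness and finite dimensionality (for~\cref{eq:dllg}) and ellipticity of $a_h+kb$ (for~\cref{eq:dsymm}), and part~(v) directly from~\cref{eq:dtnelliptic}, while treating parts~(i)--(iii) as immediate. The extra detail you supply — the Steklov--Poincar\'e/Green's-identity argument for the continuous ellipticity of $a$, the density step via \Cref{lem:den pro} for part~(ii), and the explicit choice $\bphi_h=\bv_h^i$ killing the skew term in part~(iv) — fills in exactly the steps the authors leave implicit, and is the expected underlying reasoning.
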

\begin{proof}
The unique solvability of~\cref{eq:dsymm} follows immediately from the continuity and ellipticity of the bilinear forms $a_h(\cdot,\cdot)$ 
and $b(\cdot,\cdot)$.

The unique solvability of~\cref{eq:dllg} follows from the positive
definiteness of the left-hand side, the linearity of the right-hand side,
and the finite space dimension.
The ellipticity~\cref{eq:dtnelliptic} shows the norm equivalence in the final statement.
\end{proof}

The following lemma establishes an energy bound for the discrete
solutions.
\begin{lemma}\label{lem:denergygen}
Under the assumptions~\cref{eq:hk con} and~\cref{eq:mh0
Hh0}, there holds
for all $k<2\alpha$ and $j=1,\ldots,N$
\begin{align}\label{eq:denergy}
\begin{split}
\sum_{i=0}^{j-1}&
\left(
\norm{\bH_h^{i+1}-\bH_h^i}{\Ltwo{D}}^2+\norm{\lambda_h^{i+1}-\lambda_h^i}{H^{1/2}(\Gamma)}^2
\right)
\\
&+
k
\sum_{i=0}^{j-1}
\norm{\nabla\times \bH_h^{i+1}}{\Ltwo{D}}^2
+ \norm{\bH_h^j}{\Hcurl{D}}^2
+ \norm{\lambda_h^{j}}{H^{1/2}(\Gamma)}^2
\\
&+
\norm{\nabla \bm_h^{j}}{\Ltwo{D}}^2
+
\max\{2\theta-1,0\}k^2
\sum_{i=0}^{j-1}
\norm{\nabla\bv_h^i}{\Ltwo{D}}^2
+
k
\sum_{i=0}^{j-1}
\norm{\bv_h^i}{\Ltwo{D}}^2
\\
&+k \sum_{i=0}^{j-1} (\norm{d_t\bH_h^{i+1}}{\Ltwo{D}}^2 +\norm{d_t\lambda_h^{i+1}}{H^{1/2}(\Gamma)}^2)\\
&+\sum_{i=0}^{j-1} 
\norm{\nabla\times (\bH^{i+1}_h-\bH^{i}_h)}{\Ltwo{D}}^2
\leq C_{\rm ener}.
\end{split}
\end{align}
\end{lemma}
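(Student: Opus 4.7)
The plan is to derive three discrete energy identities---one from the LLG update~\cref{eq:dllg} and two from the eddy-current equation in the form~\cref{eq:eddygeneral}---and then combine them, absorbing the cross terms via Young's inequality under the constraint $k<2\alpha$.

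First, I test~\cref{eq:dllg} with $\bphi_h=\bv_h^i\in\KK_{\bm_h^i}$; this is admissible and annihilates the cross-product term since $\dual{\bm_h^i\times\bv_h^i}{\bv_h^i}_D=0$ pointwise. The crucial observation is that since $\bm_h^{i+1}(z)=\bm_h^i(z)+k\bv_h^i(z)$ at all nodes and all three functions lie in $\SS^1(\TT_h)^3$, the identity $\nabla\bm_h^{i+1}=\nabla\bm_h^i+k\nabla\bv_h^i$ holds exactly. Expanding $\norm{\nabla\bm_h^{i+1}}{\Ltwo{D}}^2$ converts $\dual{\nabla\bm_h^i}{\nabla\bv_h^i}_D$ into a telescoping term plus $-\tfrac{k}{2}\norm{\nabla\bv_h^i}{\Ltwo{D}}^2$. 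Multiplying by $k$ and summing over $i=0,\dots,j-1$ yields
\begin{align*}
\alpha k\sum_{i=0}^{j-1}\norm{\bv_h^i}{\Ltwo{D}}^2 + C_e(\theta-\tfrac12)k^2\sum_{i=0}^{j-1}\norm{\nabla\bv_h^i}{\Ltwo{D}}^2 + \tfrac{C_e}{2}\norm{\nabla\bm_h^{j}}{\Ltwo{D}}^2 = \tfrac{C_e}{2}\norm{\nabla\bm_h^{0}}{\Ltwo{D}}^2 + k\sum_{i=0}^{j-1}\dual{\bH_h^i}{\bv_h^i}_D.
\end{align*}
For $\theta\ge 1/2$ the second term is nonnegative and delivers precisely the $\max\{2\theta-1,0\}k^2\sum\norm{\nabla\bv_h^i}{\Ltwo{D}}^2$ contribution of~\cref{eq:denergy}. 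For $\theta<1/2$, the inverse estimate $\norm{\nabla\bv_h^i}{\Ltwo{D}}\le C_{\rm inv}h^{-1}\norm{\bv_h^i}{\Ltwo{D}}$ together with the CFL-type condition $k=o(h^2)$ from~\cref{eq:hk con} absorbs the negative contribution into $\alpha k\sum\norm{\bv_h^i}{\Ltwo{D}}^2$.

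Second, I test~\cref{eq:eddygeneral} successively with $B_h=A_h^{i+1}$ and with $B_h=A_h^{i+1}-A_h^i=k\,d_t A_h^{i+1}$. Using symmetry of both $a_h$ and $b$ together with the discrete product rule $2a_h(u^{i+1}-u^i,u^{i+1})=a_h(u^{i+1},u^{i+1})-a_h(u^i,u^i)+a_h(u^{i+1}-u^i,u^{i+1}-u^i)$ (and its analog for $b$), multiplying by $k$ and summing produces, respectively, left-hand-side contributions $\tfrac12\norm{A_h^j}{h}^2$, $\tfrac{k^2}{2}\sum\norm{d_t A_h^{i+1}}{h}^2$ and $\sigma^{-1}\mu_0^{-1}k\sum\norm{\nabla\times\bH_h^{i+1}}{\Ltwo{D}}^2$ from the first test, and $k\sum\norm{d_t A_h^{i+1}}{h}^2$, $\tfrac{\sigma^{-1}\mu_0^{-1}}{2}\norm{\nabla\times\bH_h^j}{\Ltwo{D}}^2$ and $\tfrac{\sigma^{-1}\mu_0^{-1}}{2}\sum\norm{\nabla\times(\bH_h^{i+1}-\bH_h^i)}{\Ltwo{D}}^2$ from the second. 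The corresponding right-hand sides are $-k\sum\dual{\bv_h^i}{\bH_h^{i+1}}_D$ and $-k\sum\dual{\bv_h^i}{d_t\bH_h^{i+1}}_D$.

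Finally, I add the three identities. Writing $\bH_h^{i+1}=\bH_h^i+k\,d_t\bH_h^{i+1}$, the LLG right-hand side $k\sum\dual{\bH_h^i}{\bv_h^i}_D$ partially cancels against $-k\sum\dual{\bv_h^i}{\bH_h^{i+1}}_D$, leaving $-k^2\sum\dual{\bv_h^i}{d_t\bH_h^{i+1}}_D$; combined with $-k\sum\dual{\bv_h^i}{d_t\bH_h^{i+1}}_D$ from the second eddy-current test, all cross terms have the form $\dual{\bv_h^i}{d_t\bH_h^{i+1}}_D$. Young's inequality with judiciously chosen weights absorbs the $\bv_h^i$ parts into $\alpha k\sum\norm{\bv_h^i}{\Ltwo{D}}^2$ (using $k<2\alpha$) and the $d_t\bH_h^{i+1}$ parts into the $\norm{d_t A_h^{i+1}}{h}^2$ sums via the ellipticity~\cref{eq:elliptic} (for $k$ sufficiently small, which is harmless since the statement only requires $k<2\alpha$). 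The initial-data side is controlled by~\cref{eq:mh0 Hh0} and the ellipticity~\cref{eq:dtnelliptic}. Invoking the norm equivalence in part~5 of Lemma~\ref{lem:bil for} then unpacks $\norm{A_h^j}{h}^2$ and $\norm{d_t A_h^{i+1}}{h}^2$ into the individual $\Ltwo{D}$- and $H^{1/2}(\Gamma)$-norms required by~\cref{eq:denergy}. I expect the main obstacle to be calibrating the Young weights so that all cross terms are absorbed \emph{simultaneously}, particularly in the borderline case $\theta<1/2$ where the CFL condition already consumes part of the margin in $\alpha k\sum\norm{\bv_h^i}{\Ltwo{D}}^2$.
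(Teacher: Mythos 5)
Your skeleton is the same as the paper's: test the LLG update~\cref{eq:dllg} with $\bphi_h=\bv_h^i$, exploit the exact identity $\nabla\bm_h^{i+1}=\nabla\bm_h^i+k\nabla\bv_h^i$ to telescope, test~\cref{eq:eddygeneral} with $B_h=A_h^{i+1}$ and with $B_h=A_h^{i+1}-A_h^i$, apply Abel's summation-by-parts, and use the discrete ellipticity~\cref{eq:dtnelliptic}. The cancellation $k\dual{\bH_h^i}{\bv_h^i}_D$ against $-k\dual{\bv_h^i}{\bH_h^i}_D$ and the inverse-estimate treatment of $\theta<1/2$ also match the paper. Where you diverge — and where your plan has a genuine gap — is in adding \emph{all three} identities before applying Young's inequality. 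After summing, your combined right-hand side is $-(1+k)\sum_i\dual{\bv_h^i}{\bH_h^{i+1}-\bH_h^i}_D$, and on the left the available absorbers are $\alpha k\sum_i\norm{\bv_h^i}{\Ltwo{D}}^2$ and $(\tfrac12+\tfrac1k)\sum_i\norm{A_h^{i+1}-A_h^i}{h}^2$. A Young split with parameter $\gamma$ requires both $\tfrac{(1+k)\gamma}{2}<\alpha k$ and $\tfrac{1+k}{2\gamma}<\tfrac12+\tfrac1k$, which together force $\tfrac{(1+k)^2}{k+2}<2\alpha$. This is strictly more restrictive than the stated $k<2\alpha$ (e.g.\ for $\alpha=1/2$ it caps $k$ near $0.618$ rather than $1$, and for $\alpha\le1/4$ it fails for \emph{every} $k>0$). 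You flag this — ``the main obstacle to be calibrating the Young weights \dots simultaneously'' — but do not resolve it, and the parenthetical ``for $k$ sufficiently small, which is harmless'' is not harmless: the claim is for \emph{all} $k<2\alpha$.

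The paper avoids the budgeting conflict by arguing in two passes rather than one. First, it combines only the LLG test with the first eddy test (splitting $\bH_h^{i+1}=\bH_h^i+(\bH_h^{i+1}-\bH_h^i)$) to obtain the intermediate bound~\cref{eq:e4}; here the single cross term $-k\dual{\bv_h^i}{\bH_h^{i+1}-\bH_h^i}_D$ is split by Young with a parameter $\eps\in(k,2\alpha)$ — this is where the constraint $k<2\alpha$ enters and is tight. In particular,~\cref{eq:e4} already delivers $k\sum_i\norm{\bv_h^i}{\Ltwo{D}}^2\le C$. Then, in a \emph{second} pass, the paper tests~\cref{eq:eddygeneral} with $B_h=d_tA_h^{i+1}$ (multiplied by $2k$), bounds the new cross term by $k\norm{\bv_h^i}{\Ltwo{D}}^2+k\norm{d_t\bH_h^{i+1}}{\Ltwo{D}}^2$, and — crucially — controls the $\bv_h^i$ part by the already-proved constant from~\cref{eq:e4} rather than trying to absorb it into $\alpha k\sum_i\norm{\bv_h^i}{\Ltwo{D}}^2$ a second time; only the $d_t\bH_h^{i+1}$ part is absorbed into $2k\norm{d_tA_h^{i+1}}{h}^2$. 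This sequential use of the first bound is the missing ingredient; with it, the full estimate~\cref{eq:denergy} follows for every $k<2\alpha$.
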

\revision{\begin{proof}
Choosing $B_h=A_h^{i+1}$ in~\cref{eq:eddygeneral} and
multiplying the resulting equation by $k$ we obtain
\begin{equation}\label{eq:e1}
a_h(A_h^{i+1}-A_h^i,A_h^{i+1})
+
kb(\bH_h^{i+1},\bH_h^{i+1})
= -k\dual{\bv_h^i}{\bH_h^i}_D
 -k\dual{\bv_h^i}{\bH_h^{i+1}-\bH_h^i}_D.
 \end{equation}
 Following the lines of~\cite[Lemma~5.2]{ellg} using the definition of $a_h(\cdot,\cdot)$ and~\cref{eq:dtnelliptic}, we end up with
\begin{align*}
a_h(A_h^{i+1}-A_h^i,A_h^{i+1})
&+ 
k
b(\bH_h^{i+1},\bH_h^{i+1})
+
\frac{C_e}{2}
\left(
\norm{\nabla \bm_h^{i+1}}{\Ltwo{D}}^2
-
\norm{\nabla\bm_h^i}{\Ltwo{D}}^2
\right)
\\
&+
(\theta-1/2)k^2 C_e
\norm{\nabla\bv_h^i}{\Ltwo{D}}^2
+
(\alpha-\epsilon /2) k
\norm{\bv_h^i}{\Ltwo{D}}^2
\nonumber
\\
&\leq
\frac{k}{2\epsilon}
a_h(A_h^{i+1}-A_h^i,A_h^{i+1}-A_h^i).
\end{align*}
Summing over $i$ from $0$ to $j-1$ and 
(for the first term on the left-hand side)
applying Abel's summation by parts formula we derive as in~\cite[Lemma~5.2]{ellg}
\begin{align}\label{eq:e4}
\begin{split}
\sum_{i=0}^{j-1}&
\left(
\norm{\bH_h^{i+1}-\bH_h^i}{\Ltwo{D}}^2+\norm{\lambda_h^{i+1}-\lambda_h^i}{H^{1/2}(\Gamma)}^2
\right)
\\
&
+
k
\sum_{i=0}^{j-1}
\norm{\nabla\times \bH_h^{i+1}}{\Ltwo{D}}^2
+ \norm{\bH_h^j}{\Ltwo{D}}^2
\\
&
+ \norm{\lambda_h^{j}}{H^{1/2}(\Gamma)}^2
+
\norm{\nabla \bm_h^{j}}{\Ltwo{D}}^2
\\
&
+
(2\theta-1)k^2
\sum_{i=0}^{j-1}
\norm{\nabla\bv_h^i}{\Ltwo{D}}^2
+
k
\sum_{i=0}^{j-1}
\norm{\bv_h^i}{\Ltwo{D}}^2
\\
& \qquad\qquad
\leq 
C
\left(
\norm{\nabla \bm_h^{0}}{\Ltwo{D}}^2
+
\norm{\bH_h^0}{\Ltwo{D}}^2
+ \norm{\lambda_h^{0}}{H^{1/2}(\Gamma)}^2
\right)
\leq
C,
\end{split}
\end{align}
where in the last step we used~\cref{eq:mh0 Hh0}.
It remains to consider the last three terms on the
left-hand side of~\cref{eq:denergy}.
Again, we consider~\cref{eq:eddygeneral} and select
$B_h=d_tA_h^{i+1}$ 
to obtain after multiplication by~$2k$
\begin{align*}
 \begin{split}
 2k a_h(d_tA_h^{i+1},d_tA_h^{i+1})
&
+ 
2b(\bH_h^{i+1},\bH_h^{i+1}-\bH_h^i)
\\
&= -2k\dual{\bv_h^i}{d_t\bH_h^{i+1}}_D
\le
 k \norm{\bv_h^i}{\Ltwo{D}}^2
+
k \norm{d_t\bH_h^{i+1}}{\Ltwo{D}}^2,
\end{split}
\end{align*}
so that, noting~\cref{eq:e4} and~\cref{eq:elliptic},
\begin{align}\label{eq:e5}
\begin{split}
k \sum_{i=0}^{j-1} 
\left(
\norm{d_t\bH_h^{i+1}}{\Ltwo{D}}^2 
\right.
&+
\left.
\norm{d_t\lambda_h^{i+1}}{H^{1/2}(\Gamma)}^2
\right)
+  
2 \sum_{i=0}^{j-1} 
b(\bH_h^{i+1},\bH_h^{i+1}-\bH_h^i) 
\\
&
\lesssim
 k 
\sum_{i=0}^{j-1} 
\norm{\bv_h^i}{\Ltwo{D}}^2
\leq
C.
\end{split}
\end{align}
Using Abel's summation by parts formula as in~\cite[Lemma~5.2]{ellg} for the 
second sum on the left-hand side, and noting
the ellipticity of 
the bilinear form $b(\cdot,\cdot)$ and~\cref{eq:mh0 Hh0}, we obtain
together with~\cref{eq:e4}
\begin{align}\label{eq:d4}
\sum_{i=0}^{j-1}&
(\norm{\bH_h^{i+1}-\bH_h^i}{\Ltwo{D}}^2+\norm{\lambda_h^{i+1}-\lambda_h^i}{H^{1/2}(\Gamma)}^2)\notag
\\
&+
k
\sum_{i=0}^{j-1}
\norm{\nabla\times \bH_h^{i+1}}{\Ltwo{D}}^2
+ \norm{\bH_h^j}{\Hcurl{D}}^2
+ \norm{\lambda_h^{j}}{H^{1/2}(\Gamma)}^2
+
\norm{\nabla \bm_h^{j}}{\Ltwo{D}}^2\notag
\\
&+
(2\theta-1)k^2
\sum_{i=0}^{j-1}
\norm{\nabla\bv_h^i}{\Ltwo{D}}^2
+
k
\sum_{i=0}^{j-1}
\norm{\bv_h^i}{\Ltwo{D}}^2
\\
&+k \sum_{i=0}^{j-1} (\norm{d_t\bH_h^{i+1}}{\Ltwo{D}}^2 +\norm{d_t\lambda_h^{i+1}}{H^{1/2}(\Gamma)}^2)+\sum_{i=0}^{j-1} 
\norm{\nabla\times (\bH^{i+1}_h-\bH^{i}_h)}{\Ltwo{D}}^2
\leq 
C.\notag
\end{align}
Clearly, if $1/2\le\theta\le1$ then~\cref{eq:d4}
yields~\cref{eq:denergy}. If $0\le\theta<1/2$, we argue as in~\cite[Remark~6]{ellg} to conclude the proof.
\end{proof}}

Collecting the above results we obtain the following equations
satisfied by the discrete functions
defined from~$\bm_h^i$, $\bH_h^i$, $\lambda_h^i$, and
$\bv_h^i$.

\begin{lemma}\label{lem:mhk Hhk}
Let $\bm_{hk}^{-}$, $A_{hk}^\pm:=(\bH_{hk}^\pm,\lambda_{hk}^\pm)$, and $\bv_{hk}^{-}$ be
defined from $\bm_h^i$, $\bH_h^i$, $\lambda_h^i$, and $\bv_h^i$ as
described in Subsection~\ref{subsec:dis spa}.
Then 
\begin{subequations}\label{eq:mhk Hhk}
\begin{align}
   \alpha\dual{\bv_{hk}^-}{\bphi_{hk}}_{D_T}
&+ 
\dual{(\bm_{hk}^-\times \bv_{hk}^-)}{\bphi_{hk}}_{D_T}
+ 
C_e \theta k \dual{\nabla \bv_{hk}^-}{\nabla \bphi_{hk}}_{D_T}
\nonumber
\\
&=
-C_e \dual{\nabla \bm_{hk}^-}{\nabla \bphi_{hk}}_{D_T}
+
\dual{\bH_{hk}^-}{\bphi_{hk}}_{D_T}
\label{eq:mhk Hhk1}
\\
\intertext{and with~$\partial_t$ denoting time derivative}
\int_0^T 
a_h(\partial_t A_{hk}(t),B_h)
\,dt
&+
\int_0^T 
b(\bH_{hk}^+(t),\bxi_h)
\,dt
=
-\dual{\bv_{hk}^-}{\bxi_h}_{D_T}
\label{eq:mhk Hhk2}
\end{align}
\end{subequations}
for all $\bphi_{hk}$ and $B_h:=(\bxi_h,\zeta_h)$ satisfying
$\bphi_{hk}(t,\cdot)\in\KK_{\bm_h^i}$ for $t\in[t_i,t_{i+1})$ and
$B_h\in\XX_h$.
\end{lemma}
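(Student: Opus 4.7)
The plan is to derive both identities by integrating the discrete equations of Algorithm~\ref{algorithm} in time over each subinterval $[t_i,t_{i+1}]$ and summing, exploiting that the functions with a minus superscript and $\bH_{hk}^+$ are piecewise constant in time while $A_{hk}$ is piecewise linear (so that $\partial_t A_{hk}$ is piecewise constant and equal to $d_t A_h^{i+1}$ on $[t_i,t_{i+1}]$). Since $\bphi_{hk}(t,\cdot)\in\KK_{\bm_h^i}$ for all $t\in[t_i,t_{i+1})$, each slice $\bphi_{hk}(t,\cdot)$ is an admissible test function in~\cref{eq:dllg}; likewise, any $B_h\in\XX_h$ is admissible in~\cref{eq:dsymm} (equivalently~\cref{eq:eddygeneral}).

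For~\cref{eq:mhk Hhk1}, I would test~\cref{eq:dllg} with $\bphi_h=\bphi_{hk}(t,\cdot)$ pointwise in $t\in[t_i,t_{i+1})$, exploiting linearity in the test function. Since $\bv_h^i$, $\bm_h^i$, $\bH_h^i$ are independent of $t$ on this interval, they coincide with $\bv_{hk}^-(t)$, $\bm_{hk}^-(t)$, $\bH_{hk}^-(t)$, respectively. Integrating in $t$ over $[t_i,t_{i+1}]$ produces the equality on each subinterval, and summation over $i=0,\ldots,N-1$ gives the global integral identity~\cref{eq:mhk Hhk1}.

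For~\cref{eq:mhk Hhk2}, I would use the equivalent bilinear-form version~\cref{eq:eddygeneral} from \Cref{lem:bil for}. On $[t_i,t_{i+1}]$, the definition of the piecewise linear interpolant gives $\partial_t A_{hk}(t) = d_t A_h^{i+1}$ and $\bH_{hk}^+(t)=\bH_h^{i+1}$, while the right-hand side of~\cref{eq:eddygeneral} features $\bv_h^i=\bv_{hk}^-(t)$. Testing~\cref{eq:eddygeneral} with a fixed $B_h\in\XX_h$, multiplying by $k$ and interpreting the result as the integral over $[t_i,t_{i+1}]$, then summing over $i$ yields exactly~\cref{eq:mhk Hhk2}.

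Since the claim is essentially a book-keeping statement that repackages the per-time-step relations of Algorithm~\ref{algorithm} as time-integrated identities, no substantive obstacle is expected; the only mild subtlety is making sure that the time-dependent test function $\bphi_{hk}(t,\cdot)$ in the LLG step is handled by testing the discrete equation slicewise in $t$ (rather than by attempting to pull a single test function out of the integral), which is legitimate because~\cref{eq:dllg} holds for every admissible test function and both sides depend linearly on it.
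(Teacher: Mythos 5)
Your proof is correct and takes essentially the same approach as the paper, which simply states that the lemma is a direct consequence of~\cref{eq:dllg} and~\cref{eq:eddygeneral}. You have just spelled out the time-slicing and summation book-keeping that the authors leave implicit.
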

\begin{proof}
The lemma is a direct consequence of~\cref{eq:dllg} 
and~\cref{eq:eddygeneral}.
\end{proof}

\revision{In the following lemma, we state some auxiliary results, already proved in~\cite{Abert_etal}.
\begin{lemma}\label{lem:abert}
 There holds
 \begin{align}\label{eq:const}
\left|
\norm{\bm_h^j}{\Ltwo{D}}^2-\norm{\bm_h^0}{\Ltwo{D}}^2
\right|
&
\leq C kC_{\rm ener},
\end{align}
as well as
\begin{equation}\label{eq:mhk pm}
\norm{\bm_{hk}^\pm}{L^2(0,T;\Hone{D})}
\le T
\norm{\bm_{hk}^\pm}{L^\infty(0,T;\Hone{D})}
\leq CC_{\rm ener},
\end{equation}
where $C>0$ depends only on the shape regularity of $\TT_h$ and $T$. 
\end{lemma}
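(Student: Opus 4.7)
The plan is to exploit two facts: the nodal update formula \cref{eq:mhip1} and the nodewise orthogonality $\bv_h^i(z)\cdot\bm_h^i(z)=0$ built into the constraint $\bv_h^i\in\KK_{\bm_h^i}$.

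For \cref{eq:const}, squaring \cref{eq:mhip1} at each node $z\in\NN_h$ and using the nodewise orthogonality yields $|\bm_h^{i+1}(z)|^2 = |\bm_h^i(z)|^2 + k^2|\bv_h^i(z)|^2$. Applying the nodal $\SS^1$-interpolant $\Pi_\SS$, integrating over $D$, and telescoping over $i=0,\ldots,j-1$ gives
\[
\int_D \Pi_\SS(|\bm_h^j|^2)\,dx - \int_D \Pi_\SS(|\bm_h^0|^2)\,dx
= k^2\sum_{i=0}^{j-1}\int_D \Pi_\SS(|\bv_h^i|^2)\,dx
\lesssim k\cdot\Big(k\sum_{i=0}^{j-1}\norm{\bv_h^i}{\Ltwo{D}}^2\Big)
\lesssim k\,C_{\rm ener},
\]
where I use equivalence of the lumped and standard $L^2$-norms on $\SS^1(\TT_h)^3$ together with the energy bound \cref{eq:denergy}. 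To pass from $\int_D\Pi_\SS(|\bw|^2)\,dx$ to $\norm{\bw}{\Ltwo{D}}^2$, I invoke the standard mass-lumping quadrature error $|\int_D(|\bw|^2 - \Pi_\SS(|\bw|^2))\,dx|\lesssim h^2\,\norm{\nabla\bw}{\Ltwo{D}}^2$ for $\bw\in\SS^1(\TT_h)^3$, applied to $\bw=\bm_h^j$ and $\bw=\bm_h^0$. The resulting remainder is $O(h^2\,C_{\rm ener})$, which is absorbed under the mesh conditions \cref{eq:hk con}.

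For \cref{eq:mhk pm}, the gradient part is controlled directly by $\max_j\norm{\nabla\bm_h^j}{\Ltwo{D}}^2\le C_{\rm ener}$ from \cref{eq:denergy}. The $\Ltwo{D}$-part follows from \cref{eq:const}, which gives $\norm{\bm_h^j}{\Ltwo{D}}^2 \le \norm{\bm_h^0}{\Ltwo{D}}^2 + C\,k\,C_{\rm ener}$, uniformly bounded via \cref{eq:mh0 Hh0}. Since $\bm_{hk}^\pm$ is piecewise in time with values among $\{\bm_h^i\}$, these uniform nodal-in-time bounds yield the $L^\infty(0,T;\Hone{D})$ bound, from which the $L^2(0,T;\Hone{D})$ bound follows by $\int_0^T\le T\sup$.

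The main obstacle is the discrepancy between nodewise and $L^2$ orthogonality: a direct $L^2$ computation of $\norm{\bm_h^{i+1}}{\Ltwo{D}}^2-\norm{\bm_h^i}{\Ltwo{D}}^2$ leaves a cross term $2k(\bm_h^i,\bv_h^i)_{\Ltwo{D}}$ that does not vanish (only its lumped counterpart does). Routing through the nodal interpolant $\Pi_\SS$ and paying a mass-lumping quadrature error cleanly separates matters: the $O(k)$ contribution comes from telescoping the nodal identity, while the $O(h^2)$ remainder from the lumping error is harmless under the mesh conditions.
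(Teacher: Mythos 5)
Your core idea — telescoping the nodal identity $|\bm_h^{i+1}(z)|^2 = |\bm_h^i(z)|^2 + k^2|\bv_h^i(z)|^2$ under the lumped (nodal interpolant) integral, then converting to the standard $\Ltwo{D}$-norm via the mass-lumping error — is the natural and presumably intended route (the paper only cites~\cite[Proposition~9]{Abert_etal}). The telescoping step, the norm equivalence, and the energy bound from~\cref{eq:denergy} are all used correctly, and the derivation of~\cref{eq:mhk pm} from~\cref{eq:const}, \cref{eq:denergy}, and~\cref{eq:mh0 Hh0} matches what the paper does.

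The gap is in the last sentence of your argument. You obtain
\[
\Big|\norm{\bm_h^j}{\Ltwo{D}}^2 - \norm{\bm_h^0}{\Ltwo{D}}^2\Big|
\lesssim k\,C_{\rm ener} + h^2\,C_{\rm ener},
\]
where the $h^2$-term comes from the two mass-lumping quadrature errors $\big|\norm{\bw}{\Ltwo{D}}^2 - \int_D\Pi_\SS(|\bw|^2)\big|\lesssim h^2\norm{\nabla\bw}{\Ltwo{D}}^2$ applied to $\bw=\bm_h^j$ and $\bw=\bm_h^0$, each bounded by $h^2 C_{\rm ener}$ via~\cref{eq:denergy}. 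You then assert that this remainder ``is absorbed under the mesh conditions \cref{eq:hk con}''. It is not: those conditions read $k=o(h^2)$ for $\theta<1/2$ and impose \emph{no} relation for $\theta\ge1/2$, so in both cases $h^2$ is \emph{not} controlled by $k$ — in the first case it dominates $k$, and in the second it is unrelated. Hence the asserted absorption goes in the wrong direction, and what your argument actually proves is the weaker bound $C(k+h^2)C_{\rm ener}$, not the stated $C\,k\,C_{\rm ener}$. (The weaker bound is still sufficient for the only downstream use here — uniform boundedness of $\norm{\bm_h^i}{\Ltwo{D}}$ feeding into~\cref{eq:mhk pm} and Lemma~\ref{lem:mt vhi} — but it does not establish the lemma as stated.) To close the gap you would either need to show cancellation between the lumping errors $E(\bm_h^j)$ and $E(\bm_h^0)$ (the Hessian of $|\bm_h^j|^2 - |\bm_h^0|^2$ involves $\nabla(\bm_h^j-\bm_h^0)=k\sum_{i<j}\nabla\bv_h^i$, but the energy estimate~\cref{eq:denergy} only controls $k^2\sum_i\norm{\nabla\bv_h^i}{\Ltwo{D}}^2$ when $\theta>1/2$, which is not enough to make the difference $O(k)$ without a mesh relation), or be explicit that the proved bound carries the $h^2$ contribution.
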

\begin{proof}
 The estimate~\eqref{eq:const} follows analogously to~\cite[Proposition~9]{Abert_etal}.
The estimate~\eqref{eq:mhk pm} then follows from~\cref{eq:mh0 Hh0}, \cref{eq:denergy}, and~\cref{eq:const}.
\end{proof}
}
The next lemma shows that the functions defined in the above
lemma form sequences which have convergent subsequences.
\begin{lemma}\label{lem:weakconv}
Assume that the assumptions~\cref{eq:hk con} and~\cref{eq:mh0
Hh0} hold.
As $h$, $k\to0$, the following limits exist up to extraction of
subsequences \revision{(all limits hold for the same subsequence)}
\begin{subequations}\label{eq:weakconv}
\begin{alignat}{2}
 \bm_{hk}&\rightharpoonup\bm\quad&&\text{in }\Hone{D_T},\label{eq:wc1}\\
 \bm_{hk}^\pm&\rightharpoonup\bm\quad&&\text{in }
L^2(0,T;\Hone{D}),\label{eq:wc1a}
\\
 \bm_{hk}^\pm
& \rightarrow\bm
\quad&&
\text{in } \Ltwo{D_T}, 
\label{eq:wc2}\\
  (\bH_{hk},\lambda_{hk})&\rightharpoonup(\bH,\lambda)\quad&&\text{in }
L^2(0,T;\XX),\label{eq:wc3}\\
    (\bH_{hk}^\pm,\lambda_{hk}^\pm)&\rightharpoonup(\bH,\lambda)
\quad&&\text{in } L^2(0,T;\XX),\label{eq:wc4}\\
 (\bH_{hk},\lambda_{hk})&\rightharpoonup (\bH,\lambda) \quad&&\text{in } 
H^1(0,T;\Ltwo{D}\times H^{1/2}(\Gamma)),\label{eq:wc31}\\ 
    \bv_{hk}^- &\rightharpoonup \bm_t\quad&&\text{in }\Ltwo{D_T},\label{eq:wc5}
\end{alignat}
\end{subequations}
for certain functions $\bm$, $\bH$, and $\lambda$ satisfying
$\bm\in \Hone{D_T}$, $\bH\in 
H^1(0,T;\Ltwo{D})$, and $(\bH,\lambda)\in 
L^2(0,T;\XX)$. Here~$\rightharpoonup$ denotes the weak convergence
and~$\to$ denotes the strong convergence in the relevant space.

Moreover, if the assumption~\cref{eq:mh0 Hh0} holds
then there holds additionally $|\bm|=1$ almost everywhere
in~$D_T$.
\end{lemma}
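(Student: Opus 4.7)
The plan is to combine the uniform a priori bounds of \cref{lem:denergygen} and \cref{lem:abert} with standard weak and strong compactness in reflexive Banach spaces, and to deduce the saturation constraint $|\bm|=1$ from the nodal orthogonality $\bv_h^i(z)\cdot\bm_h^i(z)=0$ used in the tangent-plane update.

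First I would collect uniform bounds. From~\cref{eq:denergy} and~\cref{eq:mhk pm} one reads off that $\bm_{hk}$ and $\bm_{hk}^\pm$ are bounded in $L^\infty(0,T;\Hone{D})$, that $\partial_t \bm_{hk}=\bv_{hk}^-$ is bounded in $\Ltwo{D_T}$ (so that $\bm_{hk}$ is bounded in $\Hone{D_T}$), that $(\bH_{hk}^\pm,\lambda_{hk}^\pm)$ and $(\bH_{hk},\lambda_{hk})$ are bounded in $L^2(0,T;\XX)$, and that $\partial_t(\bH_{hk},\lambda_{hk})$ is bounded in $L^2(0,T;\Ltwo{D}\times H^{1/2}(\Gamma))$. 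Since these spaces are reflexive, a diagonal extraction via Banach--Alaoglu produces common weak limits $\bm,\bH,\lambda,\bv$ of $\bm_{hk},\bH_{hk},\lambda_{hk},\bv_{hk}^-$. The $\pm$-versions share the same limits because $\|\bm_{hk}^\pm-\bm_{hk}\|_{\Ltwo{D_T}}^2\lesssim k^2\|\bv_{hk}^-\|_{\Ltwo{D_T}}^2\to 0$ and analogously for the electromagnetic variables using the discrete time-derivative bound in~\cref{eq:denergy}. The weak $L^2(0,T;\Hone{D})$ convergence of $\bm_{hk}^\pm$ follows from $\|\bm_{hk}^\pm-\bm_{hk}\|_{L^2(0,T;\Hone{D})}^2 \lesssim k^2\cdot k\sum_i\|\bv_h^i\|_{\Hone{D}}^2$: for $\theta\ge 1/2$ the quantity $k^2\sum_i\|\nabla\bv_h^i\|_{\Ltwo{D}}^2$ is already bounded by~\cref{eq:denergy}, and for $\theta<1/2$ an inverse estimate combined with the CFL-type assumption $k=o(h^2)$ closes the argument.

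Next I would identify $\bv_{hk}^-\rightharpoonup\bm_t$ by observing that $\partial_t\bm_{hk}=\bv_{hk}^-$ on each subinterval, so the weak limit of $\bv_{hk}^-$ in $\Ltwo{D_T}$ must coincide with the distributional time derivative of the weak $\Hone{D_T}$-limit of $\bm_{hk}$. Strong $\Ltwo{D_T}$-convergence of $\bm_{hk}$ then follows from the Aubin--Lions--Simon lemma, using the compact embedding $\Hone{D}\hookrightarrow\hookrightarrow\Ltwo{D}$ together with the boundedness of $\bm_{hk}$ in $H^1(0,T;\Ltwo{D})\cap L^2(0,T;\Hone{D})$; the closeness estimate above transfers this to $\bm_{hk}^\pm$.

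Finally, for the saturation constraint, the nodal orthogonality combined with the update rule~\cref{eq:mhip1} gives at each node $z\in\NN_h$ the identity $|\bm_h^{j}(z)|^2-|\bm_h^0(z)|^2=k^2\sum_{i<j}|\bv_h^i(z)|^2$. Applying nodal (mass-lumping) quadrature, whose weights are uniformly equivalent on $\SS^1(\TT_h)$ to the $\Ltwo{D}$ inner product, yields a bound $\|\Pi_{\SS}(|\bm_{hk}^\pm|^2-1)\|_{L^1(D)}\lesssim k\cdot k\sum_i\|\bv_h^i\|_{\Ltwo{D}}^2 + \|\,|\bm_h^0|^2-1\|_{L^1(D)}$, which is $O(k)+o(1)$ by~\cref{eq:denergy} and~\cref{eq:mh0 Hh0}. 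Controlling the gap between $|\bm_{hk}^\pm|^2$ and its nodal interpolant elementwise by $h^2\|\nabla\bm_{hk}^\pm\|_{\Ltwo{\tau}}^2$ and summing gives an $O(h^2)$ contribution via~\cref{eq:denergy}. Hence $|\bm_{hk}^\pm|^2\to 1$ in $L^1(D_T)$; combining with the a.e.\ convergent subsequence supplied by the strong $L^2$-convergence $\bm_{hk}^\pm\to\bm$ yields $|\bm|=1$ almost everywhere in $D_T$. I expect this last step to be the main obstacle, since it requires turning a purely nodal identity into an integral estimate, quantifying the nodal-interpolation defect of the piecewise-quadratic function $|\bm_{hk}^\pm|^2$, and arranging matters so that all error terms vanish independently of the precise relation between $h$ and $k$.
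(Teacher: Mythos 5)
Your proposal is correct and follows essentially the same line of argument that the paper implicitly invokes by citing~\cite{Abert_etal}: uniform bounds from~\cref{eq:denergy} and~\cref{eq:mhk pm}, weak compactness with identification of all limits, Aubin--Lions--Simon for the strong $\Ltwo{D_T}$ convergence, and the nodal telescoping identity $|\bm_h^j(z)|^2=|\bm_h^0(z)|^2+k^2\sum_{i<j}|\bv_h^i(z)|^2$ together with mass lumping and the interpolation defect of the piecewise-quadratic $|\bm_h^j|^2$ to obtain $|\bm|=1$ a.e.  The only superfluous step is forcing $\bm_{hk}^\pm-\bm_{hk}\to 0$ strongly in $L^2(0,T;\Hone{D})$ to deduce~\cref{eq:wc1a}: boundedness of $\bm_{hk}^\pm$ in $L^2(0,T;\Hone{D})$ plus the already established strong $\Ltwo{D_T}$ closeness already forces the weak $L^2(0,T;\Hone{D})$ limits to coincide, which avoids the inverse-estimate case distinction there (it is of course still needed for~\cref{lem:str con}).
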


\begin{proof}
The proof works analogously to~\cite{Abert_etal} and is therefore omitted.
\end{proof}

We also need the following strong convergence property.
\begin{lemma}\label{lem:str con}
Under the assumptions~\cref{eq:hk 12} and~\cref{eq:mh0 Hh0} there holds
\begin{equation}\label{eq:mhk h12}
\norm{\bm_{hk}^--\bm}{L^2(0,T;\H^{1/2}(D))}
\to 0
\quad\text{as } h,k\to 0.
\end{equation}
\end{lemma}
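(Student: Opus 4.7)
The plan is to derive strong convergence in $L^2(0,T;\H^{1/2}(D))$ from the previously established strong $L^2(D_T)$ convergence and uniform $L^2(0,T;\H^1(D))$-boundedness by means of the Gagliardo--Nirenberg-type interpolation inequality
\begin{equation*}
\norm{\bv}{\H^{1/2}(D)}^{2}
\;\le\; C_{\rm int}\,\norm{\bv}{\Ltwo{D}}\,\norm{\bv}{\Hone{D}}
\qquad\forall\,\bv\in\Hone{D},
\end{equation*}
which follows from the standard interpolation between $\Ltwo{D}$ and $\Hone{D}$ (see, e.g., the fractional Sobolev scale). This inequality is the natural bridge between the strong $\Ltwo{D_T}$-convergence~\cref{eq:wc2} and the weak $L^2(0,T;\Hone{D})$-convergence~\cref{eq:wc1a}.

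First, I would apply the above pointwise in $t$ to the difference $\bm_{hk}^-(t)-\bm(t)$ and integrate in time. Using Cauchy--Schwarz on the resulting time integral I obtain
\begin{equation*}
\norm{\bm_{hk}^{-}-\bm}{L^{2}(0,T;\H^{1/2}(D))}^{2}
\;\le\; C_{\rm int}\,
\norm{\bm_{hk}^{-}-\bm}{\Ltwo{D_T}}\,
\norm{\bm_{hk}^{-}-\bm}{L^{2}(0,T;\Hone{D})}.
\end{equation*}

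Next, I would bound the two factors separately. The first factor tends to zero as $h,k\to 0$ directly from~\cref{eq:wc2} in \Cref{lem:weakconv}. For the second factor, a triangle inequality splits
\begin{equation*}
\norm{\bm_{hk}^{-}-\bm}{L^{2}(0,T;\Hone{D})}
\le
\norm{\bm_{hk}^{-}}{L^{2}(0,T;\Hone{D})}+\norm{\bm}{L^{2}(0,T;\Hone{D})}.
\end{equation*}
The first summand is uniformly bounded by $C\,C_{\rm ener}$ thanks to~\cref{eq:mhk pm} in \Cref{lem:abert}; the second is finite because $\bm\in\Hone{D_T}$ by \Cref{lem:weakconv}. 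Thus the product is controlled by a constant times $\norm{\bm_{hk}^{-}-\bm}{\Ltwo{D_T}}$, which vanishes in the limit.

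The step that requires most care is verifying that all the hypotheses invoked in \Cref{lem:weakconv} (in particular the strong $\Ltwo{D_T}$-convergence~\cref{eq:wc2} and the uniform $L^{2}(0,T;\Hone{D})$-bound~\cref{eq:wc1a}) are indeed available under the sharper mesh condition~\cref{eq:hk 12}. Since~\cref{eq:hk 12} is strictly stronger than~\cref{eq:hk con}, all of these convergences are at our disposal, and no further argument is needed. No obstacle arises beyond the interpolation step itself, which is classical; the rest is a clean combination of weak--strong compactness and the energy estimate of \Cref{lem:denergygen}.
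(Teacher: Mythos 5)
Your proof is correct, and it takes a genuinely different route from the paper's. The paper first splits $\bm_{hk}^--\bm$ into $\bm_{hk}^--\bm_{hk}$ and $\bm_{hk}-\bm$ by the triangle inequality, sends the second piece to zero by the compact embedding $\Hone{D_T}\hookrightarrow L^2(0,T;\H^{1/2}(D))$ (Lions--Aubin) together with the weak convergence \cref{eq:wc1}, and then controls the first piece by the term $\sum_{i}k^3\norm{\bv_h^i}{\Hone{D}}^2$. It is precisely that term which forces the paper to invoke the energy bound \cref{eq:denergy} for $\theta>1/2$ and the inverse estimate together with the sharper condition \cref{eq:hk 12} for $\theta\le1/2$. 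Your argument bypasses that term entirely: the interpolation inequality $\norm{\bv}{\H^{1/2}(D)}^2\lesssim\norm{\bv}{\Ltwo{D}}\norm{\bv}{\Hone{D}}$ (which the paper itself records as $[\Ltwo{D},\Hone{D}]_{1/2}=\H^{1/2}(D)$ in the proof of \Cref{lem:h}) plus Cauchy--Schwarz in time reduces the claim to the strong convergence \cref{eq:wc2} and the uniform $L^2(0,T;\Hone{D})$-bound \cref{eq:mhk pm}, both of which already hold under the weaker assumption \cref{eq:hk con}. What each approach buys: the paper's split isolates the discrete backward-difference contribution explicitly, which is useful bookkeeping elsewhere; your interpolation argument is shorter, avoids the inverse inequality, and in fact shows that \cref{eq:mhk h12} holds under \cref{eq:hk con} rather than \cref{eq:hk 12}, a mild strengthening of the lemma. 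The one point worth stating explicitly if you write this up is that $\bm_{hk}^-(t)-\bm(t)\in\Hone{D}$ for a.e.\ $t$, so the pointwise interpolation inequality is indeed applicable before integrating in time; this holds since $\bm_{hk}^-(t)\in\SS^1(\TT_h)^3$ and $\bm\in\Hone{D_T}$.
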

\begin{proof}
It follows from the triangle inequality and the definitions of~$\bm_{hk}$
and~$\bm_{hk}^-$ that
\begin{align*}
 \norm{\bm_{hk}^-&-\bm}{L^2(0,T;\H^{1/2}(D))}^2\\
&\lesssim 
\norm{\bm_{hk}^--\bm_{hk}}{L^2(0,T;\H^{1/2}(D))}^2
+
\norm{\bm_{hk}-\bm}{L^2(0,T;\H^{1/2}(D))}^2 \\
&\leq
\sum_{i=0}^{N-1}
k^3\norm{\bv_h^i}{\H^{1/2}(D)}^2
+
\norm{\bm_{hk}-\bm}{L^2(0,T;\H^{1/2}(D))}^2 \\
&\leq 
\sum_{i=0}^{N-1}k^3\norm{\bv_h^i}{\Hone{D}}^2
+
\norm{\bm_{hk}-\bm}{L^2(0,T;\H^{1/2}(D))}^2.
\end{align*}
The second term on the right-hand side converges to zero due
to~\cref{eq:wc1} and the compact embedding of
\[
\Hone{D_T}
\simeq
\{\bv \, | \,
\bv\in L^2(0,T;\Hone{D}), \, \bv_t\in L^2(0,T;\Ltwo{D}) \}
\]
into $L^2(0,T;\H^{1/2}(D))$; see \cite[Theorem 5.1]{Lio69}.
For the first term on the right-hand side,
when $\theta>1/2$,~\cref{eq:denergy} implies
$
\sum_{i=0}^{N-1}k^3\norm{\bv_h^i}{\Hone{D}}^2
\lesssim k \to 0.
$
When $0\le\theta\leq 1/2$, a standard inverse inequality,~\cref{eq:denergy}
and~\cref{eq:hk 12} yield
\[
\sum_{i=0}^{N-1}k^3\norm{\bv_h^i}{\Hone{D}}^2
\lesssim 
\sum_{i=0}^{N-1}h^{-2}k^3\norm{\bv_h^i}{\Ltwo{D}}^2
\lesssim 
h^{-2}k^2 \to 0,
\]
completing the proof of the lemma.
\end{proof}

The following lemma involving the $\L^2$-norm of the cross product
of two vector-valued functions will be used when passing
to the limit of equation~\cref{eq:mhk Hhk1}.
\begin{lemma}\label{lem:h}
There exists a constant $C_{\rm sob}>0$ which depends only on $D$ such that
\begin{subequations}
\label{eq:m1}
 \begin{align}
  \norm{\bw_{1/2}\times\bw_1}{\Ltwo{D}}&\leq C_{\rm sob}\norm{\bw_{1/2}}{\H^{1/2}(D)}\norm{\bw_1}{\Hone{D}}\label{eq:l2}
\\
  \norm{\bw_0\times\bw_{1/2}}{\widetilde\H^{-1}(D)}&\leq C_{\rm sob}\norm{\bw_0}{\Ltwo{D}}\norm{\bw_{1/2}}{\H^{1/2}(D)},
 \end{align}
 \end{subequations}
 for all $\bw_0\in \Ltwo{D}$, $\bw_{1/2}\in\H^{1/2}(D)$, and all $\bw_{1}\in\Hone{D}$.
 \end{lemma}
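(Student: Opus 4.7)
The plan is to deduce both estimates from Hölder's inequality combined with two Sobolev embeddings that are special to the three-dimensional setting, and then to obtain the second estimate by duality from the first.

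For the pointwise bound $|\bw_{1/2}\times\bw_1|\leq|\bw_{1/2}|\,|\bw_1|$ combined with Hölder's inequality with exponents $3$ and $6$ gives
\[
\norm{\bw_{1/2}\times\bw_1}{\Ltwo{D}}
\leq
\norm{\bw_{1/2}}{\L^3(D)}\,\norm{\bw_1}{\L^6(D)}.
\]
Now I invoke the two Sobolev embeddings $\H^{1/2}(D)\hookrightarrow\L^3(D)$ and $\Hone{D}\hookrightarrow\L^6(D)$, both valid in dimension three with embedding constants depending only on the Lipschitz domain $D$. This yields the first inequality in~\cref{eq:m1} with a constant $C_{\rm sob}$ depending only on $D$.

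For the $\widetilde\H^{-1}(D)$-estimate I use duality. By definition,
\[
\norm{\bw_0\times\bw_{1/2}}{\widetilde\H^{-1}(D)}
=
\sup_{\bvarphi\in\Hone{D}^{},\,\bvarphi|_{\partial D}=0,\;\norm{\bvarphi}{\Hone{D}}\leq 1}
\dual{\bw_0\times\bw_{1/2}}{\bvarphi}_{D}.
\]
The scalar triple product identity $(\ba\times\bb)\cdot\bc=\ba\cdot(\bb\times\bc)$ then gives
\[
\dual{\bw_0\times\bw_{1/2}}{\bvarphi}_{D}
=
\dual{\bw_0}{\bw_{1/2}\times\bvarphi}_{D}
\leq
\norm{\bw_0}{\Ltwo{D}}\,\norm{\bw_{1/2}\times\bvarphi}{\Ltwo{D}},
\]
after which I apply the first inequality (just proved) to the factor $\bw_{1/2}\times\bvarphi$ with test field $\bvarphi\in\Hone{D}$. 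This produces
\[
\dual{\bw_0\times\bw_{1/2}}{\bvarphi}_{D}
\leq
C_{\rm sob}\,\norm{\bw_0}{\Ltwo{D}}\,\norm{\bw_{1/2}}{\H^{1/2}(D)}\,\norm{\bvarphi}{\Hone{D}},
\]
and taking the supremum over admissible $\bvarphi$ yields the second inequality in~\cref{eq:m1}.

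No serious obstacle is expected: both estimates are essentially nonlinear Sobolev estimates whose sharp form in three space dimensions is standard, the critical ingredients being $\H^{1/2}(D)\hookrightarrow\L^3(D)$ (the borderline case $s=1/2$, $n=3$) and $\Hone{D}\hookrightarrow\L^6(D)$. The only point worth double-checking is the admissibility of $\bvarphi$ as a test field for the $\widetilde\H^{-1}$-duality; here one either works with $C^\infty_c(D)$ and extends by density, or reads $\widetilde\H^{-1}(D)$ as the dual of $\Hone{D}$ with zero trace, both of which are standard.
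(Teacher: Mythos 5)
Your argument is the same in essence as the paper's: Hölder with exponents $3$ and $6$, the Sobolev embeddings $\H^{1/2}(D)\hookrightarrow\L^3(D)$ and $\Hone{D}\hookrightarrow\L^6(D)$ for the $\Ltwo{D}$-estimate, then duality via the scalar triple product for the $\widetilde\H^{-1}(D)$-estimate. The one technical difference is that the paper does not simply cite $\H^{1/2}(D)\hookrightarrow\L^3(D)$: it derives it by real interpolation of the two trivial endpoints $\Ltwo{D}\to\Ltwo{D}$ and $\Hone{D}\to\L^6(D)$, identifying $[\Ltwo{D},\Hone{D}]_{1/2}=\H^{1/2}(D)$ and $[\Ltwo{D},\L^6(D)]_{1/2}=\L^3(D)$. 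That is slightly more careful, because fractional Sobolev embeddings on Lipschitz domains require more justification than the integer case, and the interpolation route gives it for free; if you cite the embedding directly, you should give a reference valid for Lipschitz $D$.

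One point that needs correction: in your duality step you write the $\widetilde\H^{-1}(D)$-norm as a supremum over $\bvarphi\in\Hone{D}$ with $\bvarphi|_{\partial D}=0$, and you later suggest reading $\widetilde\H^{-1}(D)$ as the dual of the zero-trace subspace. That is the definition of $\H^{-1}(D)$, not of $\widetilde\H^{-1}(D)$; the latter is the dual of all of $\Hone{D}$ (no trace condition), and this is exactly how the paper uses it — the test functions fed into this norm in the proof of \cref{lem:mt vhi} are \emph{not} zero-trace. Your proof survives because the key chain of inequalities
\[
\dual{\bw_0\times\bw_{1/2}}{\bvarphi}_D
=\dual{\bw_0}{\bw_{1/2}\times\bvarphi}_D
\le\norm{\bw_0}{\Ltwo{D}}\norm{\bw_{1/2}\times\bvarphi}{\Ltwo{D}}
\le C_{\rm sob}\norm{\bw_0}{\Ltwo{D}}\norm{\bw_{1/2}}{\H^{1/2}(D)}\norm{\bvarphi}{\Hone{D}}
\]
holds for \emph{every} $\bvarphi\in\Hone{D}$, so you may simply drop the superfluous zero-trace restriction and the argument goes through. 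But as written, if you had genuinely restricted to zero-trace test fields, you would only have bounded the strictly smaller $\H^{-1}(D)$-norm, not the quantity asserted in the lemma.
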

\begin{proof}
It is shown in~\cite[Theorem~5.4, Part~I]{adams} that
the embedding $\iota\colon\Hone{D}\to\L^6(D)$ is continuous.
Obviously, the identity $\iota\colon \Ltwo{D}\to\Ltwo{D}$ is
continuous. By real interpolation, we find that $\iota\colon
[\Ltwo{D},\Hone{D}]_{1/2}\to [\Ltwo{D},\L^6(D)]_{1/2}$ is
continuous. Well-known results in interpolation theory show
$
[\Ltwo{D},\Hone{D}]_{1/2}= \H^{1/2}(D)
$
and
$
[\Ltwo{D},\L^6(D)]_{1/2}=\L^3(D)
$
with equivalent norms; see e.g.~\cite[Theorem~5.2.1]{BL}.
By using H\"older's inequality, we deduce
\begin{align*}
 \norm{\bw_{0}\times\bw_{1}}{\Ltwo{D}}\leq
\norm{\bw_{0}}{\L^3(D)}\norm{\bw_{1}}{\L^6(D)}
\lesssim
\norm{\bw_{0}}{\H^{1/2}(D)}\norm{\bw_1}{\Hone{D}}
\end{align*}
proving~\cref{eq:l2}.

For the second statement, there holds with the well-known identity
\begin{equation}\label{eq:abc}
\ba\cdot(\bb\times\bc)= \bb\cdot(\bc\times\ba)=
\bc\cdot(\ba\times\bb)
\quad\forall\ba,\bb,\bc\in\R^3
\end{equation}
that
 \begin{align*}
 \norm{\bw_0\times\bw_{1/2}}{\widetilde \H^{-1}(D)}&=\sup_{\bw_1\in \Hone{D}\setminus\{0\}}\frac{\dual{\bw_0\times\bw_{1/2}}{\bw_1}_D}{\norm{\bw_1}{\Hone{D}}}\\
 &\leq
 \sup_{\bw_1\in \Hone{D}\setminus\{0\}}\frac{\norm{\bw_0}{\Ltwo{D}}\norm{\bw_{1/2}\times\bw_1}{\Ltwo{D}}}{\norm{\bw_1}{\Hone{D}}}.
 \end{align*}
 The estimate~\cref{eq:l2} concludes the proof.
\end{proof}

Finally, to pass to the limit in equation~\cref{eq:mhk Hhk2} we
need the following result.
\begin{lemma}\label{lem:wea con}
For any sequence~$\{\lambda_h\}\subset H^{1/2}(\Gamma)$ 
and any function~$\lambda\in H^{1/2}(\Gamma)$, if
\begin{equation}\label{eq:zet con}
\lim_{h\to0}
\dual{\lambda_h}{\nu}_{\Gamma}
=
\dual{\lambda}{\nu}_{\Gamma}
\quad\forall\nu\in H^{-1/2}(\Gamma)
\end{equation}
then
\begin{equation}\label{eq:dtn zet con}
\lim_{h\to0}
\dual{\dtn_h\lambda_h}{\zeta}_{\Gamma}
=
\dual{\dtn\lambda}{\zeta}_{\Gamma}
\quad\forall\zeta\in H^{1/2}(\Gamma).
\end{equation}
\end{lemma}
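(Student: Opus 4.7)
The hypothesis \cref{eq:zet con} is precisely weak convergence $\lambda_h \rightharpoonup \lambda$ in $H^{1/2}(\Gamma)$ via the $H^{1/2}$--$H^{-1/2}$ duality, and Banach--Steinhaus then gives uniform boundedness $\sup_h \norm{\lambda_h}{H^{1/2}(\Gamma)} < \infty$. I would first expand both sides of \cref{eq:dtn zet con} using the symmetric coupling \cref{eq:bem}--\cref{eq:bem2} together with the continuous representation \cref{eq:dtn2}: writing $\mu := \slp^{-1}(\dlp - 1/2)\lambda$ and letting $\mu_h \in \PP^0(\TT_h|_\Gamma)$ denote the Galerkin solution of \cref{eq:bem} driven by $\lambda_h$, one obtains
\[
\dual{\dtn_h \lambda_h - \dtn\lambda}{\zeta}_\Gamma = \dual{\mu_h - \mu}{g}_\Gamma - \dual{\lambda_h - \lambda}{\hyp\zeta}_\Gamma,
\]
where $g := (1/2-\dlp)\zeta \in H^{1/2}(\Gamma)$ and self-adjointness of $\hyp$ has been used. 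The second term vanishes immediately since $\hyp\zeta \in H^{-1/2}(\Gamma)$ is a fixed test element.

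For the first term I would introduce the auxiliary Galerkin solution $\widetilde\mu_h \in \PP^0(\TT_h|_\Gamma)$ driven by $\lambda$ instead of $\lambda_h$ and split $\mu_h - \mu = (\mu_h - \widetilde\mu_h) + (\widetilde\mu_h - \mu)$. The contribution of $\widetilde\mu_h - \mu$ vanishes by classical BEM convergence (C\'ea's lemma for the $\slp$-elliptic form combined with density of $\bigcup_h \PP^0(\TT_h|_\Gamma)$ in $H^{-1/2}(\Gamma)$). The remaining Galerkin error $\mu_h - \widetilde\mu_h$ is only uniformly bounded in $H^{-1/2}(\Gamma)$, not small, because $\lambda_h \rightharpoonup \lambda$ is merely weak, so I would apply a duality trick: let $\mu^* := \slp^{-1} g \in H^{-1/2}(\Gamma)$ and $\mu^*_h \in \PP^0(\TT_h|_\Gamma)$ its Galerkin projection. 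Symmetry of $\slp$ then yields
\[
\dual{\mu_h - \widetilde\mu_h}{g}_\Gamma = \dual{\slp(\mu_h - \widetilde\mu_h)}{\mu^*}_\Gamma,
\]
and splitting $\mu^* = \mu^*_h + (\mu^* - \mu^*_h)$ controls the remainder $\dual{\slp(\mu_h - \widetilde\mu_h)}{\mu^* - \mu^*_h}_\Gamma$ by a uniform constant times $\norm{\mu^* - \mu^*_h}{H^{-1/2}(\Gamma)}$, which again tends to zero by C\'ea applied to $\mu^*$. Using the Galerkin equation for $\mu_h - \widetilde\mu_h$, the main part becomes $\dual{(\dlp - 1/2)(\lambda_h - \lambda)}{\mu^*_h}_\Gamma$.

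To finish, I would split this further as
\[
\dual{\lambda_h - \lambda}{(\dlp' - 1/2)\mu^*}_\Gamma + \dual{(\dlp - 1/2)(\lambda_h - \lambda)}{\mu^*_h - \mu^*}_\Gamma.
\]
The first summand vanishes by the hypothesis \cref{eq:zet con} applied to the fixed element $(\dlp' - 1/2)\mu^* \in H^{-1/2}(\Gamma)$, while the second is again a product of a uniformly bounded factor with a strongly vanishing one, hence vanishes as well. The main obstacle is precisely that only weak, not strong, convergence of $\lambda_h$ is available, so one cannot pass to the limit directly in $\mu_h - \widetilde\mu_h$; the duality argument combined with symmetry of $\slp$ is the essential device that decouples the weak convergence of the data from the strong convergence of the BEM Galerkin projections.
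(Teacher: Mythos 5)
Your opening identity is not quite right, and that is the one genuine gap in the argument. The discrete operator $\dtn_h$ is defined by~\cref{eq:bem2} as an element of $\SS^1(\TT_h|_\Gamma)$ whose action is prescribed only against \emph{discrete} test functions $\zeta_h\in\SS^1(\TT_h|_\Gamma)$; for a general $\zeta\in H^{1/2}(\Gamma)$ one does \emph{not} have
$\dual{\dtn_h\lambda_h}{\zeta}_\Gamma=\dual{(1/2-\dlp')\mu_h}{\zeta}_\Gamma-\dual{\hyp\lambda_h}{\zeta}_\Gamma$,
so the claimed decomposition
$\dual{\dtn_h\lambda_h-\dtn\lambda}{\zeta}_\Gamma=\dual{\mu_h-\mu}{g}_\Gamma-\dual{\lambda_h-\lambda}{\hyp\zeta}_\Gamma$
does not hold as written. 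The fix is exactly what the paper's proof does: choose $\zeta_h\in\SS^1(\TT_h|_\Gamma)$ with $\zeta_h\to\zeta$ strongly in $H^{1/2}(\Gamma)$, write
$\dual{\dtn_h\lambda_h-\dtn\lambda}{\zeta}_\Gamma
=\dual{\dtn_h\lambda_h-\dtn\lambda}{\zeta_h}_\Gamma
+\dual{\dtn_h\lambda_h-\dtn\lambda}{\zeta-\zeta_h}_\Gamma$,
apply the representation only in the first term, and control all of the $(\zeta-\zeta_h)$-error terms (including the one hidden in the $\hyp$-contribution) by uniform boundedness of $\dtn_h\lambda_h$, $\hyp(\lambda_h-\lambda)$, and $\mu_h-\mu$ in $H^{-1/2}(\Gamma)$ together with the strong convergence of $\zeta_h$. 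After this reduction one is left with $\dual{(1/2-\dlp')(\mu_h-\mu)}{\zeta}_\Gamma=\dual{\mu_h-\mu}{g}_\Gamma$ with $g$ fixed, which is where your argument picks up.

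That said, the remainder of your proposal — the split $\mu_h-\mu=(\mu_h-\widetilde\mu_h)+(\widetilde\mu_h-\mu)$, the observation that $\mu_h-\widetilde\mu_h$ is only weak-$*$ bounded and therefore requires a duality device, the introduction of $\mu^*=\slp^{-1}g$ and its Galerkin projection $\mu^*_h$, transfer via symmetry of $\slp$, and the final passage to $\dual{\lambda_h-\lambda}{(\dlp'-1/2)\mu^*}_\Gamma$ handled by~\cref{eq:zet con} — is correct. In fact it supplies the substantive content that the paper dismisses as ``standard convergence arguments in boundary element methods'' after its own reduction to~\cref{eq:muh mu}; your Aubin--Nitsche-type duality trick is exactly what is needed to convert the merely weak convergence of $\lambda_h$ into a usable limit, and that observation is not trivial. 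With the opening step repaired as above, your write-up is a complete and arguably more informative proof than the one in the paper.
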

\begin{proof}
Let~$\mu$ and~$\mu_h$ be defined
by~\cref{eq:bem} with~$\lambda$ in the second equation replaced 
by~$\lambda_h$. Then (recalling that Costabel's symmetric
coupling is used) $\dtn\lambda$ and~$\dtn_h\lambda_h$ are
defined via~$\mu$ and~$\mu_h$ by~\cref{eq:dtn2} 
and~\cref{eq:bem2}, respectively, namely,
$\dtn\lambda = (1/2-\dlp^\prime)\mu - \hyp\lambda$ and
$
\dual{\dtn_h\lambda_h}{\zeta_h}_\Gamma 
= 
\dual{(1/2-\dlp^\prime)\mu_h}{\zeta_h}_\Gamma 
-
\dual{\hyp \lambda_h}{\zeta_h}_\Gamma
$
for all~$\zeta_h\in \SS^1(\TT_h|_\Gamma)$.
For any~$\zeta\in H^{1/2}(\Gamma)$, 
let~$\{\zeta_h\}$ be a sequence
in~$\SS^1(\TT_h|_\Gamma)$ 
satisfying~$\lim_{h\to0}\norm{\zeta_h-\zeta}{H^{1/2}(\Gamma)}=0$.
By using the triangle inequality and the above representations
of~$\dtn\lambda$ and~$\dtn_h\lambda_h$ we deduce
\begin{align}\label{eq:dtn dtn}
\big|
\dual{\dtn_h\lambda_h}{\zeta}
-
\dual{\dtn\lambda}{\zeta}_\Gamma
\big| 
&\leq 
\big|\dual{\dtn_h\lambda_h-\dtn\lambda}{\zeta_h}_\Gamma\big|
+
\big|\dual{\dtn_h\lambda_h-\dtn\lambda}{\zeta-\zeta_h}_\Gamma\big|
\notag
\\ 
&\le
\big|
\dual{(\tfrac12-\dlp^\prime)(\mu_h-\mu)}{\zeta_h}_\Gamma 
\big|
+
\big|
\dual{\hyp (\lambda_h-\lambda)}{\zeta_h}_\Gamma
\big|
\notag
\\
&\quad
+
\big|\dual{\dtn_h\lambda_h-\dtn\lambda}{\zeta-\zeta_h}_\Gamma\big|
\notag
\\
&\le
\big|
\dual{(\tfrac12-\dlp^\prime)(\mu_h-\mu)}{\zeta_h}_\Gamma 
\big|
+
\big|
\dual{\hyp (\lambda_h-\lambda)}{\zeta}_\Gamma
\big|
\notag
\\
&\quad
+
\big|
\dual{\hyp (\lambda_h-\lambda)}{\zeta_h-\zeta}_\Gamma
\big|
+
\big|\dual{\dtn_h\lambda_h-\dtn\lambda}{\zeta-\zeta_h}_\Gamma\big|.
\end{align}
The second term on the right-hand side of~\cref{eq:dtn dtn}
goes to zero as~$h\to0$ due to~\cref{eq:zet con} and the
self-adjointness of~$\hyp$. The third term converges to zero
due to the strong convergence~$\zeta_h\to\zeta$
in~$H^{1/2}(\Gamma)$ and the boundedness of~$\{\lambda_h\}$
in~$H^{1/2}(\Gamma)$, which is a consequence of~\cref{eq:zet
con} and the Banach-Steinhaus Theorem. The last term tends to
zero due to the convergence of~$\{\zeta_h\}$ and the boundedness
of~$\{\dtn_h\lambda_h\}$; see~\cref{eq:dtnelliptic}.
Hence~\cref{eq:dtn zet con} is proved if we prove
\begin{equation}\label{eq:muh mu}
\lim_{h\to0}
\dual{(1/2-\dlp^\prime)(\mu_h-\mu)}{\zeta_h}_{\Gamma}
= 0.
\end{equation}
\revision{This, however, follows from standard convergence arguments \comment{in
boundary element methods} and concludes the proof.}
\end{proof}

\subsection{Further lemmas for the proofs of \Cref{thm:strongconvLLG,thm:strongconvELLG}}
In this section, we prove all necessary results for the proof of the a~priori error estimates.
\begin{lemma} \revision{Recall the operators $\dtn$ and $\dtn_h$ defined in~\cref{eq:dtn2} and~\cref{eq:bem2}.
	Given $\lambda$ such that $\dtn \lambda\in H^{1/2}_-(\Gamma)$, there holds}
	\begin{align}\label{eq:dtnh}
	\sup_{\xi_h\in\SS^1(\TT_h|_\Gamma)\setminus\{0\}}\frac{\dual{\dtn
\lambda -
\dtn_h\lambda}{\xi_h}_\Gamma}{\norm{\xi_h}{H^{1/2}(\Gamma)}}\leq
C_\dtn h\norm{\dtn\lambda}{H^{1/2}_{\comment{\rm pw}}(\Gamma)}.
	\end{align}
\end{lemma}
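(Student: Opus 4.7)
The plan is to unfold the definitions \cref{eq:dtn2} and \cref{eq:bem2}, reduce the consistency error for $\dtn_h$ to a Galerkin error for the single-layer equation, and then invoke standard BEM approximation results. First I would observe that \cref{eq:dtn} together with the first equation in \cref{eq:bem} implies the identity $\mu = \dtn\lambda$; hence the regularity hypothesis means $\mu\in H^{1/2}_{\rm pw}(\Gamma)$. Subtracting \cref{eq:bem2} from \cref{eq:dtn2}, the hypersingular contribution cancels, and the $L^2$-duality between $\dlp$ and $\dlp'$ gives, for every $\xi_h\in\SS^1(\TT_h|_\Gamma)$,
\begin{equation*}
\dual{\dtn\lambda - \dtn_h\lambda}{\xi_h}_\Gamma
= \dual{(1/2-\dlp')(\mu-\mu_h)}{\xi_h}_\Gamma
= \dual{\mu - \mu_h}{(1/2-\dlp)\xi_h}_\Gamma.
\end{equation*}

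Next I would split this into a product using the duality pairing $H^{-1/2}(\Gamma)\times H^{1/2}(\Gamma)$, arriving at
\begin{equation*}
\dual{\dtn\lambda-\dtn_h\lambda}{\xi_h}_\Gamma
\le \norm{\mu-\mu_h}{H^{-1/2}(\Gamma)}\,\norm{(1/2-\dlp)\xi_h}{H^{1/2}(\Gamma)}.
\end{equation*}
The second factor is bounded by $C\norm{\xi_h}{H^{1/2}(\Gamma)}$ because $\dlp\colon H^{1/2}(\Gamma)\to H^{1/2}(\Gamma)$ is continuous (see, e.g., \cite{mclean}). For the first factor I would use that $\mu_h$ is the Galerkin approximation of $\mu$ with respect to the bilinear form $\dual{\slp\cdot}{\cdot}_\Gamma$, which is $H^{-1/2}(\Gamma)$-elliptic and continuous since $D^\ast$ is simply connected. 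C\'ea's lemma then yields quasi-optimality
\begin{equation*}
\norm{\mu-\mu_h}{H^{-1/2}(\Gamma)}
\le C\inf_{\nu_h\in\PP^0(\TT_h|_\Gamma)}\norm{\mu-\nu_h}{H^{-1/2}(\Gamma)}.
\end{equation*}

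The remaining ingredient is an $O(h)$ approximation bound for piecewise constants on $\TT_h|_\Gamma$ in the $H^{-1/2}(\Gamma)$-norm under the regularity $\mu\in H^{1/2}_{\rm pw}(\Gamma)$. Choosing $\nu_h$ as the elementwise $L^2$-projection $P_0\mu$, this follows from an Aubin--Nitsche-type duality argument: test against $\eta\in H^{1/2}(\Gamma)$, use $L^2$-orthogonality to replace $\eta$ by $\eta-P_0\eta$, and apply the standard facewise estimates $\norm{\mu-P_0\mu}{L^2(\Gamma)}\le Ch^{1/2}\norm{\mu}{H^{1/2}_{\rm pw}(\Gamma)}$ and $\norm{\eta-P_0\eta}{L^2(\Gamma)}\le Ch^{1/2}\norm{\eta}{H^{1/2}(\Gamma)}$ to gain a total factor of $h$. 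Combining all estimates and taking the supremum over $\xi_h\in\SS^1(\TT_h|_\Gamma)\setminus\{0\}$ yields \cref{eq:dtnh}.

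The main subtlety, and the step I would write out most carefully, is the $H^{-1/2}$-approximation estimate on a polyhedral $\Gamma$ from the piecewise-smooth data $\mu\in H^{1/2}_{\rm pw}(\Gamma)$: the duality argument only works because the test function $\eta$ lives in the global space $H^{1/2}(\Gamma)$ where the $L^2$-projection remains stable and convergent, while the $L^2$-orthogonality eliminates the mismatch caused by $\mu$ only being piecewise smooth. All other ingredients---the mapping properties of $\dlp$, the self-adjointness used in the first display, and the ellipticity of $\slp$---are standard and available in the BEM literature referenced above.
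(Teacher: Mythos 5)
Your proof is correct and follows essentially the same route as the paper's: both reduce the consistency error to the Galerkin error $\norm{\mu-\mu_h}{H^{-1/2}(\Gamma)}$ for the weakly singular equation (observing that the hypersingular terms cancel), then bound $(1/2-\dlp')(\mu-\mu_h)$ via the mapping properties of the double-layer operator; you simply spell out the ``standard arguments'' the paper leaves implicit (C\'ea's lemma and the Aubin--Nitsche duality step giving the $O(h)$ rate in $H^{-1/2}(\Gamma)$ for $H^{1/2}_{\rm pw}$ data), and you move $\dlp'$ to its adjoint $\dlp$ before estimating, which is cosmetically different from the paper's direct bound on $(1/2-\dlp')$ but equivalent. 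One minor slip: the $H^{-1/2}(\Gamma)$-ellipticity of $\slp$ in 3D holds unconditionally on a bounded connected Lipschitz surface and does not rely on $D^\ast$ being simply connected --- that hypothesis enters the paper only to guarantee the existence of the scalar potential $\varphi$, not the coercivity of the single-layer form.
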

\begin{proof}
	By definition of $\mu_h$ in~\cref{eq:bem}  as the Galerkin approximation of $\mu=\dtn\lambda$, there
	holds by standard arguments
	\begin{align*}
	\norm{\mu-\mu_h}{H^{-1/2}(\Gamma)}\lesssim
h\norm{\mu}{H_{\comment{\rm pw}}^{1/2}(\Gamma)}.
	\end{align*}
	Hence, there holds with the mapping properties of $\dlp^\prime$
	\begin{align*}
	\dual{\dtn \lambda - \dtn_h\lambda}{\xi_h}_\Gamma=\dual{(1/2-\dlp^\prime)(\mu-\mu_h)}{\xi_h}_\Gamma\lesssim
	h\norm{\mu}{H_{\comment{\rm pw}}^{1/2}(\Gamma)}\norm{\xi_h}{H^{1/2}(\Gamma)}.
	\end{align*}
	This concludes the proof.
\end{proof}
The next lemma proves that the time derivative of the exact solution,
namely $\bm_t(t_i)$, 
can be approximated in the discrete tangent space $\KK_{\bm_h^i}$
in such a way
that the error can be controlled by the error in the approximation
of~$\bm(t_i)$ by $\bm_h^i$.
\begin{lemma}\label{lem:orthoproj} Assume the following regularity of the
strong solution $\bm$ of~\cref{eq:strong}:
\begin{align*}
C_{\rm reg}
&:=
\norm{\bm}{W^{1,\infty}(0,T;\H^2(D))}
+
\norm{\bm}{W^{1,\infty}(0,T;\W^{1,\infty}(D))}<\infty.
\end{align*}
For any $i=1,\ldots, N$ let $\P_h^i\colon \H^1(D)\to \KK_{\bm_h^i}$ denote
the orthogonal projection onto $\KK_{\bm_h^i}$. Then
\begin{align*}
\norm{\bm_t(t_i)-\P_h^i\bm_t(t_i)}{\Hone{D}}
\leq
C_{\P} \big( h + \norm{\bm(t_i)-\bm_h^i}{\Hone{D}} \big),
\end{align*}
where $C_{\P}>0$ depends only on $C_{\rm reg}$ and the shape regularity of
$\TT_h$.
\end{lemma}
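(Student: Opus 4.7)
The plan is to use the best-approximation property of the $\Hone{D}$-orthogonal projection, which reduces the task to constructing a single suitable $\bpsi_h\in\KK_{\bm_h^i}$ with
\[
\norm{\bm_t(t_i)-\bpsi_h}{\Hone{D}}\le C_\P\bigl(h+\norm{\bm(t_i)-\bm_h^i}{\Hone{D}}\bigr).
\]
The natural candidate is the pointwise tangential projection evaluated at the nodes: for every $z\in\NN_h$, set
\[
\bpsi_h(z):=\bm_t(t_i,z)-\frac{\bm_t(t_i,z)\cdot\bm_h^i(z)}{|\bm_h^i(z)|^2}\,\bm_h^i(z),
\]
and extend by piecewise linear interpolation. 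Then $\bpsi_h(z)\cdot\bm_h^i(z)=0$ at every node by construction, so $\bpsi_h\in\KK_{\bm_h^i}$. Without loss of generality one may assume $|\bm_h^i(z)|\ge 1/2$ at every node, since otherwise $\norm{\bm(t_i)-\bm_h^i}{\Hone{D}}$ is bounded below by a positive constant (using $|\bm(t_i,z)|=1$ and norm equivalence) and the asserted inequality holds trivially after enlarging $C_\P$.

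Next I would decompose the error as
\[
\bm_t(t_i)-\bpsi_h=\bigl(\bm_t(t_i)-\Pi_\SS\bm_t(t_i)\bigr)+\Pi_\SS\!\Bigl(\frac{\bm_t(t_i)\cdot\bm_h^i}{|\bm_h^i|^2}\,\bm_h^i\Bigr),
\]
where the second summand equals $\Pi_\SS\bm_t(t_i)-\bpsi_h$ by the nodewise definition of $\bpsi_h$. The first summand is controlled by the standard nodal interpolation estimate $\le Ch\norm{\bm_t(t_i)}{\H^2(D)}\le C\,C_{\rm reg}\,h$. For the second summand, the crucial observation is that $|\bm(t_i,\cdot)|=1$ forces $\bm(t_i,\cdot)\cdot\bm_t(t_i,\cdot)=0$ pointwise, so at every node
\[
\bm_t(t_i,z)\cdot\bm_h^i(z)=\bm_t(t_i,z)\cdot\bigl(\bm_h^i(z)-\bm(t_i,z)\bigr),
\]
which makes the nodal values of the tangential correction of order $|\bm_h^i(z)-\bm(t_i,z)|$.

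Combining this nodewise bound with the Leibniz rule, the lower bound $|\bm_h^i|\ge 1/2$, the $\W^{1,\infty}$-regularity encoded in $C_{\rm reg}$, elementwise inverse estimates for the piecewise linear function $\bm_h^i$ (to transfer nodewise control to $\Hone{D}$-control), and the standard stability of Lagrange interpolation, one then concludes
\[
\Bigl\|\Pi_\SS\!\Bigl(\frac{\bm_t(t_i)\cdot\bm_h^i}{|\bm_h^i|^2}\,\bm_h^i\Bigr)\Bigr\|_{\Hone{D}}\le C\bigl(h+\norm{\bm(t_i)-\bm_h^i}{\Hone{D}}\bigr),
\]
with constants depending only on $C_{\rm reg}$ and the shape regularity of $\TT_h$. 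Adding the two contributions yields the claim. The main obstacle is this last $\Hone{D}$-estimate of the nonlinear correction: the factor $\bm_h^i/|\bm_h^i|^2$ is not polynomial, so controlling its derivatives requires both the pointwise lower bound $|\bm_h^i|\ge 1/2$ and a careful elementwise application of inverse inequalities for $\nabla\bm_h^i$; once these two ingredients are secured, the remaining product- and chain-rule bookkeeping is routine.
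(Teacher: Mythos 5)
Your strategy --- reduce to constructing one explicit competitor in $\KK_{\bm_h^i}$ and then use best approximation --- is the same strategy as the paper's, but the competitor is different, and there is a genuine gap in your argument.

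The gap is in the step ``without loss of generality $|\bm_h^i(z)|\ge 1/2$ at every node.'' You justify it by claiming that otherwise $\norm{\bm(t_i)-\bm_h^i}{\Hone{D}}$ is bounded below by a positive constant. That is false. If $|\bm_h^i(z_0)|<1/2$ at a single node $z_0$, then $|\bm(t_i,z_0)-\bm_h^i(z_0)|>1/2$, but for a piecewise linear function $v$ one only has $\norm{v}{\Ltwo{T}}^2\simeq h^3\sum_{z\in\NN_h\cap T}|v(z)|^2$, so combining this with $\norm{\bm(t_i)-\Pi_\SS\bm(t_i)}{\Hone{D}}\lesssim h$ yields at best $\norm{\bm(t_i)-\bm_h^i}{\Hone{D}}\gtrsim h^{3/2}$, which for small $h$ is actually \emph{smaller} than the $h$-term already on the right-hand side. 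Meanwhile the left-hand side $\norm{\bm_t(t_i)-\P_h^i\bm_t(t_i)}{\Hone{D}}$ has no a~priori bound better than $\norm{\bm_t(t_i)}{\Hone{D}}\le C_{\rm reg}=\mathcal{O}(1)$. So the claimed inequality is emphatically \emph{not} trivial in this regime; you cannot absorb it by enlarging $C_\P$, and the case $|\bm_h^i(z)|<1/2$ must be handled in earnest. A secondary concern: even granting the lower bound, you declare the $\Hone{D}$-estimate of $\Pi_\SS(\tfrac{\bm_t\cdot\bm_h^i}{|\bm_h^i|^2}\bm_h^i)$ ``routine.'' It is not obviously so: the map $x\mapsto (a\cdot x)x/|x|^2$ is only Lipschitz on $\{|x|\ge 1/2\}$, so to get from nodal bounds to gradient bounds via $\norm{\nabla\bw}{\Ltwo{T}}^2\simeq h\sum_{z,z'}|\bw(z)-\bw(z')|^2$ you need to control differences of a rational function evaluated at two different piecewise-linear nodal values, which is exactly the bookkeeping that would have to produce the factor $\norm{\bm(t_i)-\bm_h^i}{\Hone{D}}$; you have not shown this.

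For contrast, the paper avoids both issues by choosing a different competitor: it introduces the \emph{linear} map $R_z^i\ba:=-\ba\times(\bm(t_i,z)\times\bm_t(t_i,z))$ on $\R^3$, which satisfies $R_z^i\ba\cdot\ba=0$, $|R_z^i\ba|\le|\bm_t(t_i,z)|\,|\ba|$, and crucially $R_z^i\bm(t_i,z)=\bm_t(t_i,z)$ (this is where $|\bm|=1$ and $\bm\cdot\bm_t=0$ are used). Taking $\bw(z):=R_z^i\bm_h^i(z)$ gives a member of $\KK_{\bm_h^i}$, and the nodal error $\Pi_\SS\bm_t(t_i,z)-\bw(z)=R_z^i(\bm(t_i,z)-\bm_h^i(z))$ is \emph{linear} in the nodal discretization error, with no denominator and no case split. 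Everything downstream (the split with an elementwise piecewise-constant correction $\bw_T$, inverse estimates, and the nodal $H^1$-seminorm equivalence) then goes through cleanly and uniformly. If you want to salvage your approach, you would need to replace the hard cutoff at $|\bm_h^i|\ge1/2$ by an argument that is valid for all $\bm_h^i$, and supply the $H^1$-estimate of the nonlinear correction in full; the paper's linearization via $R_z^i$ is precisely the device that makes both of these unnecessary.
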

\begin{proof}
We fix $i\in\{1,\ldots,N\}$. Due to the well-known result
\[
\norm{\bm_t(t_i)-\P_h^i\bm_t(t_i)}{\Hone{D}}
=
\inf_{\bw\in\KK_{\bm_h^i}}
\norm{\bm_t(t_i)-\bw}{\Hone{D}}
\]
and the estimate (recalling the definition of the interpolation~$\Pi_{\SS}$ in
Subsection~\ref{subsec:dis spa})
\begin{align*}
\norm{\bm_t(t_i)-\bw}{\Hone{D}}
&\le
\norm{\bm_t(t_i)-\Pi_\SS\bm_t(t_i)}{\Hone{D}}
+
\norm{\Pi_\SS\bm_t(t_i)-\bw}{\Hone{D}}
\\
&\le
C_{\rm reg} \ h
+
\norm{\Pi_\SS\bm_t(t_i)-\bw}{\Hone{D}}
\quad\forall\bw\in\KK_{\bm_h^i},
\end{align*}
it suffices to prove that
\begin{equation}\label{eq:Ihmt w}
\inf_{\bw\in\KK_{\bm_h^i}}
\norm{\Pi_\SS\bm_t(t_i)-\bw}{\Hone{D}}
\lesssim
h + \norm{\bm(t_i)-\bm_h^i}{\Hone{D}},
\end{equation}
where the constant depends only on $C_{\rm reg}$ and the shape regularity
of~$\TT_h$.

To this end we first note that the assumption on the regularity of the
exact solution~$\bm$ implies that after a modification on a set of measure
zero, we can assume that $\bm$ is continuous in~$D_T$. Hence
\[
|\bm(t_i,x)| = 1 \quad\text{and}\quad \bm_t(t_i,x)\cdot\bm(t_i,x) = 0
\quad\forall x\in D.
\]
Thus by using the elementary identity
\[
\ba\times(\bb\times \bc)=(\ba\cdot \bc)\bb-(\ba\cdot \bb)\bc
\quad\forall\ba,\bb,\bc\in\R^3
\]
it can be easily shown that
\begin{equation}\label{eq:mtz}
\Pi_\SS\bm_t(t_i,z)
=
\bm_t(t_i,z)
=
R_z^i\bm(t_i,z)
\end{equation}
where, for any $z\in\NN_h$,
the mapping $R_z^i : \R^3\to\R^3$ is defined by
\[
R_z^i\ba
=
-
\ba
\times
\big(
\bm(t_i,z)
\times
\bm_t(t_i,z)
\big)
\quad\forall\ba\in\R^3.
\]
We note that this mapping has the following properties
\begin{equation}\label{eq:Rzi}
R_z^i\ba \cdot \ba = 0
\quad\text{and}\quad
|R_z^i\ba| \le |\bm_t(t_i,z)|\ |\ba|
\quad\forall\ba\in\R^3.
\end{equation}

Next, prompted by~\cref{eq:mtz} and~\cref{eq:Rzi}
in order to prove~\cref{eq:Ihmt w} 
we define $\bw\in\SS^1(\TT_h)$ by
\begin{equation}\label{eq:w def}
\bw(z)
=
R_z^i
\bm_h^i(z)
\quad\forall z\in\NN_h.
\end{equation}
Then $\bw\in\KK_{\bm_h^i}$ and we can
estimate~$\norm{\Pi_\SS\bm_t(t_i)-\bw}{\Hone{D}}^2$ by
\begin{align*}
\norm{\Pi_\SS\bm_t(t_i)-\bw}{\Hone{D}}^2
&=
\sum_{T\in\TT_h}
\norm{\Pi_\SS\bm_t(t_i)-\bw}{\Hone{T}}^2
\\
&\lesssim
\sum_{T\in\TT_h}
\norm{\Pi_\SS\bm_t(t_i)-\bw-\bw_T}{\Hone{T}}^2
+
\sum_{T\in\TT_h}
\norm{\bw_T}{\Hone{T}}^2
\end{align*}
where $\bw_T$ is polynomial of degree~1 on~$T$ defined by
\begin{equation}\label{eq:bwT}
\bw_T(z)
=
R_z^i
\big(
\bm(t_i,z_T)-\bm_h^i(z_T)
\big)
\quad\forall z\in\NN_h\cap T.
\end{equation}
Here, $z_T$ is the node in~$T$ satisfying
\begin{equation}\label{eq:z0}
|\bm(t_i,z_T)-\bm_h^i(z_T)|
\le
|\bm(t_i,z)-\bm_h^i(z)|
\quad\forall z\in\NN_h\cap T.
\end{equation}
Thus,~\cref{eq:Ihmt w} is proved if we prove
\begin{equation}\label{eq:A1}
\sum_{T\in\TT_h}
\norm{\Pi_\SS\bm_t(t_i)-\bw-\bw_T}{\Hone{T}}^2
\lesssim
h + \norm{\bm(t_i)-\bm_h^i}{\Hone{D}}
\end{equation}
and
\begin{equation}\label{eq:A2}
\sum_{T\in\TT_h}
\norm{\bw_T}{\Hone{T}}^2
\lesssim
\norm{\bm(t_i)-\bm_h^i}{\Hone{D}}.
\end{equation}

To prove~\cref{eq:A1} we denote
$A := \sum_{T\in\TT_h} \norm{\Pi_\SS\bm_t(t_i)-\bw-\bw_T}{\Hone{T}}^2$
and use a standard inverse estimate 
and the equivalence~\cite[Lemma~3.2]{thanh}
to have
\begin{align*}
A
&\lesssim
h^{-2}
\sum_{T\in\TT_h}
\norm{\Pi_\SS\bm_t(t_i)-\bw-\bw_T}{\Ltwo{T}}^2
\\
&\simeq
h
\sum_{T\in\TT_h}
\sum_{z\in\NN_h\cap T}
\big|
\Pi_\SS\bm_t(t_i,z) - \bw(z) - \bw_T(z)
\big|^2.
\end{align*}
This together with~\cref{eq:mtz}--\cref{eq:bwT} and the regularity
assumption of~$\bm$ yields
\begin{align*}
A
&\lesssim
h
\sum_{T\in\TT_h}
\sum_{z\in\NN_h\cap T}
\left|
R_z^i
\Big(
\bm(t_i,z) - \bm_h^i(z) - \bm(t_i,z_T) + \bm_h^i(z_T)
\Big)
\right|^2
\\
&\lesssim
h
\sum_{T\in\TT_h}
\sum_{z\in\NN_h\cap T}
\big|
\bm(t_i,z) - \bm_h^i(z) - \bm(t_i,z_T) + \bm_h^i(z_T)
\big|^2
\\
&=
h
\sum_{T\in\TT_h}
\sum_{z\in\NN_h\cap T}
\left|
\big( \Pi_\SS\bm(t_i,z) - \bm_h^i(z) \big)
-
\big( \Pi_\SS\bm(t_i,z_T) - \bm_h^i(z_T) \big)
\right|^2.
\end{align*}
Since $\Pi_\SS\bm-\bm_h^i$ is polynomial of degree 1 on $T$, 
\Cref{lem:h1seminorm} in the Appendix gives
\begin{align*}
A
&\lesssim
\sum_{T\in\TT_h}
\norm{\nabla\big(\Pi_\SS\bm(t_i) - \bm_h^i\big)}{\Ltwo{T}}^2
=
\norm{\nabla\big(\Pi_\SS\bm(t_i) - \bm_h^i\big)}{\Ltwo{D}}^2
\\
&\le
\norm{\Pi_\SS\bm(t_i) - \bm_h^i}{\Hone{D}}^2.
\end{align*}
By using the triangle inequality and the approximation property of the
interpolation operator~$\Pi_\SS$, noting that $\bm\in L^\infty(0,T;{\mathbb
H}^2(D))$, we obtain~\cref{eq:A1}.

It remains to prove~\cref{eq:A2}. Denoting 
$\bvarphi(z) = \bm(t_i,z)-\bm_h^i(z)$ for $z\in\NN_h\cap T$,
it follows successively from~\cite[Lemma~3.2]{thanh}, 
\Cref{lem:h1seminorm},~\cref{eq:bwT} and~\cref{eq:Rzi} that
\begin{align}\label{eq:wT1}
\norm{\bw_T}{\Hone{T}}^2
&\simeq
h^3
\sum_{z\in\NN_h\cap T}
|\bw_T(z)|^2
+
h
\sum_{z\in\NN_h\cap T}
|\bw_T(z)-\bw_T(z_T)|^2
\nonumber
\\
&=
h^3
\sum_{z\in\NN_h\cap T}
|R_z^i\bvarphi(z_T)|^2
+
h
\sum_{z\in\NN_h\cap T}
\left|
\big(R_z^i-R_{z_T}^i\big)
\bvarphi(z_T)
\right|^2
\nonumber
\\
&\lesssim
h^3
\sum_{z\in\NN_h\cap T}
|\bvarphi(z_T)|^2
+
h
\sum_{z\in\NN_h\cap T}
\left|
\big(R_z^i-R_{z_T}^i\big)
\bvarphi(z_T)
\right|^2.
\end{align}
For the term in the last sum on the right-hand side, we use the 
triangle inequality and the regularity of~$\bm$ to obtain
\begin{align*}
\left|
\big(R_z^i-R_{z_T}^i\big)
\bvarphi(z_T)
\right|^2
&\le
|\bvarphi(z_T)|^2
\left|
\bm(t_i,z)\times\bm_t(t_i,z)
-
\bm(t_i,z_T)\times\bm_t(t_i,z_T)
\right|^2
\\
&\lesssim
|\bvarphi(z_T)|^2
|\bm(t_i,z)|^2
|\bm_t(t_i,z)-\bm_t(t_i,z_T)|^2
\\
&\qquad+
|\bvarphi(z_T)|^2
|\bm_t(t_i,z_T)|^2
\big(
|\bm(t_i,z)-\bm(t_i,z_T)|^2
\\
&\lesssim
|\bvarphi(z_T)|^2
\big(
|\bm_t(t_i,z)-\bm_t(t_i,z_T)|^2
+
|\bm(t_i,z)-\bm(t_i,z_T)|^2
\big)
\\
&\lesssim
h^2 |\bvarphi(z_T)|^2,
\end{align*}
where in the last step we used also Taylor's Theorem and
$|z-z_T|\le h$, noting that~$\nabla\bm\in\L^{\infty}(D_T)$
and~$\nabla\bm_t\in\L^{\infty}(D_T)$.
Therefore,~\cref{eq:wT1}, \cref{eq:z0} and~\cite[Lemma~3.2]{thanh} imply
\[
\norm{\bw_T}{\Hone{T}}^2
\lesssim
h^3
\sum_{z\in\NN_h\cap T}
|\bvarphi(z_T)|^2
\le
h^3
\sum_{z\in\NN_h\cap T}
|\bvarphi(z)|^2
\simeq
\norm{\bm(t_i)-\bm_h^i}{\Ltwo{T}}^2.
\]
Summing over~$T\in\TT_h$ we obtain~\cref{eq:A2}, completing the proof of
the lemma.
\end{proof}

The following lemma shows that for the pure LLG equation,
$\bm_t(t_i)$ solves the same equation as
$\bv_h^i$, up to an error term.
\begin{lemma}\label{lem:help}
Let $\bm$ denote a strong solution
of~\cref{eq:llg},~\cref{eq:con1}--\cref{eq:con3} which satisfies
$\nabla\bm \in {\mathbb L}^\infty(D_T)$ and $\bm_t\in
L^{\infty}(0,T;\H^2(D))$. Then, for $i=0,\ldots,N$ there holds
\begin{align}\label{eq:help}
\alpha\dual{\bm_t(t_i)}{\bphi_h}_D
&+
\dual{(\bm(t_i)\times\bm_t(t_i))}{\bphi_h}_D
+ 
C_e \theta k
\dual{\nabla\bm_t(t_i)}{\nabla \bphi_h}_D
\nonumber \\
&=
-C_e \dual{\nabla \bm(t_i)}{\nabla \bphi_h}_D
+
R(\bphi_h)
\quad\forall\bphi_h\in \KK_{\bm_h^i},
  \end{align}
where
\begin{equation}\label{eq:Rphih}
|R(\bphi_h)|
\leq 
C_{\rm R}
\big(h+\theta k+\norm{\bm(t_i)-\bm_h^i}{\Ltwo{D}}\big)
\norm{\bphi_h}{\Ltwo{D}}.
\end{equation}
Here, the constant $C_{\rm R}>0$ depends only on the regularity assumptions
on $\bm$ and the shape regularity of~$\TT_h$.
\end{lemma}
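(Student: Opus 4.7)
Starting from the pointwise strong form $\alpha\bm_t+\bm\times\bm_t = C_e\Delta\bm-C_e(\bm\cdot\Delta\bm)\bm$ of LLG (which follows from $|\bm|=1$ and~\cref{eq:llg2} with $\bH_{\rm eff}=C_e\Delta\bm$), I take the $\Ltwo{D}$-inner product with $\bphi_h\in\KK_{\bm_h^i}$, integrate by parts on the Laplacian using the Neumann boundary condition $\partial_n\bm=0$ from~\cref{eq:con1}, and then add the consistency term $C_e\theta k\dual{\nabla\bm_t(t_i)}{\nabla\bphi_h}_D$ to both sides. This produces exactly~\cref{eq:help} with
\[
R(\bphi_h)=-C_e\dual{(\bm(t_i)\cdot\Delta\bm(t_i))\bm(t_i)}{\bphi_h}_D+C_e\theta k\dual{\nabla\bm_t(t_i)}{\nabla\bphi_h}_D,
\]
so the work consists entirely in bounding these two contributions by $C_{\rm R}(h+\theta k+\norm{\bm(t_i)-\bm_h^i}{\Ltwo{D}})\norm{\bphi_h}{\Ltwo{D}}$.

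The time term is easy: differentiating $\partial_n\bm=0$ in time gives $\partial_n\bm_t=0$, so a second integration by parts yields $-C_e\theta k\dual{\Delta\bm_t(t_i)}{\bphi_h}_D$, which is controlled by $C\theta k\norm{\bm_t}{L^\infty(0,T;\H^2(D))}\norm{\bphi_h}{\Ltwo{D}}$ thanks to the regularity hypothesis on $\bm_t$. The crucial observation for the spatial term is the identity $\bm\cdot\Delta\bm=-|\nabla\bm|^2$, obtained by differentiating $|\bm|^2=1$ twice. This upgrades the a priori only-$L^2$ factor $\bm\cdot\Delta\bm$ to an $L^\infty$ factor with norm bounded by $\norm{\nabla\bm}{\L^\infty(D_T)}^2\leq C_{\rm reg}^2$. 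Splitting
\[
\bm(t_i)\cdot\bphi_h=(\bm(t_i)-\bm_h^i)\cdot\bphi_h+\bm_h^i\cdot\bphi_h,
\]
the first piece is immediately bounded by $C_{\rm reg}^2\norm{\bm(t_i)-\bm_h^i}{\Ltwo{D}}\norm{\bphi_h}{\Ltwo{D}}$, which contributes the third term in the stated estimate.

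The main obstacle is the remaining piece $\dual{|\nabla\bm|^2\bm_h^i}{\bphi_h}_D$, which must be shown to be $\mathcal{O}(h\norm{\bphi_h}{\Ltwo{D}})$. Here I exploit that $\bphi_h\in\KK_{\bm_h^i}$ makes $\bm_h^i\cdot\bphi_h$ a piecewise quadratic that vanishes at every node of $\TT_h$. Expanding in the nodal basis on a tetrahedron $\tau$ with vertices $z_0,\dots,z_3$ and using $\bphi_h(z_j)\cdot\bm_h^i(z_j)=0$, the diagonal terms cancel and one finds
\[
\bm_h^i\cdot\bphi_h\big|_\tau=-\sum_{j<k}\bigl[(\bm_h^i(z_k)-\bm_h^i(z_j))\cdot(\bphi_h(z_k)-\bphi_h(z_j))\bigr]\phi_{z_j}\phi_{z_k}.
\]
Since $\bm_h^i,\bphi_h$ are affine on $\tau$, each factor in brackets is bounded by $h^2|\nabla\bm_h^i|_\tau|\,|\nabla\bphi_h|_\tau|$; after multiplying by $|\nabla\bm|^2\in L^\infty$, integrating, summing over $\tau$, and applying the inverse estimate $\norm{\nabla\bphi_h}{\Ltwo{D}}\lesssim h^{-1}\norm{\bphi_h}{\Ltwo{D}}$, this contribution is bounded by $C\,h\,\norm{\nabla\bm_h^i}{\Ltwo{D}}\norm{\bphi_h}{\Ltwo{D}}$. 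The factor $\norm{\nabla\bm_h^i}{\Ltwo{D}}$ is absorbed into $C_{\rm R}$ by invoking the energy bound~\cref{eq:denergy} (or, equivalently, bounding $\norm{\nabla\bm_h^i}{\Ltwo{D}}\leq\norm{\nabla\bm(t_i)}{\Ltwo{D}}+\norm{\nabla(\bm(t_i)-\bm_h^i)}{\Ltwo{D}}$, where the induction in the proof of Theorem~\ref{thm:strongconvLLG} keeps the second summand uniformly controlled), which depends only on the regularity of $\bm$, shape regularity of $\TT_h$, and the initial data.
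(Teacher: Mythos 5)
Your proposal is correct and follows essentially the same route as the paper: both start from the strong form and the identity $\bm\cdot\Delta\bm=-|\nabla\bm|^2$, integrate by parts via $\partial_n\bm=\partial_n\bm_t=0$, split $\bm(t_i)\cdot\bphi_h=(\bm(t_i)-\bm_h^i)\cdot\bphi_h+\bm_h^i\cdot\bphi_h$, exploit that $\bm_h^i\cdot\bphi_h$ is a piecewise quadratic vanishing at all nodes, and finish with the inverse estimate plus the discrete energy bound. The only technical divergence is how you reach $\norm{\bm_h^i\cdot\bphi_h}{L^1(D)}\lesssim h^2\norm{\nabla\bm_h^i}{\Ltwo{D}}\norm{\nabla\bphi_h}{\Ltwo{D}}$: the paper scales to the reference element and invokes norm equivalence of the $D^2$-seminorm on the finite-dimensional space of such quadratics, then notes $|D^2(\bm_h^i\cdot\bphi_h)|\lesssim|\nabla\bm_h^i||\nabla\bphi_h|$, whereas you expand explicitly in the nodal basis, observe that the constraint kills the diagonal terms, and bound each off-diagonal coefficient $(\bm_h^i(z_k)-\bm_h^i(z_j))\cdot(\bphi_h(z_k)-\bphi_h(z_j))$ directly. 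Both produce the identical intermediate inequality; your explicit version is arguably more elementary and self-contained, at the cost of being slightly longer. One caution: of your two suggested ways to control $\norm{\nabla\bm_h^i}{\Ltwo{D}}$, the direct appeal to the energy bound~\cref{eq:denergy} is the clean option used in the paper; the alternative you mention (absorbing it into the Gronwall iteration of Theorem~\ref{thm:strongconvLLG}) is circular-looking, since this lemma feeds into that theorem, and would need to be phrased as a bootstrap rather than a standalone bound.
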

\begin{proof}
Note that $|\bm|=1$ implies $(\bm\cdot\Delta \bm)=-|\nabla\bm|^2$.
Recalling that~\cref{eq:llg} is equivalent to~\cref{eq:llg2}, this
identity and~\cref{eq:llg2} give, for all $\bphi_h\in \KK_{\bm_h^i}$,
\begin{align}\label{eq:mt phih}
   \alpha\dual{\bm_t(t_i)}{\bphi_h}_{D} &+ \dual{(\bm(t_i)\times
\bm_t(t_i))}{\bphi_h}_{D}\nonumber\\
   &=C_e \dual{\Delta
\bm(t_i)}{\bphi_h}_{D}+C_e\dual{|\nabla\bm(t_i)|^2\bm(t_i)}{\bphi_h}_D\nonumber\\
   &=-C_e \dual{\nabla \bm(t_i)}{\nabla
\bphi_h}_{D}+C_e\dual{|\nabla\bm(t_i)|^2\bm(t_i)}{\bphi_h}_D
  \end{align}
where in the last step we used~\cref{eq:con1} and integration by
parts. Hence~\cref{eq:help} holds with
\begin{equation}\label{eq:Rphih2}
R(\bphi_h)
:=
C_e \theta k
\dual{\nabla\bm_t(t_i)}{\nabla \bphi_h}_D
+
C_e\dual{|\nabla\bm(t_i)|^2\bm(t_i)}{\bphi_h}_D.
\end{equation}
It remains to show~\cref{eq:Rphih}.
The condition~\cref{eq:con1} implies $\partial_n\bm_t=0$ on
$\Gamma_T$, and thus the first term on the right-hand side
of~\cref{eq:Rphih2} can be estimated as
\begin{align}\label{eq:B1}
C_e\theta k|\dual{ \nabla \bm_t(t_i)}{\nabla \bphi_h}_D|
&\lesssim 
\theta k \norm{\Delta\bm_t(t_i)}{\Ltwo{D}}\norm{\bphi_h}{\Ltwo{D}}
\lesssim
\theta k \norm{\bphi_h}{\Ltwo{D}},
\end{align}
where in the last step we used the fact that~$\bm_t\in
L^\infty(0,T;\H^2(D))$.
The second term on the right-hand side of~\cref{eq:Rphih2} can be
estimated as
 \begin{align}\label{eq:inter0}
\left|\dual{|\nabla\bm(t_i)|^2\bm(t_i)}{\bphi_h}_D\right|
 &\leq
\norm{\nabla\bm}{L^\infty(D_T)}^2
\norm{\bm(t_i)\cdot\bphi_h}{L^1(D)}
\nonumber \\
&\lesssim
\norm{\big(\bm(t_i)-\bm_h^i\big)\cdot\bphi_h}{L^1(D)}
+
\norm{\bm_h^i\cdot\bphi_h}{L^1(D)}.
\nonumber \\
&\lesssim
\norm{\bm(t_i)-\bm_h^i}{\Ltwo{D}}\norm{\bphi_h}{\Ltwo{D}}
+
\norm{\bm_h^i\cdot\bphi_h}{L^1(D)}.
 \end{align}
Since $\bm_h^i\cdot\bphi_h$ is a quadratic function on each
$T\in\TT_h$ which vanishes at every node in~$T$, the
semi-norm~$\norm{D^2(\bm_h^i\cdot\bphi_h)}{L^1(T)}$ is a norm,
where $D^2$ is the partial derivative operator of order~2.
If $\widehat T$ is the reference element, then a scaling argument 
and norm equivalence on finite dimensional spaces give
  \begin{align}\label{eq:inter}
  \norm{\bm_h^i\cdot\bphi_h}{L^1(D)}&=\sum_{T\in\TT_h}\norm{\bm_h^i\cdot\bphi_h}{L^1(T)}\simeq
h^3\sum_{T\in\TT_h}\norm{\widehat
\bm_h^i\cdot\widehat\bphi_h}{L^1(\widehat T)}\nonumber\\
  &\simeq h^3\sum_{T\in\TT_h}\norm{D^2(\widehat \bm_h^i\cdot\widehat\bphi_h)}{L^1(\widehat T)}\simeq h^2\sum_{T\in\TT_h}\norm{D^2(\bm_h^i\cdot\bphi_h)}{L^1( T)}.
  \end{align}
  Let $\partial_i$, $i=1,2,3$, denote the directional derivatives
in $\R^3$. Since $\bm_h^i$ and $\bphi_h$ are polynomials of
degree~1 on~$T$, there holds
  \begin{align*}
  |\partial_i\partial_j(\bm_h^i\cdot\bphi_h)|&=|\partial_i((\partial_j\bm_h^i)\cdot\bphi_h+\bm_h^i\cdot(\partial_j\bphi_h))|\\
  &=|(\partial_j\bm_h^i)\cdot(\partial_i\bphi_h)+(\partial_i\bm_h^i)\cdot(\partial_j\bphi_h)|\leq
2|\nabla\bm_h^i||\nabla\bphi_h|,
  \end{align*}
implying 
$|D^2(\bm_h^i\cdot\bphi_h)|\lesssim |\nabla\bm_h^i||\nabla\bphi_h|$.
This and~\cref{eq:inter} yield
  \begin{align}\label{eq:zeros}
  \norm{\bm_h^i\cdot\bphi_h}{L^1(D)}&\lesssim
h^2\sum_{T\in\TT_h}\norm{|\nabla\bm_h^i||\nabla\bphi_h|}{L^1(
T)}=h^2\norm{|\nabla\bm_h^i||\nabla\bphi_h|}{L^1( D)}\nonumber\\
  &\leq h^2 \norm{\nabla\bm_h^i}{\Ltwo{D}}\norm{\nabla\bphi_h}{\Ltwo{D}}
\lesssim
h \norm{\bphi_h}{\Ltwo{D}},
  \end{align}
where in the last step we used the energy bound~\cref{eq:denergy},
and a standard inverse estimate.
Estimate~\cref{eq:Rphih} now follows
from~\cref{eq:Rphih2}--\cref{eq:inter0} and~\cref{eq:zeros},
completing the proof of the lemma.
\end{proof}
\begin{remark}\label{rem:reg}
 It is noted that the assumption $\nabla\bm\in\L^\infty(D_T)$ 
can be replaced by $\nabla\bm\in L^\infty(0,T;\L^4(D))$ to
obtain a weaker bound
 \begin{align*}
  |R(\bphi_h)|
\lesssim \theta k\norm{\bphi_h}{\Ltwo{D}}+\big({h}+\norm{\bm(t_i)-\bm_h^i}{\Hone{D}}\big)
\norm{\bphi_h}{\Hone{D}}.
 \end{align*}
This can be done by use of the continuous embedding
$\Hone{D}\to \L^4(D)$ in~\cref{eq:inter0} and obvious
modifications in the remainder of the proof.
With straightforward modifications, the proof of \Cref{lem:mt vhi} is still valid to prove a weaker estimate with $k$ instead of $k^2$ on the right-hand side of~\cref{eq:mt vhi}.
This eventually results in a reduced rate of convergence $\sqrt{k}$ in \Cref{thm:strongconvLLG}. Analogous arguments hold true for the corresponding results in \Cref{thm:strongconvELLG}.
\end{remark}

As a consequence of the above lemma, we can estimate the approximation of
$\bm_t(t_i)$ by~$\bv_h^i$ as follows.

\begin{lemma}\label{lem:mt vhi}
Under the assumptions of Lemmas~\ref{lem:orthoproj} and~\ref{lem:help} there
holds, with $1/2<\theta\le1$,
\begin{align}\label{eq:mt vhi}
\begin{split}
\frac{\alpha}{C_e}&
\norm{\bm_t(t_i) - \bv_h^i}{\Ltwo{D}}^2
+
k
\norm{\nabla\bm_t(t_i) - \nabla\bv_h^i}{\Ltwo{D}}^2
\\
&+
2
\dual{\nabla\bm(t_i)-\nabla\bm_h^i}{\nabla\bm_t(t_i)-\nabla\bv_h^i}
\\
&\qquad\qquad\qquad\qquad\le
C_{\bm}
\left(
h^2 + k^2 +
\norm{\bm(t_i) - \bm_h^i}{\H^1(D)}^2
\right).
\end{split}
\end{align}
\end{lemma}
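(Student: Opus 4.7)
The plan is to subtract~\cref{eq:dllg} from the identity in~\Cref{lem:help}, so that for every $\bphi_h\in\KK_{\bm_h^i}$
\begin{align*}
\alpha\dual{\be}{\bphi_h}_D
&+ \dual{(\bm(t_i)-\bm_h^i)\times\bm_t(t_i) + \bm_h^i\times\be}{\bphi_h}_D \\
&+ C_e\theta k\dual{\nabla\be}{\nabla\bphi_h}_D
+ C_e\dual{\nabla(\bm(t_i)-\bm_h^i)}{\nabla\bphi_h}_D
= R(\bphi_h),
\end{align*}
where $\be:=\bm_t(t_i)-\bv_h^i$ and we have used the decomposition $\bm(t_i)\times\bm_t(t_i) - \bm_h^i\times\bv_h^i = (\bm(t_i)-\bm_h^i)\times\bm_t(t_i) + \bm_h^i\times\be$. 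Since $\bm_t(t_i)$ does not in general lie in $\KK_{\bm_h^i}$, one cannot simply test with $\be$. Instead I would set $\bphi_h:=\P_h^i\bm_t(t_i)-\bv_h^i\in\KK_{\bm_h^i}$ and write $\bphi_h=\be-\delta$, where $\delta:=\bm_t(t_i)-\P_h^i\bm_t(t_i)$ is controlled by~\Cref{lem:orthoproj}, namely $\norm{\delta}{\Hone{D}}\lesssim h+\norm{\bm(t_i)-\bm_h^i}{\Hone{D}}$.

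Substituting $\bphi_h=\be-\delta$ produces the three structural terms $\alpha\norm{\be}{\Ltwo{D}}^2$, $C_e\theta k\norm{\nabla\be}{\Ltwo{D}}^2$ and $C_e\dual{\nabla(\bm(t_i)-\bm_h^i)}{\nabla\be}_D$ on the left-hand side, with all remaining contributions either containing a factor of~$\delta$ or arising from $R(\bphi_h)$. The pointwise triple-product identity $\ba\cdot(\bb\times\ba)=0$ applied to $\bm_h^i\times\be$ yields $\dual{\bm_h^i\times\be}{\be}_D=0$, so that only $-\dual{\bm_h^i\times\be}{\delta}_D$ survives and we never need $\bphi_h$ to dominate $\be$ in any norm. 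Multiplying the resulting identity by $2/C_e$ and invoking $\theta>1/2$ to split $2\theta k = k + (2\theta-1)k$ yields exactly the three target quantities on the left, with a surplus $(2\theta-1)k\norm{\nabla\be}{\Ltwo{D}}^2$ available to absorb Young-splittings of $2\theta k\dual{\nabla\be}{\nabla\delta}_D$.

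The remaining error terms are then bounded by standard Cauchy--Schwarz/Young steps: the regularity $\bm_t\in L^\infty(0,T;\L^\infty(D))$ handles $\dual{(\bm(t_i)-\bm_h^i)\times\bm_t(t_i)}{\bphi_h}_D$; the bound on $R(\bphi_h)$ in~\cref{eq:Rphih} absorbs the source with a factor $(h+\theta k+\norm{\bm(t_i)-\bm_h^i}{\Ltwo{D}})\norm{\bphi_h}{\Ltwo{D}}$; and the term $C_e\dual{\nabla(\bm(t_i)-\bm_h^i)}{\nabla\delta}_D$ is controlled by $\norm{\nabla\delta}{\Ltwo{D}}$ together with Lemma~\ref{lem:orthoproj}. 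The bound on $\delta$ contributes precisely $h^2+\norm{\bm(t_i)-\bm_h^i}{\Hone{D}}^2$, while $R$ contributes the $k^2$ term, which together yield the right-hand side of~\cref{eq:mt vhi}.

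The main obstacle is the cross-product term $\dual{\bm_h^i\times\be}{\delta}_D$: in the non-projecting tangent-plane scheme $\bm_h^i$ is not a priori bounded in~$\L^\infty(D)$. I would resolve this by splitting $\bm_h^i=\bm(t_i)+(\bm_h^i-\bm(t_i))$; the first contribution is handled via $\bm\in\W^{1,\infty}(D)$, while the second is estimated using H\"older with $1/3+1/2+1/6=1$, the embedding $\Hone{D}\hookrightarrow\L^6(D)$ applied to $\delta$, and the interpolation $\norm{\bm_h^i-\bm(t_i)}{\L^3(D)}\lesssim\norm{\bm_h^i-\bm(t_i)}{\Hone{D}}$. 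Combined with the uniform energy bound of~\Cref{lem:denergygen} on $\norm{\bm_h^i}{\Hone{D}}$, this produces a term of the form $\varepsilon\norm{\be}{\Ltwo{D}}^2 + C\bigl(h^2+\norm{\bm(t_i)-\bm_h^i}{\Hone{D}}^2\bigr)$, which is absorbed into the left-hand side (choosing $\varepsilon$ small relative to $\alpha/C_e$) and into the target right-hand side, respectively.
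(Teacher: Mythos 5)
Your proposal is correct and follows essentially the same route as the paper: subtract the discrete scheme~\cref{eq:dllg} from the consistency identity~\cref{eq:help}, test with $\bphi_h=\P_h^i\bm_t(t_i)-\bv_h^i$, exploit the skew-symmetry $\dual{\bm_h^i\times\be}{\be}_D=0$, control the projection residual $\delta$ via \Cref{lem:orthoproj}, and absorb with Young using $\theta>1/2$. The only cosmetic difference is that you estimate the residual cross-product term $\dual{\bm_h^i\times\be}{\delta}_D$ directly via H\"older with exponents $1/3+1/2+1/6$ and Sobolev embedding, whereas the paper packages the same argument through the $\widetilde\H^{-1}(D)$--$\H^{1/2}(D)$ duality estimate of \Cref{lem:h}, which is itself proved by exactly that H\"older/interpolation argument.
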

\begin{proof}
Subtracting~\cref{eq:dllg} from~\cref{eq:help} we obtain
\begin{align*}
\alpha\dual{\bm_t(t_i)-\bv_h^i}{\bphi_h}_D
&+ 
C_e\theta k\dual{\nabla\bm_t(t_i)-\nabla\bv_h^i}{\nabla\bphi_h}_D
\\
+
C_e&\dual{\nabla\bm(t_i)-\nabla\bm_h^i}{\nabla\bphi_h}_D
\\
&=
\dual{\bm_h^i\times\bv_h^i-\bm(t_i)\times\bm_t(t_i)}{\bphi_h}_D+R(\bphi_h),
\quad\bphi_h\in\KK_{\bm_h^i}.
\end{align*}
Using the above equation, writing $\bphi := \bm_t(t_i) - \bv_h^i$ 
and referring to the left-hand side
of~\cref{eq:mt vhi} we try to estimate
\begin{align}\label{eq:phi nab phi}
\begin{split}
\alpha
&\norm{\bphi}{\Ltwo{D}}^2
+
C_e\theta k
\norm{\nabla\bphi}{\Ltwo{D}}^2
+
C_e
\dual{\nabla\bm(t_i)-\nabla\bm_h^i}{\nabla\bphi}
\\
&=
\alpha
\big(
\norm{\bphi}{\Ltwo{D}}^2
-
\dual{\bphi}{\bphi_h}_D 
\big)
+
C_e\theta k
\big(
\norm{\nabla\bphi}{\Ltwo{D}}^2
-
\dual{\nabla\bphi}{\nabla\bphi_h}_D 
\big)
\\
&\quad 
+
C_e
\dual{\nabla\bm(t_i)-\nabla\bm_h^i}{\nabla\bphi-\nabla\bphi_h}
\\
&\quad+
\dual{\bm_h^i\times\bv_h^i-\bm(t_i)\times\bm_t(t_i)}{\bphi_h}_D
+
R(\bphi_h)
\\
&
=: T_1 + \cdots + T_5.
\end{split}
\end{align}
Choosing~$\bphi_h=\P_h^i\bm_t(t_i)-\bv_h^i\in\KK_{\bm_h^i}$
where~$\P_h^i$
is defined in \Cref{lem:orthoproj}, we deduce from that lemma that
\begin{align}\label{eq:ests1}
\begin{split}
|T_1|
&\lesssim
\norm{\bphi}{\Ltwo{D}}
\norm{\bphi-\bphi_h}{\Ltwo{D}}
\\
&\lesssim
\norm{\bphi}{\Ltwo{D}}
\big(h+\norm{\bm(t_i)-\bm_h^i}{\Hone{D}}\big),\\
|T_2|
&\lesssim
k
\norm{\nabla\bphi}{\Ltwo{D}}
\norm{\nabla\bphi-\nabla\bphi_h}{\Ltwo{D}}
\\
&\lesssim
k
\norm{\nabla\bphi}{\Ltwo{D}}
\big(h+\norm{\bm(t_i)-\bm_h^i}{\Hone{D}}\big),\\
|T_3|
&\lesssim
\norm{\nabla\bm(t_i)-\nabla\bm_h^i}{\Ltwo{D}}
\norm{\nabla\bphi-\nabla\bphi_h}{\Ltwo{D}}
\\
&\lesssim
\norm{\nabla\bm(t_i)-\nabla\bm_h^i}{\Ltwo{D}}
\big(h+\norm{\bm(t_i)-\bm_h^i}{\Hone{D}}\big)
\\
&\lesssim
h^2
+
\norm{\bm(t_i)-\bm_h^i}{\Hone{D}}^2.
\end{split}
\end{align}
For the term $T_4$ since $\dual{\bm_h^i\times \bphi}{\bphi}_D=0$
implies
$\dual{\bm_h^i\times \bv_h^i}{\bphi}_D =
\dual{\bm_h^i\times \bm_t(t_i)}{\bphi}_D$
we deduce, with the help of \Cref{lem:orthoproj} again,
\begin{align*}
|T_4|
&\le
|\dual{\bm_h^i\times\bv_h^i-\bm(t_i)\times\bm_t(t_i)}{\bphi_h-\bphi}_D|
\\
&\qquad+
|\dual{\bm_h^i\times\bv_h^i-\bm(t_i)\times\bm_t(t_i)}{\bphi}_D|
\nonumber \\
&=
|\dual{\bm_h^i\times\bv_h^i-\bm(t_i)\times\bm_t(t_i)}{\bphi_h-\bphi}_D|
+
|\dual{(\bm_h^i -\bm(t_i))\times\bm_t(t_i)}{\bphi}_D|
\nonumber \\
&\lesssim
\norm{\bm_h^i\times\bv_h^i-\bm(t_i)\times\bm_t(t_i)}{\widetilde\H^{-1}(D)}
\norm{\bphi-\bphi_h}{\Hone{D}}
\\
&\qquad +
\norm{\bm_h^i-\bm(t_i)}{\Ltwo{D}}\norm{\bphi}{\Ltwo{D}}
\nonumber \\
&\lesssim
\norm{\bm_h^i\times\bv_h^i-\bm(t_i)\times\bm_t(t_i)}{\widetilde\H^{-1}(D)}
\big(h+\norm{\bm(t_i)-\bm_h^i}{\Hone{D}}\big)
\nonumber \\
&\quad+
\norm{\bm_h^i-\bm(t_i)}{\Ltwo{D}}\norm{\bphi}{\Ltwo{D}}.
\end{align*}
\Cref{lem:h} implies
\begin{align*}
 \norm{\bm_h^i&\times\bv_h^i-\bm(t_i)\times\bm_t(t_i)}{\widetilde\H^{-1}(D)}\\
 &\leq \norm{\bm_h^i\times(\bv_h^i-\bm_t(t_i))}{\widetilde\H^{-1}(D)}
 +\norm{(\bm_h^i-\bm(t_i))\times\bm_t(t_i)}{\widetilde\H^{-1}(D)}\\
 &\lesssim\norm{\bm_h^i}{\H^{1}(D)}\norm{\bm_t(t_i)-\bv_h^i}{\Ltwo{D}}
 +\norm{\bm_t(t_i)}{\Ltwo{D}}\norm{\bm(t_i)-\bm_h^i}{\Hone{D}}\\
 &\lesssim 
 \norm{\bphi}{\Ltwo{D}}
 +\norm{\bm(t_i)-\bm_h^i}{\Hone{D}},
\end{align*}
where in the last step we used the regularity of $\bm_t$ and the
bound~\cref{eq:const} and~\cref{eq:denergy}. 
Therefore
\begin{align}\label{eq:T4}
|T_4|
&\lesssim
\big(
\norm{\bphi}{\Ltwo{D}}
+
\norm{\bm(t_i)-\bm_h^i}{\Hone{D}}
\big)
\big(h+\norm{\bm(t_i)-\bm_h^i}{\Hone{D}}\big)
\nonumber \\
&\lesssim
\norm{\bphi}{\Ltwo{D}}
\big(h+\norm{\bm(t_i)-\bm_h^i}{\Hone{D}}\big)
+
h^2
+
\norm{\bm(t_i)-\bm_h^i}{\Hone{D}}^2.
\end{align}
Finally, for~$T_5$, \Cref{lem:help}, the triangle inequality,
and \Cref{lem:orthoproj} give
\begin{align}\label{eq:T5}
|T_5|
&\lesssim 
\big(h+k+\norm{\bm(t_i)-\bm_h^i}{\Ltwo{D}}\big)
\norm{\bphi_h}{\Ltwo{D}}
\nonumber\\
&\lesssim 
\big(h+k+\norm{\bm(t_i)-\bm_h^i}{\Ltwo{D}}\big)
\norm{\bphi}{\Ltwo{D}}
\nonumber\\
&\quad+ 
\big(h+k+\norm{\bm(t_i)-\bm_h^i}{\Ltwo{D}}\big)
\big(h+\norm{\bm(t_i)-\bm_h^i}{\H^1(D)}\big)
\nonumber\\
&\lesssim 
\big(h+k+\norm{\bm(t_i)-\bm_h^i}{\Ltwo{D}}\big)
\norm{\bphi}{\Ltwo{D}}
+
h^2 + k^2
+
\norm{\bm(t_i)-\bm_h^i}{\H^1(D)}^2.
\end{align}
Altogether,~\cref{eq:phi nab phi}--\cref{eq:T5} yield, for any
$\epsilon>0$,
\begin{align*}
\frac{\alpha}{C_e}
&
\norm{\bphi}{\Ltwo{D}}^2
+
\theta k
\norm{\nabla\bphi}{\Ltwo{D}}^2
+
\dual{\nabla\bm(t_i)-\nabla\bm_h^i}{\nabla\bphi}
\\
&\lesssim
\big(
\norm{\bphi}{\Ltwo{D}}
+
k
\norm{\nabla\bphi}{\Ltwo{D}}
\big)
\big(h+k+\norm{\bm(t_i)-\bm_h^i}{\H^1(D)}\big)
+ h^2 + k^2
\\
&\quad+
\norm{\bm(t_i)-\bm_h^i}{\Hone{D}}^2
\\
&\le
\epsilon
\norm{\bphi}{\Ltwo{D}}^2
+
\epsilon k^2
\norm{\nabla\bphi}{\Ltwo{D}}^2
+
(1+\epsilon^{-1})
\big(h^2+k^2+\norm{\bm(t_i)-\bm_h^i}{\H^1(D)}^2\big).
\end{align*}
The required estimate~\cref{eq:mt vhi} is obtained for
$\epsilon=\min\{\alpha/(2C_e),\theta-1/2\}$.
\end{proof}

Three more lemmas are required for the proof of
\Cref{thm:strongconvELLG}.
Analogously to \Cref{lem:help} we show that for the ELLG system,
$\bm_t(t_i)$ solves the same equation as~$\bv_h^i$, up to an error term.
\begin{lemma}\label{lem:help2}
Let $(\bm,\bH,\lambda)$ denote a strong solution of ELLG
 which satisfies 
\begin{align*}
\bm &\in L^{\infty}(0,T;\W^{1,\infty}(D)),
\\
\bm_t &\in L^{\infty}(0,T;\H^{2}(D)),
\\
\bH &\in \L^\infty(D_T). 
\end{align*}
Then, for $i=0,\ldots,N$ there holds
  \begin{align}\label{eq:help2}
   \alpha\dual{\bm_t(t_i)}{\bphi_h}_{D} &+
\dual{(\bm(t_i)\times\bm_t(t_i))}{\bphi_h}_{D}
+
C_e \theta k\dual{\nabla\bm_t(t_i)}{\nabla \bphi_h}_D
\nonumber \\
   &=-C_e \dual{\nabla \bm(t_i)}{\nabla \bphi_h}_{D}
   +\dual{\bH(t_i)}{\bphi_h}_D+\widetilde R(\bphi_h)
\quad\forall
\bphi_h\in \KK_{\bm_h^i},
  \end{align}
where
\begin{equation}\label{eq:Rphih3}
|\widetilde R(\bphi_h)|
\leq 
C_{\rm \widetilde R}
\big(
h+\theta k +
\norm{\bm(t_i)-\bm_h^i}{\Ltwo{D}}
\big)
\norm{\bphi_h}{\Ltwo{D}}.
\end{equation}
Here, the constant $C_{\rm \widetilde R}>0$ depends only on
the regularity of~$\bm$ and~$\bH$, and the shape regularity of~$\TT_h$.
\end{lemma}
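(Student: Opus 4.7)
The plan is to mirror the proof of~\Cref{lem:help}, adapting it to the effective field $\bH_{\rm eff}=C_e\Delta\bm+\bH$ of the ELLG system rather than the pure LLG choice $\bH_{\rm eff}=C_e\Delta\bm$. Starting from the equivalent form~\cref{eq:llg2} and using $|\bm|=1$ together with the identity $\bm\cdot\Delta\bm=-|\nabla\bm|^2$, I would rewrite~\cref{eq:llg2} as
\begin{align*}
\alpha\bm_t+\bm\times\bm_t=C_e\Delta\bm+C_e|\nabla\bm|^2\bm+\bH-(\bm\cdot\bH)\bm.
\end{align*}
Testing against $\bphi_h\in\KK_{\bm_h^i}$, integrating by parts in the $\Delta\bm$ term (using $\partial_n\bm=0$ from~\cref{eq:con1}), and adding $C_e\theta k\dual{\nabla\bm_t(t_i)}{\nabla\bphi_h}_D$ on both sides to match the algebraic shape of~\cref{eq:dllg}, I arrive at~\cref{eq:help2} with
\begin{align*}
\widetilde R(\bphi_h)
=C_e\theta k\dual{\nabla\bm_t(t_i)}{\nabla\bphi_h}_D
+C_e\dual{|\nabla\bm(t_i)|^2\bm(t_i)}{\bphi_h}_D
-\dual{(\bm(t_i)\cdot\bH(t_i))\bm(t_i)}{\bphi_h}_D.
\end{align*}

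It then remains to establish~\cref{eq:Rphih3}. The first two contributions to $\widetilde R(\bphi_h)$ are exactly those already treated in \Cref{lem:help}: the first is controlled by $\theta k\,\|\bphi_h\|_{\Ltwo{D}}$ via integration by parts and the assumption $\bm_t\in L^\infty(0,T;\H^2(D))$ combined with $\partial_n\bm_t=0$; the second is controlled by $\bigl(h+\|\bm(t_i)-\bm_h^i\|_{\Ltwo{D}}\bigr)\|\bphi_h\|_{\Ltwo{D}}$ through the same argument that led to~\cref{eq:inter0}--\cref{eq:zeros}, namely splitting $\bm(t_i)=\bigl(\bm(t_i)-\bm_h^i\bigr)+\bm_h^i$ and exploiting that $\bm_h^i(z)\cdot\bphi_h(z)=0$ at every node together with the scaling/inverse-inequality estimate for $\|\bm_h^i\cdot\bphi_h\|_{L^1(D)}$.

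The only genuinely new ingredient is the last term, coming from the coupling with the magnetic field. Here I would use $\bH\in\L^\infty(D_T)$ to pull out $\|\bH(t_i)\|_{\L^\infty(D)}$ and then estimate
\begin{align*}
\bigl|\dual{(\bm(t_i)\cdot\bH(t_i))\bm(t_i)}{\bphi_h}_D\bigr|
\lesssim
\|\bH\|_{\L^\infty(D_T)}\,\|\bm(t_i)\cdot\bphi_h\|_{L^1(D)}.
\end{align*}
The quantity $\|\bm(t_i)\cdot\bphi_h\|_{L^1(D)}$ is then treated by the very same trick used for $\|\bm_h^i\cdot\bphi_h\|_{L^1(D)}$ in \Cref{lem:help}: write $\bm(t_i)=(\bm(t_i)-\bm_h^i)+\bm_h^i$, bound the first summand by $\|\bm(t_i)-\bm_h^i\|_{\Ltwo{D}}\|\bphi_h\|_{\Ltwo{D}}$ via Cauchy--Schwarz, and bound the second by $h\,\|\bphi_h\|_{\Ltwo{D}}$ via the nodal-vanishing/scaling argument culminating in~\cref{eq:zeros}, using the energy bound~\cref{eq:denergy} on $\|\nabla\bm_h^i\|_{\Ltwo{D}}$.

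Combining these three estimates yields~\cref{eq:Rphih3} with $C_{\widetilde R}$ depending on $\|\bm\|_{W^{1,\infty}(0,T;\W^{1,\infty}(D))}$, $\|\bm_t\|_{L^\infty(0,T;\H^2(D))}$, $\|\bH\|_{\L^\infty(D_T)}$, and the shape regularity of $\TT_h$. No real obstacle is expected; the extra $(\bm\cdot\bH)\bm$ term is precisely what the $\L^\infty(D_T)$ assumption on $\bH$ is tailored to handle, and the rest is verbatim from the proof of~\Cref{lem:help}.
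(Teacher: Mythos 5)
Your proposal matches the paper's argument essentially verbatim: the paper likewise reduces to $\widetilde R(\bphi_h)=R(\bphi_h)-\dual{(\bm(t_i)\cdot\bH(t_i))\bm(t_i)}{\bphi_h}_D$, bounds the new term by $\norm{\bm}{\L^\infty(D_T)}\norm{\bH}{\L^\infty(D_T)}\norm{\bm(t_i)\cdot\bphi_h}{L^1(D)}$, and then reuses the splitting $\bm(t_i)=(\bm(t_i)-\bm_h^i)+\bm_h^i$ together with the nodal-vanishing/scaling estimate from the proof of \Cref{lem:help}. Your write-up is simply a more explicit version of the paper's terse ``the proof follows exactly the same way; cf.~\cref{eq:inter0}.''
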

\begin{proof}
The proof is similar to that of \Cref{lem:help}. Instead
of~\cref{eq:mt phih} we now have
\begin{align*}
  \begin{split}
   \alpha\dual{\bm_t(t_i)}{\bphi_h}_{D} &+
\dual{(\bm(t_i)\times \bm_t(t_i))}{\bphi_h}_{D}\\
   &=-C_e \dual{\nabla \bm(t_i)}{\nabla
\bphi_h}_{D}+C_e\dual{|\nabla\bm(t_i)|^2\bm(t_i)}{\bphi_h}_D\\
   &\quad +
\dual{\bH(t_i)}{\bphi_h}_D
-
\dual{(\bm(t_i)\cdot\bH(t_i))\bm(t_i)}{\bphi_h}_D,
   \end{split}
  \end{align*}
and thus
\[
\widetilde R(\bphi_h)
:=
R(\bphi_h)
-
\dual{(\bm(t_i)\cdot\bH(t_i))\bm(t_i)}{\bphi_h}_D,
\]
where $R(\bphi_h)$ is given in~\cref{eq:Rphih2}.
Therefore, it suffices to estimate the term
$\dual{(\bm(t_i)\cdot\bH(t_i))\bm(t_i)}{\bphi_h}_D$. Since
 \begin{align*}
|\dual{(\bm(t_i)\cdot\bH(t_i))\bm(t_i)}{\bphi_h}_D|
&\leq  
\norm{\bm}{\L^\infty(D_T)}
\norm{\bH}{\L^\infty(D_T)}
\norm{\bm(t_i)\cdot\bphi_h}{L^1(D)},
\end{align*}
the proof follows exactly the same way as that of \Cref{lem:help};
cf.~\cref{eq:inter0}. Thus we prove~\cref{eq:Rphih3}.
\end{proof}
 
We will 
establish a recurrence estimate for the eddy
current part of the solution. We first introduce
\begin{equation}\label{eq:Ei ei}
\be_i:=\Pi_{\NN\DD}\bH(t_i)-\bH_h^{i},
\quad
f_i := \Pi_{\SS}\lambda(t_i)-\lambda_h^i,
\quad\text{and}\quad
\bE_i:=
(\be_i,f_i),
\end{equation}
where $\Pi_{\NN\DD}$ denotes the usual interpolation
operator onto $\NN\DD^1(\TT_h)$.


\begin{lemma}\label{lem:cea}
Let $(\bm,\bH,\lambda)$ be a strong solution of ELLG
which satisfies 
\begin{align*}
\bm &\in W^{1,\infty}(0,T;\Ltwo{D})\cap \W^{1,\infty}(D_T), \\
\bH &\in L^\infty(0,T; \H^2(D))
\cap W^{1,\infty}(0,T;\Hone{D})
\cap W^{2,\infty}(0,T;\Ltwo{D}), \\
\lambda &\in W^{1,\infty}(0,T; H^1(\Gamma))\text{ such
that }\dtn\lambda_t\in L^\infty(0,T; H^{1/2}_{\comment{\rm pw}}(\Gamma)).
\end{align*}
Then for all $k\in(0,1/2]$ and $i\in\{1,\ldots,N-1\}$ there holds 
\begin{align}\label{eq:Eip}
\norm{\bE_{i+1}}{h}^2 &+ \frac{k}{2}\norm{\nabla\times\be_{i+1}}{\Ltwo{D}}^2\nonumber\\
&\leq 
(1+2k)\norm{\bE_{i}}{h}^2
+
C_{\bH}k\big(\norm{\bm_t(t_i)-\bv_h^i}{\Ltwo{D}}^2+ h^2 + k^2\big) 
\end{align}
where the constant $C_{\bH}>0$ depends only on the smoothness of $\bH$, $\lambda$, and on the shape-regularity of $\TT_h$.
\end{lemma}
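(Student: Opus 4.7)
My plan is to derive a discrete error equation relating $d_t\bE_{i+1}$ to consistency residuals, test it with $\bE_{i+1}$, and exploit the symmetry and coercivity of $a_h$ (together with the norm equivalence \cref{eq:hnorm}) to obtain the stated recurrence.

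\textbf{Step 1 (error equation).} Since $(\bm,\bH,\lambda)$ is a strong solution, \cref{eq:wssymm2} holds pointwise in time. Evaluating it at $t=t_{i+1}$ and testing with an arbitrary $B_h=(\bxi_h,\zeta_h)\in\XX_h$, then subtracting \cref{eq:eddygeneral}, and finally inserting the interpolant $\widetilde A_h^{i+1}:=(\Pi_{\NN\DD}\bH(t_{i+1}),\Pi_\SS\lambda(t_{i+1}))$ so that $\bE_{i+1}=\widetilde A_h^{i+1}-A_h^{i+1}$, I arrive at
\[
a_h(d_t\bE_{i+1},B_h)+b(\be_{i+1},\bxi_h)=\dual{\bv_h^i-\bm_t(t_{i+1})}{\bxi_h}_D+\rho_1(B_h)+\rho_2(\bxi_h),
\]
where $\rho_1(B_h):=a(A_t(t_{i+1}),B_h)-a_h(d_t\widetilde A_h^{i+1},B_h)$ gathers the consistency error of the first bilinear form and $\rho_2(\bxi_h):=-b(\bH(t_{i+1})-\Pi_{\NN\DD}\bH(t_{i+1}),\bxi_h)$ is the curl interpolation defect.

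\textbf{Step 2 (consistency bounds).} The functional $\rho_1$ decomposes into a $D$-part $\dual{\bH_t(t_{i+1})-d_t\Pi_{\NN\DD}\bH(t_{i+1})}{\bxi_h}_D$ and a boundary part $\dual{\dtn_h d_t\Pi_\SS\lambda(t_{i+1})-\dtn\lambda_t(t_{i+1})}{\zeta_h}_\Gamma$. Each part is split into (i) a time-discretization residual controlled by Taylor expansion at $t_{i+1}$ together with $\bH\in W^{2,\infty}(0,T;\Ltwo{D})$ and regularity of $\lambda_t$, (ii) a spatial interpolation defect (commuting $d_t$ with $\Pi_{\NN\DD}$ and $\Pi_\SS$, then using $\bH\in W^{1,\infty}(0,T;\Hone{D})$ and $\lambda\in W^{1,\infty}(0,T;H^1(\Gamma))$), and (iii) the DtN consistency contribution $\dual{(\dtn_h-\dtn)\lambda_t(t_{i+1})}{\zeta_h}_\Gamma$, bounded via \cref{eq:dtnh} using $\dtn\lambda_t\in L^\infty(0,T;H^{1/2}_{\rm pw}(\Gamma))$. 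Altogether $|\rho_1(B_h)|\lesssim (h+k)\,\norm{B_h}{h}$, while $|\rho_2(\bxi_h)|\lesssim h\,\norm{\nabla\times\bxi_h}{\Ltwo{D}}$ follows from the standard \cref{eq:int}-type bound for $\Pi_{\NN\DD}$ and $\bH\in L^\infty(0,T;\H^2(D))$. The magnetization mismatch splits as $\norm{\bv_h^i-\bm_t(t_{i+1})}{\Ltwo{D}}\le\norm{\bv_h^i-\bm_t(t_i)}{\Ltwo{D}}+Ck$, the second term coming from $\bm\in W^{2,\infty}(0,T;\Ltwo{D})$.

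\textbf{Step 3 (testing and polarization).} Choose $B_h=\bE_{i+1}$ and multiply by $k$. Since the symmetric coupling makes $a_h$ symmetric, the polarization identity yields
\[
k\,a_h(d_t\bE_{i+1},\bE_{i+1})=\tfrac{1}{2}\bigl(\norm{\bE_{i+1}}{h}^2-\norm{\bE_i}{h}^2+\norm{\bE_{i+1}-\bE_i}{h}^2\bigr)\ge\tfrac{1}{2}\bigl(\norm{\bE_{i+1}}{h}^2-\norm{\bE_i}{h}^2\bigr),
\]
and $kb(\be_{i+1},\be_{i+1})=k\sigma^{-1}\mu_0^{-1}\norm{\nabla\times\be_{i+1}}{\Ltwo{D}}^2$.

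\textbf{Step 4 (Young's inequality and absorption).} Applying Young to each right-hand side contribution: the $\rho_2$ term absorbs into $\tfrac{1}{2}kb(\be_{i+1},\be_{i+1})$, leaving exactly $\tfrac{k}{2}\sigma^{-1}\mu_0^{-1}\norm{\nabla\times\be_{i+1}}{\Ltwo{D}}^2$ on the left (the $\sigma^{-1}\mu_0^{-1}$ is absorbed into $C_{\bH}$). The remaining $k$-weighted contributions of the form $k(h+k+\norm{\bm_t(t_i)-\bv_h^i}{\Ltwo{D}})\norm{\bE_{i+1}}{h}$ are split via $ab\le \tfrac{1}{2}a^2+\tfrac{1}{2}b^2$, producing $\tfrac{k}{2}\norm{\bE_{i+1}}{h}^2$ on the right plus $Ck(h^2+k^2+\norm{\bm_t(t_i)-\bv_h^i}{\Ltwo{D}}^2)$. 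Moving the former back to the left yields $(1-k)\norm{\bE_{i+1}}{h}^2\le \norm{\bE_i}{h}^2+\cdots$, and $(1-k)^{-1}\le 1+2k$ for $k\le 1/2$ gives the claimed $(1+2k)$ factor (absorbed constants adjusted into $C_{\bH}$).

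The main obstacle is Step~2, namely ensuring that the boundary consistency error splits cleanly so that the DtN approximation cost is only $O(h)$; this is exactly what \cref{eq:dtnh} provides under the precise regularity assumption $\dtn\lambda_t\in L^\infty(0,T;H^{1/2}_{\rm pw}(\Gamma))$. A secondary technical point is bookkeeping constants in Step~4 to land on the asymmetric factor $(1+2k)$ (rather than a Grönwall-type $(1+Ck)$ with larger $C$) and exactly $\tfrac{k}{2}$ in front of the curl term.
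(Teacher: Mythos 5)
Your proof follows the same strategy as the paper: compare the scheme with the nodal/N\'ed\'elec interpolant of the exact solution $(\Pi_{\NN\DD}\bH,\Pi_\SS\lambda)$, isolate a consistency residual, test the error equation with $\bE_{i+1}$, exploit the ellipticity and (symmetry-induced) telescoping of $a_h$ together with $b(\be_{i+1},\be_{i+1})\ge C_{\rm ell}\norm{\nabla\times\be_{i+1}}{\Ltwo{D}}^2$, and close with Young's inequality and $(1-k)^{-1}\le 1+2k$. Your split of the residual into a curl-free part $\rho_1$ (bounded in $\|\cdot\|_h$) and the purely curl-carrying $\rho_2$, absorbed into the $b$-term, is a slightly tidier bookkeeping than the paper's single $R_0$ bounded by $\norm{B_h}{\XX}$, but it leads to the same careful splitting they do with $C_\dtn$.

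One concrete issue: you evaluate the continuous identity at $t=t_{i+1}$, which forces you to shift the magnetization residual via $\norm{\bv_h^i-\bm_t(t_{i+1})}{\Ltwo{D}}\le\norm{\bv_h^i-\bm_t(t_i)}{\Ltwo{D}}+Ck$ using $\bm\in W^{2,\infty}(0,T;\Ltwo{D})$. That regularity is \emph{not} among the hypotheses of \cref{lem:cea}, which only assume $\bm\in W^{1,\infty}(0,T;\Ltwo{D})\cap\W^{1,\infty}(D_T)$ (it is assumed later in \cref{thm:strongconvELLG}, but the lemma is stated independently). The paper avoids this by evaluating the continuous identity at $t=t_i$ and Taylor-expanding $d_tA_h(t_{i+1})=\partial_tA_h(t_i)+r_i$, so the magnetization term appears directly at $t_i$ and matches $\bv_h^i$; the $O(k)$ time shift is then carried by the $b$-term, which is controlled by the assumed $\bH\in W^{1,\infty}(0,T;\Hone{D})$. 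If you rebase your Step 1 at $t_i$, your argument lands exactly on the lemma under its stated hypotheses, and the rest of your proof needs no change.
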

\begin{proof}
Recalling equation~\cref{eq:eddygeneral} and in view
of~\cref{eq:Ei ei},
we will establish a similar equation for~$\Pi_{\NN\DD}\bH(t_i)$.
With~$A:=(\bH,\lambda)$ and~$A_h:=
(\Pi_{\NN\DD}\bH,\Pi_{\SS}\lambda)\in\XX_h$,
it follows from~\cref{eq:wssymm2} that
\begin{align}\label{eq:Pih H}
\begin{split}
a_h(\partial_t &A_h(t),B_{h}) 
+ 
b(\Pi_{\NN\DD}\bH(t),\bxi_{h})
\\
&=  -\dual{\bm_t(t)}{\bxi_{h}}_{D}
-
a_h(\partial_t(A-A_h)(t),B_{h}) 
+
b((1-\Pi_{\NN\DD})\bH(t),\bxi_{h}) 
\notag
\\
&\quad
+
\dual{(\dtn-\dtn_h)\lambda_t(t)}{\zeta_{h}}_\Gamma
\\
&=:-\dual{\bm_t(t)}{\bxi_{h}}_{D}+R_0(A(t),B_{h})
\quad\forall B_h=(\bxi_h,\zeta_h)\in\XX_h.
\end{split}
\end{align}
The continuity of $a_h(\cdot,\cdot)$ and $b(\cdot,\cdot)$, the approximation properties of
$\Pi_{\NN\DD}$ and $\Pi_{\SS}$, 
and~\cref{eq:dtnelliptic},~\cref{eq:dtnh} give
\begin{align*}
 |R_0&(A(t),B_{h})|\\
&\lesssim
\norm{(1-\Pi_{\NN\DD})\bH_t(t)}{\Ltwo{D}}
\norm{\bxi_{h}}{\Ltwo{D}}
+
\norm{(1-\Pi_{\SS})\lambda_t(t)}{H^{1/2}(\Gamma)}
\norm{\zeta_{h}}{H^{1/2}(\Gamma)}
\\
&\quad
+
\norm{(1-\Pi_{\NN\DD})\bH(t)}{\Hcurl{D}}
\norm{\bxi_{h}}{\Ltwo{D}}
+
h\norm{\dtn\lambda_t(t)}{H^{1/2}_{\comment{\rm pw}}(\Gamma)}
\norm{\zeta_h}{H^{1/2}(\Gamma)}
\\
&\lesssim 
h
\big(
\norm{\bH_t(t)}{\Hone{D}}
+
\norm{\bH(t)}{\H^2(D)}
+
\norm{\dtn\lambda_t(t)}{H^{1/2}_{\comment{\rm pw}}(\Gamma)}
\big)
\norm{B}{\XX}.
\end{align*}
The regularity assumptions on $\bH$ and $\lambda$ yield
\begin{align}\label{eq:Rsize}
 |R_0(A(t),B_{h})|\lesssim h\norm{B_{h}}{\XX}.
\end{align}
Recalling definition~\cref{eq:dt}
and using Taylor's Theorem, we have
\begin{align*}
d_tA_h(t_{i+1})
=
\partial_t A_h(t_i) + r_i,
\end{align*}
where $r_i$ is the remainder (in the integral form)
of the Taylor expansion which satisfies
$\norm{r_i}{\Ltwo{D}\times H^{1/2}(\Gamma)}\lesssim k(\norm{\bH_{tt}}{L^\infty(0,T;\Ltwo{D})}+\norm{\lambda_{tt}}{L^\infty(0,T;H^{1/2}(\Gamma)})$.
Therefore,~\cref{eq:Pih H} and~\cref{eq:Rsize} imply
for all $B_{h}=(\bxi_{h},\zeta_{h})\in\XX_{h}$
\begin{align}\label{eq:auxx3}
a_h(d_t A_h(t_{i+1}),B_{h}) + b(\Pi_{\NN\DD}\bH(t_{i}),\bxi_{h})
=-\dual{\bm_t(t_{i})}{\bxi_{h}}_D+R_1(A(t_{i}),B_{h}),
\end{align}
where 
$R_1(A(t_{i}),B_{h}):=R_0(A(t_{i}),B_{h}) - a_h(r_i,B_{h})$ so that
\begin{align}\label{eq:R1size}
 |R_1(A(t_i),B_{h})|
&\lesssim (h + k(\norm{\bH_{tt}}{L^\infty(0,T;\Ltwo{D})}+\norm{\lambda_{tt}}{L^\infty(0,T;H^{1/2}(\Gamma)}))\norm{B_{h}}{\XX}
\notag
\\
&\lesssim (h+k)\norm{B_{h}}{\XX},
\end{align}
where we used the uniform continuity of $a_h(\cdot,\cdot)$ in $h$ obtained from~\cref{eq:dtnelliptic}.
Subtracting~\cref{eq:eddygeneral} from~\cref{eq:auxx3} and 
setting~$B_h=\bE_{i+1}$ yield
\begin{align*}
a_h(d_t \bE_{i+1},\bE_{i+1}) + b(\be_{i+1},\be_{i+1})=
\dual{\bv_h^i-\bm_t(t_i)}{\be_{i+1}}_D+R_1(A(t_i),\bE_{i+1}).
\end{align*}
Multiplying the above equation by~$k$ and using the ellipticity properties of $a_{h}(\cdot,\cdot)$, 
$b(\cdot,\cdot)$, and the Cauchy-Schwarz inequality, and
recalling definition~\cref{eq:hnorm} of the $h$-norm, we deduce
\begin{align*}
 \norm{\bE_{i+1}}{h}^2 &+ k\norm{\nabla\times\be_{i+1})}{\Ltwo{D}}^2\nonumber\\
 &\leq \norm{\bE_{i}}{h}\norm{\bE_{i+1}}{h}+
k\norm{\bv_h^i-\bm_t(t_i)}{\Ltwo{D}}\norm{\be_{i+1}}{\Ltwo{D}}\nonumber
+ Ck(h+k)\norm{\bE_{i+1}}{\XX},
\end{align*}
for some constant $C>0$ which does not depend on $h$ or $k$.
With Young's inequality, this implies
\begin{align}\label{eq:E e}
\begin{split}
\norm{\bE_{i+1}}{h}^2 
&+ 
k\norm{\nabla\times\be_{i+1}}{\Ltwo{D}}^2
\\&\leq 
\frac{1}{2} \norm{\bE_{i}}{h}^2
+
\frac{1}{2} \norm{\bE_{i+1}}{h}^2
+
\frac{k}{2}\frac{C_\dtn}{(C_\dtn-1)}
\norm{\bv_h^i-\bm_t(t_i)}{\Ltwo{D}}^2
\\
&\quad 
+
\frac{k}{2}
\frac{C_\dtn-1}{C_\dtn}
\norm{\be_{i+1}}{\Ltwo{D}}^2
+ 
\frac{k}{2C_\dtn}\norm{\bE_{i+1}}{\XX}^2
+
\frac{kC_\dtn}{2}
C^2
(h+k)^2.
\end{split}
\end{align}
Note that
\begin{align*}
\frac{k}{2} 
\Big(\frac{C_\dtn-1}{C_\dtn}
&\norm{\be_{i+1}}{\Ltwo{D}}^2
+ 
\frac{1}{C_\dtn}\norm{\bE_{i+1}}{\XX}^2
\Big)\\
&=
\frac{k}{2}
\big(
\norm{\be_{i+1}}{\Ltwo{D}}^2
+
\frac{1}{C_\dtn}
\norm{\nabla\times\be_{i+1}}{\Ltwo{D}}^2
+
\frac{1}{C_\dtn}
\norm{f_i}{H^{1/2}(\Gamma)}^2
\big)
\\
&\le
\frac{k}{2}
\big(
\norm{\be_{i+1}}{\Ltwo{D}}^2
+
\frac{1}{C_\dtn}
\norm{\nabla\times\be_{i+1}}{\Ltwo{D}}^2
-
\dual{\dtn_h f_i}{f_i}_{\Gamma}
\big)
\\
&\le
\frac{k}{2}
\norm{\bE_{i+1}}{h}^2
+
\frac{k}{2C_\dtn}
\norm{\nabla\times\be_{i+1}}{\Ltwo{D}}^2.
\end{align*}
Hence~\cref{eq:E e} yields (after multiplying by 2)
\begin{align*}
(1-k)
\norm{\bE_{i+1}}{h}^2 
&+ 
k(2-\frac{1}{C_\dtn})
\norm{\nabla\times\be_{i+1}}{\Ltwo{D}}^2
\\
&\leq 
\norm{\bE_{i}}{h}^2
+
\frac{kC_\dtn}{C_\dtn-1}
\norm{\bv_h^i-\bm_t(t_i)}{\Ltwo{D}}^2
+
k C_\dtn C^2 (h+k)^2.
\end{align*}
Dividing by~$1-k$, using the fact that~$1\le1/(1-k)\le1+2k\le2$
(since $0<k\le1/2$), and noting that~$C_\dtn\ge1$,
we obtain the desired estimate~\cref{eq:Eip},
concluding the proof.
\end{proof}

Similarly to \Cref{lem:mt vhi} we now prove the following lemma for the
ELLG system.
\begin{lemma}\label{lemma:cea0}
Let $(\bm,\bH,\lambda)$ be a strong solution of ELLG
 which satisfies 
\begin{align*}
\bm &\in W^{2,\infty}\big(0,T;\Hone{D}\big)\cap W^{1,\infty}\big(0,T;
\W^{1,\infty}(D)\cap \H^2(D)\big), \\
\bH&\in L^\infty(0,T; \H^2(D)\cap L^\infty(D))
\cap
W^{2,\infty}(0,T;\Ltwo{D}). 
\end{align*}
Then, for $1/2<\theta\le1$, we have
\begin{align*}
\frac{\alpha}{C_e}&
\norm{\bm_t(t_i)  - \bv_h^i}{\Ltwo{D}}^2
+
k
\norm{\nabla\bm_t(t_i) - \nabla\bv_h^i}{\Ltwo{D}}^2
\\
&+
2
\dual{\nabla\bm(t_i)-\nabla\bm_h^i}{\nabla\bm_t(t_i)-\nabla\bv_h^i}
\\
&\qquad\qquad\le
C_{\bH}
\Big(
h^2 + k^2 +
\norm{\bm(t_i) - \bm_h^i}{\H^1(D)}^2
+
\norm{\bH(t_i) - \bH_h^i}{\Ltwo{D}}^2
\Big).
\end{align*}
\end{lemma}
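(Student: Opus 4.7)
The plan is to follow the proof of \Cref{lem:mt vhi} almost verbatim, now invoking \Cref{lem:help2} in place of \Cref{lem:help}. The only essential difference is the extra term $\dual{\bH(t_i)}{\bphi_h}_D$ appearing on the right-hand side of~\cref{eq:help2}, which gives rise to one additional contribution after subtracting~\cref{eq:dllg}.

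Concretely, set $\bphi := \bm_t(t_i) - \bv_h^i$ and $\bphi_h := \P_h^i\bm_t(t_i) - \bv_h^i \in \KK_{\bm_h^i}$. Subtracting~\cref{eq:dllg} from~\cref{eq:help2} yields, for every $\bphi_h \in \KK_{\bm_h^i}$,
\begin{align*}
\alpha \dual{\bphi}{\bphi_h}_D &+ C_e \theta k \dual{\nabla\bphi}{\nabla\bphi_h}_D + C_e \dual{\nabla\bm(t_i)-\nabla\bm_h^i}{\nabla\bphi_h}_D \\
&= \dual{\bm_h^i\times\bv_h^i - \bm(t_i)\times\bm_t(t_i)}{\bphi_h}_D + \dual{\bH(t_i)-\bH_h^i}{\bphi_h}_D + \widetilde R(\bphi_h).
\end{align*}
As in the proof of \Cref{lem:mt vhi}, I decompose the target quantity as
\[
\alpha \norm{\bphi}{\Ltwo{D}}^2 + C_e\theta k \norm{\nabla\bphi}{\Ltwo{D}}^2 + C_e \dual{\nabla\bm(t_i)-\nabla\bm_h^i}{\nabla\bphi}_D = T_1 + \cdots + T_5 + T_6,
\]
where $T_1,\ldots,T_5$ are exactly the terms in~\cref{eq:phi nab phi} (with $\widetilde R$ replacing $R$) and $T_6 := \dual{\bH(t_i)-\bH_h^i}{\bphi_h}_D$ is the only new contribution.

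The estimates for $T_1,\ldots,T_5$ go through unchanged: \Cref{lem:orthoproj} provides $\norm{\bphi-\bphi_h}{\Hone{D}} \lesssim h + \norm{\bm(t_i)-\bm_h^i}{\Hone{D}}$, and the bound~\cref{eq:Rphih3} for $\widetilde R$ has exactly the same form as~\cref{eq:Rphih} for $R$. For the new term $T_6$ I apply Cauchy-Schwarz together with $\norm{\bphi_h}{\Ltwo{D}} \le \norm{\bphi}{\Ltwo{D}} + \norm{\bphi-\bphi_h}{\Ltwo{D}}$ and \Cref{lem:orthoproj} to obtain
\[
|T_6| \lesssim \norm{\bH(t_i)-\bH_h^i}{\Ltwo{D}} \big( \norm{\bphi}{\Ltwo{D}} + h + \norm{\bm(t_i)-\bm_h^i}{\Hone{D}} \big),
\]
and Young's inequality converts this into $\epsilon\norm{\bphi}{\Ltwo{D}}^2$ plus terms of precisely the desired form $h^2 + \norm{\bm(t_i)-\bm_h^i}{\Hone{D}}^2 + \norm{\bH(t_i)-\bH_h^i}{\Ltwo{D}}^2$.

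Choosing $\epsilon = \min\{\alpha/(2C_e), \theta-1/2\}$ to absorb the $\epsilon\norm{\bphi}{\Ltwo{D}}^2$ and $\epsilon k^2 \norm{\nabla\bphi}{\Ltwo{D}}^2$ contributions on the left concludes the proof. I do not anticipate any real obstacle: the bulk of the work is already contained in \Cref{lem:mt vhi}, and the $\bH$-coupling term is handled by a routine Cauchy-Schwarz/Young estimate that is exactly responsible for the extra $\norm{\bH(t_i)-\bH_h^i}{\Ltwo{D}}^2$ on the right-hand side.
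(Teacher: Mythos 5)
Your proposal is correct and follows the paper's proof essentially verbatim: same subtraction of \cref{eq:dllg} from \cref{eq:help2}, same choice of $\bphi$ and $\bphi_h$, same reuse of the estimates for the common terms from \Cref{lem:mt vhi}, and the same Cauchy--Schwarz plus \Cref{lem:orthoproj} plus Young treatment of the new $\dual{\bH(t_i)-\bH_h^i}{\bphi_h}_D$ term (the paper merely labels this $\widetilde T_5$ and the $\widetilde R$-term $\widetilde T_6$, i.e.\ your indices $5$ and $6$ are swapped relative to the paper's). No gap.
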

\begin{proof}
The proof follows that of \Cref{lem:mt vhi}.
Subtracting~\cref{eq:dllg} from~\cref{eq:help2} and putting
$\bphi:=\bm_t(t_i)-\bv_h^i$ we obtain
for $\bphi_h:=\P_h^i\bm_t(t_i)-\bv_h^i\in\KK_{\bm_h^i}$
\begin{align*}
\alpha\dual{\bphi}{\bphi_h}_D
&+ 
C_e\theta k\dual{\nabla\bphi}{\nabla\bphi_h}_D
+
C_e\dual{\nabla\bm(t_i)-\nabla\bm_h^i}{\nabla\bphi_h}_D
\\
&=
\dual{\bm_h^i\times\bv_h^i-\bm(t_i)\times\bm_t(t_i)}{\bphi_h}_D
+\dual{\bH(t_i)-\bH_h^i}{\bphi_h}_D
+
\widetilde R(\bphi_h).
\end{align*}
Similarly to~\cref{eq:phi nab phi} we now have
\begin{align*}
\alpha\norm{\bphi}{\Ltwo{D}}^2
&+
C_e\theta
k\norm{\nabla\bphi}{\Ltwo{D}}^2
+
C_e\dual{\nabla(\bm(t_i)-\bm_h^i)}{\nabla\bphi}_D\\
&=
T_1 + \cdots + T_4 + \widetilde T_5 + \widetilde T_6,
\end{align*}
where $T_1$, \ldots, $T_4$ are defined as in~\cref{eq:phi nab phi} whereas
\[
\widetilde T_5 :=  \dual{\bH(t_i)-\bH_h^i}{\bphi_h}_D
\quad\text{and}\quad
\widetilde T_6 := \widetilde R(\bphi_h).
\]
Estimates for~$T_1$, \ldots, $T_4$ have been carried out in the proof of
\Cref{lem:mt vhi}. For $\widetilde T_5$ we have
\begin{align*}
|\widetilde T_5|
&\le
\norm{\bH(t_i)-\bH_h^i}{\Ltwo{D}}\norm{\bphi_h}{\Ltwo{D}}
\\
&\lesssim
\norm{\bH(t_i)-\bH_h^i}{\Ltwo{D}}\norm{\bphi}{\Ltwo{D}}
+
\norm{\bH(t_i)-\bH_h^i}{\Ltwo{D}}
\big(
h + \norm{\bm(t_i)-\bm_h^i}{\Ltwo{D}}
\big)
\\
&\lesssim
\norm{\bH(t_i)-\bH_h^i}{\Ltwo{D}}\norm{\bphi}{\Ltwo{D}}
+
h^2 
+
\norm{\bH(t_i)-\bH_h^i}{\Ltwo{D}}^2
\\
&\qquad+ 
\norm{\bm(t_i)-\bm_h^i}{\Ltwo{D}}^2,
\end{align*}
where we used the triangle inequality and invoked
\Cref{lem:orthoproj} to estimate
$\norm{\bphi-\bphi_h}{\Ltwo{D}}$.
Finally, for $\widetilde T_6$ we use~\cref{eq:Rphih3}, the
triangle inequality, and \Cref{lem:orthoproj} to obtain
\begin{align*}
|\widetilde T_6|
&\lesssim
\big(
h + k +
\norm{\bm(t_i)-\bm_h^i}{\Ltwo{D}}
\big)
\norm{\bphi_h}{\Ltwo{D}}
\\
&\lesssim
\big(
h + k +
\norm{\bm(t_i)-\bm_h^i}{\Ltwo{D}}
\big)
\norm{\bphi}{\Ltwo{D}}
\\
&\quad
+
\big(
h + k +
\norm{\bm(t_i)-\bm_h^i}{\Ltwo{D}}
\big)
\big(
h + \norm{\bm(t_i)-\bm_h^i}{\Hone{D}}
\big)
\\
&\lesssim
h^2 + k^2 +
\norm{\bm(t_i)-\bm_h^i}{\Hone{D}}^2
+
\big(
h + k +
\norm{\bm(t_i)-\bm_h^i}{\Ltwo{D}}
\big)
\norm{\bphi}{\Ltwo{D}}.
\end{align*}
The proof finishes in exactly the same manner as that of
\Cref{lem:mt vhi}.
\end{proof}  

\subsection{Proof of \Cref{thm:weakconv}}\label{section:weak}
We are now ready to prove that the
problem~\cref{eq:strong}--\cref{eq:con} has a weak solution.
\begin{proof}
We recall from~\cref{eq:wc1}--\cref{eq:wc5} that $\bm\in \Hone{D_T}$, 
$(\bH,\lambda)\in L^2(0,T;\XX)$ and $\bH\in H^1(0,T;\Ltwo{D})$.
By virtue of \Cref{lem:bil for} it suffices to prove that
$(\bm,\bH,\lambda)$ satisfies~\cref{eq:wssymm1} 
and~\cref{eq:bil for}.

Let $\bphi\in C^\infty(D_T)$ and $B:=(\bxi,\zeta)\in L^2(0,T;\XX)$.
On the one hand, we define the test function
$\bphi_{hk}:=\Pi_{\SS}(\bm_{hk}^-\times\bphi)$
as the usual interpolant of~$\bm_{hk}^-\times\bphi$ into
$\SS^1(\TT_h)^3$. 
By definition, $\bphi_{hk}(t,\cdot) \in\KK_{\bm_h^j}$ for all
$t\in[t_j,t_{j+1})$. 
On the other hand, it follows from \Cref{lem:den pro}
that there exists~$B_h:=(\bxi_h,\zeta_h)\in\XX_h$ converging
to~$B\in\XX$.
\Cref{eq:mhk Hhk} hold with these
test functions. The main idea of the proof is to
pass to the limit in~\cref{eq:mhk Hhk1} and~\cref{eq:mhk Hhk2} to
obtain~\cref{eq:wssymm1} and~\cref{eq:bil for}, respectively.

In order to prove that~\cref{eq:mhk Hhk1}
implies~\cref{eq:wssymm1} we \comment{need to} prove that as~$h,k\to0$
\begin{subequations}\label{eq:conv}
\begin{align}
\dual{\bv_{hk}^-}{\bphi_{hk}}_{D_T} 
&\to
\dual{\bm_t}{ \bm\times\bphi}_{D_T},
\label{eq:conv1}
\\
\dual{\bm_{hk}^-\times\bv_{hk}^-}{\bphi_{hk}}_{D_T}
&\to
\dual{\bm\times\bm_t}{\bm\times\bphi}_{D_T},
\label{eq:conv2}
\\
k\dual{\nabla\bv_{hk}^-}{\nabla\bphi_{hk}}_{D_T}
&\to0,
\label{eq:conv3}
\\
\dual{\nabla\bm_{hk}^-}{\nabla\bphi_{hk}}_{D_T} 
&\to
\dual{\nabla\bm}{\nabla(\bm\times\bphi)}_{D_T},
\label{eq:conv4}
\\
\dual{\bH_{hk}^-}{\bphi_{hk}}_{D_T} 
&\to
\dual{\bH}{ \bm\times\bphi}_{D_T}.
\label{eq:conv5}
\end{align}
\end{subequations}
\comment{The proof has been carried out in~\cite{Abert_etal, alouges,ellg} and is therefore
omitted.}

Next, recalling that~$B_h\to B$ in~$\XX$
we prove that~\cref{eq:mhk Hhk2}
implies~\cref{eq:bil for} by proving
\begin{subequations}\label{eq:convb}
\begin{align}
\dual{\partial_t\bH_{hk}}{\bxi_h}_{D_T}
&\to
\dual{\bH_t}{\bxi}_{D_T},
\label{eq:convb1} 
\\
\dual{\dtn_h\partial_t\lambda_{hk}}{\zeta_h}_{\Gamma_T}
&\to
\dual{\dtn\lambda_t}{\zeta}_{\Gamma_T}, \label{eq:spec}
\\
\dual{\nabla\times\bH_{hk}^+}{\nabla\times\bxi_h}_{D_T}
&\to
\dual{\nabla\times\bH}{\nabla\times\bxi}_{D_T},
\\
\dual{\bv_{hk}^-}{\bxi_h}_{D_T}
&\to
\dual{\bv}{\bxi}_{D_T}.
\end{align}
\end{subequations}
The proof is similar to that of~\cref{eq:conv} (where we
use \Cref{lem:wea con}
for the proof of~\cref{eq:spec}) and is
therefore omitted. 

\comment{Passing to the limit in~\cref{eq:mhk Hhk1}--\cref{eq:mhk Hhk2} and
using properties~\cref{eq:conv}--\cref{eq:convb} prove
Items~3 and~5 of \Cref{def:fembemllg}.}

Finally, we obtain $\bm(0,\cdot)=\bm^0$, $\bH(0,\cdot)=\bH^0$, and
$\lambda(0,\cdot)=\lambda^0$ from the weak convergence and the
continuity of the trace operator.
This and $|\bm|=1$ yield Statements~(1)--(2) of
\Cref{def:fembemllg}. To obtain~(4), note that
$\nabla_\Gamma\colon H^{1/2}(\Gamma)\to \H_\perp^{-1/2}(\Gamma)$ and
$\bn\times(\bn\times(\cdot))\colon \Hcurl{D}\to  \H_\perp^{-1/2}(\Gamma)$ are bounded
linear operators; see~\cite[Section~4.2]{buffa2} for exact definition of the spaces and the result. 
Weak convergence then proves \comment{Item~4} of
\Cref{def:fembemllg}.
Estimate~\cref{eq:energybound2} follows by weak
lower-semicontinuity and the energy bound~\cref{eq:denergy}.
This completes the proof of the theorem.
\end{proof}

\subsection{Proof of \Cref{thm:strongconvLLG}}\label{section:strong}
In this subsection we invoke 
Lemmas~\ref{lem:orthoproj},~\ref{lem:help}, and~\ref{lem:mt vhi} 
to prove a priori error estimates for the pure LLG equation.
\begin{proof}
It follows from Taylor's Theorem,~\cref{eq:mhip1}, and Young's
inequality that
\begin{align}\label{eq:mt mh}
\begin{split}
\norm{\bm(t_{i+1})&-\bm_h^{i+1}}{\Hone{D}}^2\\
&\leq 
(1+k)\norm{\bm(t_i)+k\bm_t(t_i)-(\bm_h^{i}+k\bv_h^i)}{\Hone{D}}^2
\\
&\qquad+
(1+k^{-1})\frac{k^4}{4}\norm{\bm_{tt}}{L^\infty(0,T;\Hone{D})}^2
\\
&\leq
(1+k)\norm{\bm(t_i)-\bm_h^{i}}{\Hone{D}}^2
+
(1+k)k^2\norm{\bm_t(t_i)-\bv_h^{i}}{\Hone{D}}^2
\\
&\quad+
2k(1+k)\dual{\bm(t_i)-\bm_h^{i}}{\bm_t(t_i)-\bv_h^{i}}_D
\\
&\quad+
2k(1+k)
\dual{\nabla\bm(t_i)-\nabla\bm_h^{i}}{\nabla\bm_t(t_i)-\nabla\bv_h^{i}}_D
\\
&\quad+
k^3\norm{\bm}{W^{2,\infty}(0,T;\Hone{D})}^2,
\end{split}
 \end{align}
recalling that $0<k\le1$. The third term on the right-hand side is
estimated as
\begin{align*}
\big|
2k(1+k)\dual{\bm(t_i)-\bm_h^{i}}{\bm_t(t_i)-\bv_h^{i}}_D
\big|
&\le
\delta^{-1}
k(1+k) 
\norm{\bm(t_i)-\bm_h^{i}}{\Ltwo{D}}^2
\\
&\quad
+
\delta
k(1+k) 
\norm{\bm_t(t_i)-\bv_h^{i}}{\Ltwo{D}}^2,
\end{align*}
for any $\delta>0$, so that~\cref{eq:mt mh} becomes
\begin{align}\label{eq:mt mh2}
\begin{split}
\norm{\bm(t_{i+1})&-\bm_h^{i+1}}{\Hone{D}}^2\\
&\le
(1+k)(1+\delta^{-1}k) \norm{\bm(t_i)-\bm_h^{i}}{\Hone{D}}^2
\\
&\quad+
k(1+k)
\Big(
(k+\delta)
\norm{\bm_t(t_i)-\bv_h^{i}}{\Ltwo{D}}^2
+
k
\norm{\nabla\bm_t(t_i)-\nabla\bv_h^{i}}{\Ltwo{D}}^2
\\
&\quad+
2\dual{\nabla\bm(t_i)-\nabla\bm_h^{i}}{\nabla\bm_t(t_i)-\nabla\bv_h^{i}}_D
\Big)
\\
&\quad+
k^3\norm{\bm}{W^{2,\infty}(0,T;\Hone{D})}^2.
\end{split}
\end{align}
Due to the assumption $k<\alpha/(2C_e)$ we can choose $\delta=\alpha/(2C_e)$ 
such that $k+\delta\le\alpha/C_e$ and use~\cref{eq:mt vhi} to deduce
\begin{align*}
\norm{\bm(t_{i+1})&-\bm_h^{i+1}}{\Hone{D}}^2\\
&\le
(1+k)(1+\delta^{-1}k) \norm{\bm(t_i)-\bm_h^{i}}{\Hone{D}}^2
\\
&\quad+
2k C_{\bm}
\big(
h^2 + k^2 +
\norm{\bm(t_i) - \bm_h^i}{\H^1(D)}^2
\big)
+
k^3\norm{\bm}{W^{2,\infty}(0,T;\Hone{D})}^2
\\
&=
\big(1+(1+\delta^{-1}+2C_{\bm})k+\delta^{-1}k^2\big) 
\norm{\bm(t_i)-\bm_h^{i}}{\Hone{D}}^2
\\
&\quad+
2k C_{\bm}
(h^2 + k^2)
+ k^3\norm{\bm}{W^{2,\infty}(0,T;\Hone{D})}^2.
\end{align*}
Applying \Cref{lem:dis Gro} (in the Appendix below) with
$a_i:=\norm{\bm(t_i)-\bm_h^{i}}{\Hone{D}}^2$,
$b_i:= (1+\delta^{-1}+2C_{\bm})k+\delta^{-1}k^2$,
and
$
c_i
:=
k 
\big(
2C_{\bm} (h^2 + k^2)
+
k^2\norm{\bm}{W^{2,\infty}(0,T;\Hone{D})}^2
\big),
$
we deduce
\begin{align*}
\norm{\bm(t_j)&-\bm_h^{j}}{\Hone{D}}^2\\
&\lesssim
e^{t_j}
\Big(\norm{\bm(0)-\bm^0_h}{\Hone{D}}^2
+
t_j
\big(
2C_{\bm} (h^2 + k^2)
+
k^2\norm{\bm}{W^{2,\infty}(0,T;\Hone{D})}^2
\big)
\Big)
\\
&\lesssim 
\norm{\bm^0-\bm^0_h}{\Hone{D}}^2+h^2+k^2,
\end{align*}
proving~\cref{eq:strongconv}. 

To prove~\cref{eq:strongconv2} we first note that
\begin{align}\label{eq:mmhk}
\begin{split}
\norm{\bm&-\bm_{hk}}{L^2(0,T;\Hone{D})}^2\\
&=
\sum_{i=0}^{N-1}
\int_{t_i}^{t_{i+1}}
\Big(
\norm{\frac{t_{i+1}-t}{k}\big(\bm(t)-\bm_{h}^i\big)
\\
&\quad
+ 
\frac{t-t_i}{k}\big(\bm(t)-\bm_h^{i+1}\big)}{\Hone{D}}^2
\Big) \, dt
\\
&\lesssim
\sum_{i=0}^{N-1}
\int_{t_i}^{t_{i+1}}
\Big(
\norm{\bm(t)-\bm_{h}^{i}}{\Hone{D}}^2
+
\norm{\bm(t)-\bm_{h}^{i+1}}{\Hone{D}}^2
\Big)
\, dt
\\
&\lesssim 
\max_{0\leq i\leq N}
\norm{\bm(t_i)-\bm_h^{\revision{i}}}{\Hone{D}}^2
+
k^2 \norm{\bm_t}{L^{\infty}(0,T;\Hone{D})}^2,
\end{split}
\end{align}
where in the last step we used Taylor's Theorem.
The uniqueness of the strong solution~$\bm$ follows
from~\cref{eq:strongconv2} and the fact that the $\bm_{hk}$ are uniquely
determined by Algorithm~\ref{algorithm}.
With the weak convergence proved in \Cref{thm:weakconv},
we obtain that this weak solution coincides with $\bm$.
This concludes the proof.
\end{proof}
\begin{remark}
Since
\begin{align*}
\norm{\bm-\bm_{hk}}{\Hone{D_T}}^2
&=
\int_0^T
\big(
\norm{\bm(t)-\bm_{hk}(t)}{\Hone{D}}^2
+
\norm{\bm_t(t)-\partial_t\bm_{hk}(t)}{\Ltwo{D}}^2
\big)
\, dt
\\
&\lesssim
\sum_{i=0}^{N-1}
\int_{t_i}^{t_{i+1}}
\Big(
\norm{\bm(t)-\bm_{h}^{i}}{\Hone{D}}^2
+
\norm{\bm(t)-\bm_{h}^{i+1}}{\Hone{D}}^2
\\
&\quad
+
\norm{\bm_t(t)-\bv_{h}^{i}}{\Ltwo{D}}^2
\Big)
\, dt
\\
&\lesssim 
\max_{0\leq i\leq N}
\norm{\bm(t_i)-\bm_h^{j}}{\Hone{D}}^2
+
\max_{0\leq i\leq N}
\norm{\bm_t(t_i)-\bv_{h}^{i}}{\Ltwo{D}}^2
\\
&\quad
+
k^2 \norm{\bm_t}{L^{\infty}(0,T;\Hone{D})}^2
+
k^2 \norm{\bm_{tt}}{L^{\infty}(0,T;\Ltwo{D})}^2,
\end{align*}
by using~\cref{eq:mt vhi} for the second term on the right-hand side
and using Young's inequality~$2ab\le ka^2 + k^{-1}b^2$ for the
inner product in~\cref{eq:mt vhi},
we obtain a weaker convergence in the $\Hone{D_T}$-norm, namely
\begin{align*}
\norm{\bm-\bm_{hk}}{\H^1(D_T)}\leq 
C_{\rm conv}
k^{-1/2}
\big(
\norm{\bm^0-\bm^0_h}{\Hone{D}}+h+k\big),
\end{align*}
provided that $hk^{-1/2}\to0$ when $h,k\to0$.
\end{remark}

\subsection{Proof of \Cref{thm:strongconvELLG}}\label{section:strong2}
This section bootstraps the results of the previous section to
include the full ELLG system into the analysis. 

\begin{proof}
Similarly to the proof of \Cref{thm:strongconvLLG}, we
derive~\cref{eq:mt mh2} with $\delta = \alpha/(4C_e)$.
Multiplying~\cref{eq:Eip} by $\beta = \alpha/(4C_eC_{\bH})$ and adding the
resulting equation to~\cref{eq:mt mh2} yields
\begin{align*}
\norm{&\bm(t_{i+1})-\bm_h^{i+1}}{\Hone{D}}^2
+
\beta\norm{\bE_{i+1}}{h}^2
+
\beta \frac{k}{2} \norm{\nabla\times\be_{i+1}}{\Ltwo{D}}^2
\nonumber\\
&\leq 
(1+k)(1+\delta^{-1}k)
\norm{\bm(t_i)-\bm_h^{i}}{\Hone{D}}^2
\nonumber\\
&\quad
+
k(1+k)
\Big(
(k+\delta+\beta C_{\bH})
\norm{\bm_t(t_i)-\bv_h^{i}}{\Ltwo{D}}^2
+
k
\norm{\nabla\bm_t(t_i)-\nabla\bv_h^{i}}{\Ltwo{D}}^2
\nonumber\\
&\quad+
2\dual{\nabla\bm(t_i)-\nabla\bm_h^{i}}{\nabla\bm_t(t_i)-\nabla\bv_h^{i}}_D
\Big)
+
k^3\norm{\bm}{W^{2,\infty}(0,T;\Hone{D})}^2
\nonumber\\
&\quad+
(1+2k)\beta\norm{\bE_{i}}{h}^2
+
\beta C_{\bH}k(h^2+k^2).
\end{align*}
The assumption~$k\le\alpha/(2C_e)$, see
\Cref{thm:strongconvELLG}, implies
$k+\delta+\beta C_{\bH}\le\alpha/C_e$. 
By invoking \Cref{lemma:cea0} 
we infer
\begin{align}\label{eq:mti mhi}
\norm{\bm(t_{i+1})&-\bm_h^{i+1}}{\Hone{D}}^2
+
\beta\norm{\bE_{i+1}}{h}^2
+
\beta \frac{k}{2} \norm{\nabla\times\be_{i+1}}{\Ltwo{D}}^2
\nonumber\\
&\le
(1+k)(1+\delta^{-1}k) \norm{\bm(t_i)-\bm_h^{i}}{\Hone{D}}^2
\nonumber\\
&\quad+
k(1+k) C_{\bH}
\Big(
h^2 + k^2 +
\norm{\bm(t_i) - \bm_h^i}{\H^1(D)}^2
+
\norm{\bH(t_i)-\bH_h^i}{\Ltwo{D}}^2
\Big)
\nonumber\\
&\quad
+
k^3\norm{\bm}{W^{2,\infty}(0,T;\Hone{D})}^2
+
(1+2k)\beta\norm{\bE_{i}}{h}^2
+
\beta C_{\bH}k(h^2+k^2)
\nonumber\\
&=
\big(1+(1+\delta^{-1}+C_{\bH})k+(\delta^{-1}+C_{\bH})k^2\big) 
\norm{\bm(t_i)-\bm_h^{i}}{\Hone{D}}^2
\nonumber\\
&\quad+
k(1+k)C_{\bH}
\norm{\bH(t_i)-\bH_h^i}{\Ltwo{D}}^2
+
(1+2k)\beta\norm{\bE_{i}}{h}^2
\nonumber\\
&\quad
+ 
k^3\norm{\bm}{W^{2,\infty}(0,T;\Hone{D})}^2
+
k C_{\bH} (1 + k + \beta)(h^2 + k^2).
\end{align}
The approximation properties of $\Pi_{\NN\DD}$ and the regularity of $\bH$
imply 
\[
\norm{\bH(t_i)-\bH_h^i}{\Ltwo{D}}^2
\lesssim 
\norm{\bE_i}{\Ltwo{D}}^2 + h^2,
\]
where the hidden constant depends only on the shape regularity of
$\TT_h$ and on the regularity of $\bH$. Hence, we obtain
from~\cref{eq:mti mhi}
\begin{align*}
\norm{\bm(t_{i+1})&-\bm_h^{i+1}}{\Hone{D}}^2
+
\beta\norm{\bE_{i+1}}{h}^2
+
\beta \frac{k}{2} \norm{\nabla\times\be_{i+1}}{\Ltwo{D}}^2
\\
&\leq 
(1+C_{\rm comb}k)
\norm{\bm(t_i)-\bm_h^{i}}{\Hone{D}}^2
+
(1+Ck)\beta\norm{\bE_{i}}{h}^2
\\
&\quad
+ 
k^3\norm{\bm}{W^{2,\infty}(0,T;\Hone{D})}^2
+
k C (h^2 + k^2),
\end{align*}
where $C_{\rm comb} := 1+2\delta^{-1}+2C_{\bH}$ and
for some constant $C>0$ which is independent of $k,h$ and $i$. 
Hence, we find a constant $\widetilde C_{\rm comb}>0$ such that
\begin{align}\label{eq:mEe}
\norm{\bm(t_{i+1})&-\bm_h^{i+1}}{\Hone{D}}^2
+
\beta\norm{\bE_{i+1}}{h}^2
+
\beta \frac{k}{2} \norm{\nabla\times\be_{i+1}}{\Ltwo{D}}^2
\nonumber
\\
&\leq 
(1+\widetilde C_{\rm comb}k)
\big(
\norm{\bm(t_i)-\bm_h^{i}}{\Hone{D}}^2
+
\beta\norm{\bE_{i}}{h}^2
\big)
+
k\widetilde C_{\rm comb}
\big(h^2+k^2\big).
\end{align}
Applying \Cref{lem:dis Gro} (in the Appendix below) with
$a_i:=\norm{\bm(t_{i+1})-\bm_h^{i+1}}{\Hone{D}}^2
+
\beta\norm{\bE_{i+1}}{h}^2
+
\beta \frac{k}{2} \norm{\nabla\times\be_{i+1}}{\Ltwo{D}}^2$, 
$b_i=\widetilde C_{\rm comb}k$, and 
$c_i=k\widetilde C_{\rm comb}(h^2+k^2)$ 
we deduce, for all $i=0,\ldots,N$,
\begin{align}\label{eq:almostconvest1}
\begin{split}
\norm{\bm(t_{i+1})&-\bm_h^{i+1}}{\Hone{D}}^2
+
\norm{\bE_{i+1}}{h}^2
+
k\norm{\nabla\times\be_{i+1}}{\Ltwo{D}}^2
\\
&\lesssim 
\norm{\bm^0-\bm_h^0}{\Hone{D}}^2
+
\norm{\bE_0}{h}^2
+
k\norm{\nabla\times \be_0}{\Ltwo{D}}^2
+
C_{\rm comb}(h^2+k^2).
\end{split}
\end{align}
Since
\begin{align}\label{eq:eh}
\begin{split}
\big|
\norm{\nabla\times(\bH(t_i)-\bH^i_h)}{\Ltwo{D}}^2
-
\norm{\nabla\times\be_i}{\Ltwo{D}}^2
\big|
&\lesssim h^2,
\\
\big|
\norm{(\bH(t_i)-\bH^i_h,\lambda(t_i)-\lambda^i_h)}{\XX}^2
-
\norm{\bE_i}{\XX}^2
\big|
&\lesssim h^2,
\end{split}
\end{align}
(which is a result of the approximation properties of $\Pi_{\NN\DD}$ and 
$\Pi_{\SS}$ and the regularity assumptions on $\bH$ and
$\lambda$)
estimate~\cref{eq:convest1} follows immediately.


To prove~\cref{eq:convest2} it suffices to estimate the term with $k$ factor on the
left-hand side of that inequality
because the other terms can be estimated in exactly the same
manner as in the proof of \Cref{thm:strongconvLLG}. By using Taylor's
Theorem and~\cref{eq:eh} we deduce
\begin{align}\label{eq:HHhk}
\norm{\nabla\times(\bH-\bH_{hk})}{\Ltwo{D_T}}^2
&\lesssim
\sum_{i=1}^{N}\Big(
k\norm{\nabla\times(\bH(t_{i+1})-\bH_h^{i+1})}{\Ltwo{D}}^2
+
C_{\bH}
k^3\Big)
\nonumber
\\
&\lesssim
\sum_{i=1}^{N} 
k\norm{\nabla\times \be_{i+1}}{\Ltwo{D}}^2+h^2 + k^2,
\end{align}
where $
C_{\bH}
:=
\norm{\bH}{W^{1,\infty}(0,T;\Hcurl{D})}^2 
$.
On the other hand, it follows from~\cref{eq:mEe} that
\begin{align*}
k&\norm{\nabla\times \be_{i+1}}{\Ltwo{D}}^2\\
&\lesssim
\Big(
\norm{\bm(t_{i})-\bm_h^{i}}{\Hone{D}}^2
-
\norm{\bm(t_{i+1})-\bm_h^{i+1}}{\Hone{D}}^2
\Big)
+
k
\norm{\bm(t_{i})-\bm_h^{i}}{\Hone{D}}^2
\\
&\quad
+
\beta
\Big(
\norm{\bE_i}{h}^2
-
\norm{\bE_{i+1}}{h}^2
\Big)
+
k
\norm{\bE_i}{h}^2
+
k(h^2 + k^2),
\end{align*}
which then implies by using telescoping series,~\cref{eq:almostconvest1}, and~\cref{eq:eh}
\begin{align*}
\sum_{i=1}^N
k\norm{\nabla\times \be_{i+1}}{\Ltwo{D}}^2
&\lesssim
\norm{\bm^00-\bm_h^0}{\Hone{D}}^2
+
\norm{\bE_0}{h}^2
\\
&\quad
+
\max_{0\le i\le N}
\Big(
\norm{\bm(t_{i})-\bm_h^{i}}{\Hone{D}}^2
+
\norm{\bE_i}{h}^2
\Big)
+
h^2 + k^2
\\
&\lesssim
\norm{\bm^0-\bm_h^0}{\Hone{D}}^2
+
\norm{\bH^0-\bH_h^0}{\Ltwo{D}}^2
+
\norm{\lambda^0-\lambda_h^0}{H^{1/2}(\Gamma)}^2
\\
&\quad
+
\max_{0\le i\le N}
\Big(
\norm{\bm(t_{i})-\bm_h^{i}}{\Hone{D}}^2
+
\norm{\bH(t_i)-\bH_h^i}{\Ltwo{D}}^2
\\
&\qquad\qquad\qquad+
\norm{\lambda(t_i)-\lambda_h^i}{H^{1/2}(\Gamma)}^2
\Big)
+
h^2 + k^2.
\end{align*}
The required result now follows from~\cref{eq:HHhk} and~\cref{eq:convest1}.
Uniqueness is also obtained as in the
proof of \Cref{thm:strongconvLLG}, completing the proof the
theorem.
\end{proof}

\section{Numerical experiments}\label{section:numerics}
The following numerical experiments are carried out by use of the
FEM toolbox FEniCS~\cite{fenics} (\texttt{fenicsproject.org})
and the BEM toolbox BEM++~\cite{bempp} (\texttt{bempp.org}).
We use GMRES to solve the linear systems and blockwise diagonal
scaling as preconditioners.

The values of the constants in these examples are taken from the
standard problem \#1 proposed by the Micromagnetic Modelling Activity
Group at the National Institute of Standards and
Technology~\cite{mumag}.
As domain serves the unit cube $D=[0,1]^3$ with initial conditions
\begin{align*}
 \bm^0(x_1,x_2,x_3):=\begin{cases} (0,0,-1)&\text{for } d(x)\geq 1/4,\\
                      (2Ax_1,2Ax_2,A^2-d(x))/(A^2+d(x))&\text{for }d(x)<1/4,
                     \end{cases}
\end{align*}
where $d(x):= |x_1-0.5|^2+|x_2-0.5|^2$ and $A:=(1-2\sqrt{d(x)})^4/4$ and
\begin{align*}
 \bH^0= \begin{cases} (0,0,3)&\text{in } D,\\
         (0,0,3)-\bm^0&\text{in } D^\ast.
        \end{cases}
\end{align*}
We choose the constants
\begin{align*}
 \alpha=0.5,\quad \sigma=\begin{cases}1&\text{in }D,\\ 0& \text{in }D^\ast,\end{cases}\quad \mu_0=1.25667\times 10^{-6},\quad C_e=\frac{2.6\times 10^{-11}}{\mu_0 \,6.4\times 10^{11}}.
\end{align*}
\subsection{Example 1}
For time and space discretisation of $D_T:= [0,5]\times D$, we apply a
uniform partition in space ($h=0.1$) and time ($k=0.002$).
\Cref{fig:en} plots the corresponding energies over time.
\Cref{fig:m} shows a series of magnetisations $\bm(t_i)$ at certain times $t_i\in[0,5]$. 
\Cref{fig:h} shows that same for the magnetic field $\bH(t_i)$.
\begin{figure}
\centering
\psfrag{energy}{\tiny energy}
\psfrag{time}{\tiny time $t$}
\psfrag{menergy}{\tiny $\norm{\nabla\bm_{hk}(t)}{\Ltwo{D}}$ }
\psfrag{henergy}{\tiny $\norm{\bH_{hk}(t)}{\Hcurl{D}}$}
\psfrag{sum}{\tiny $\norm{\nabla\bm_{hk}(t)}{\Ltwo{D}}+\norm{\bH_{hk}(t)}{\Hcurl{D}}$}
\includegraphics[width=0.6\textwidth]{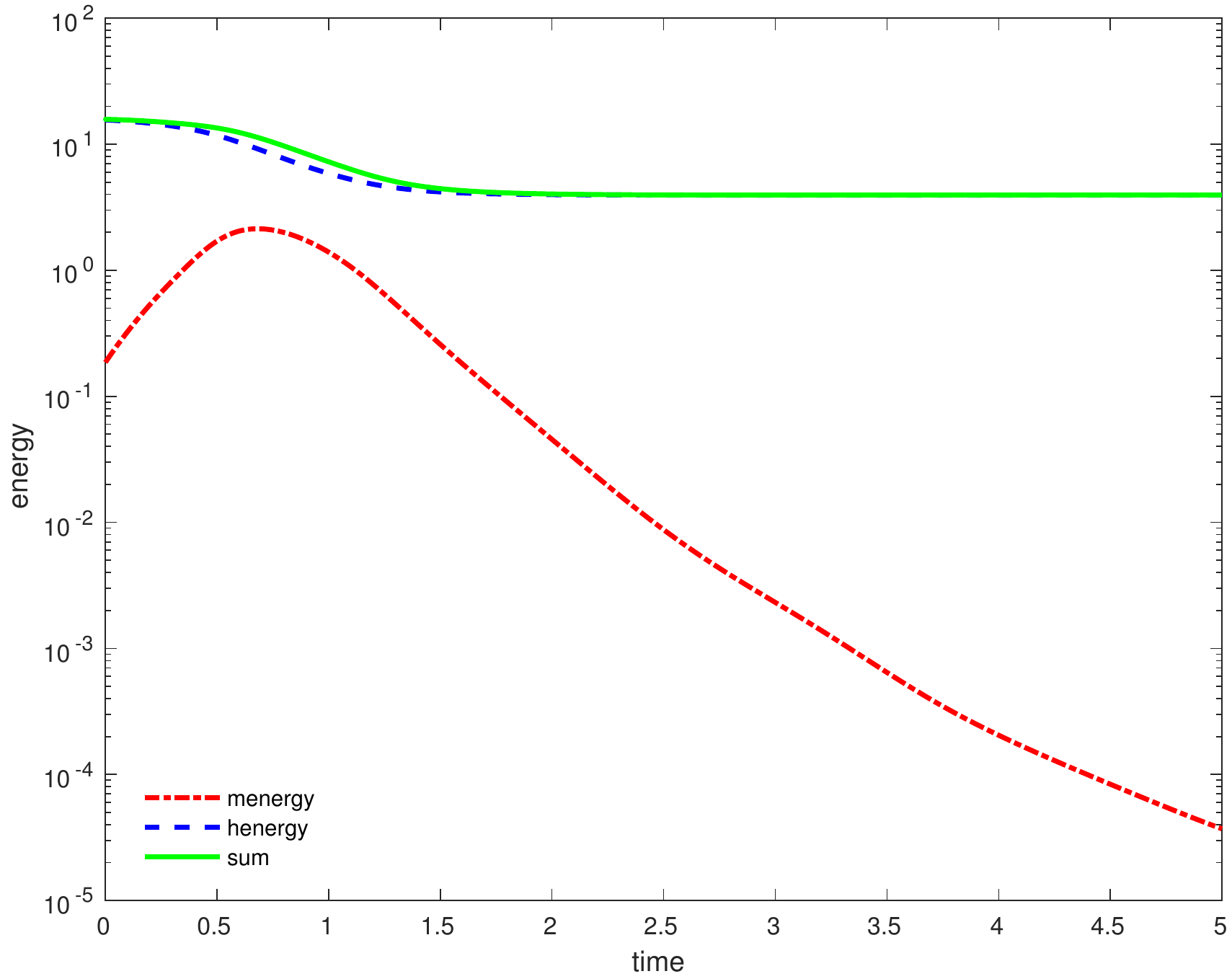}
\caption{\comment{Plot of
$\norm{\nabla\bm_{hk}(t)}{\Ltwo{D}}$ and $\norm{\bH_{hk}(t)}{\Hcurl{D}}$ over the time.}}
\label{fig:en}
\end{figure}

\begin{figure}
\centering
\includegraphics[width=0.24\textwidth]{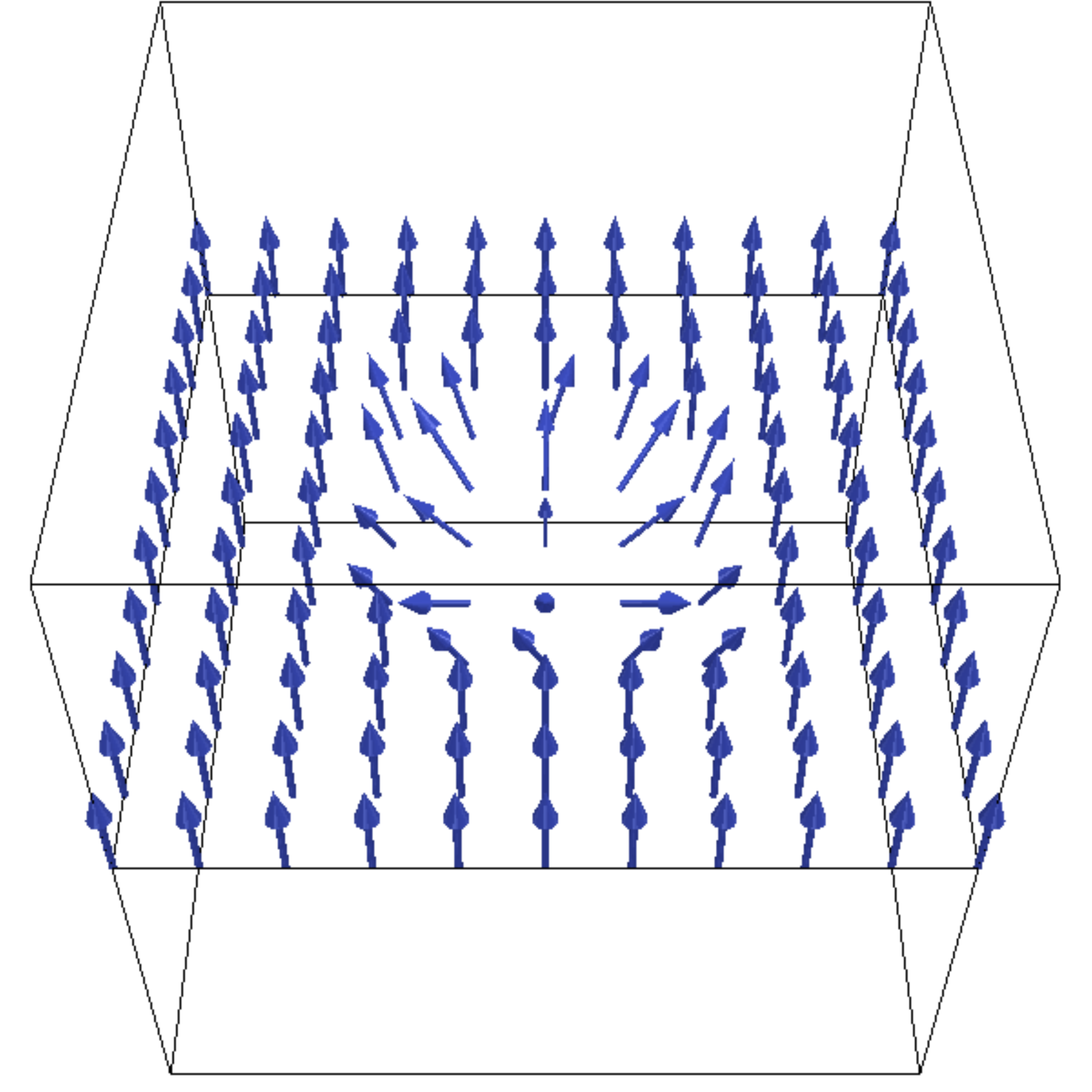}
\includegraphics[width=0.24\textwidth]{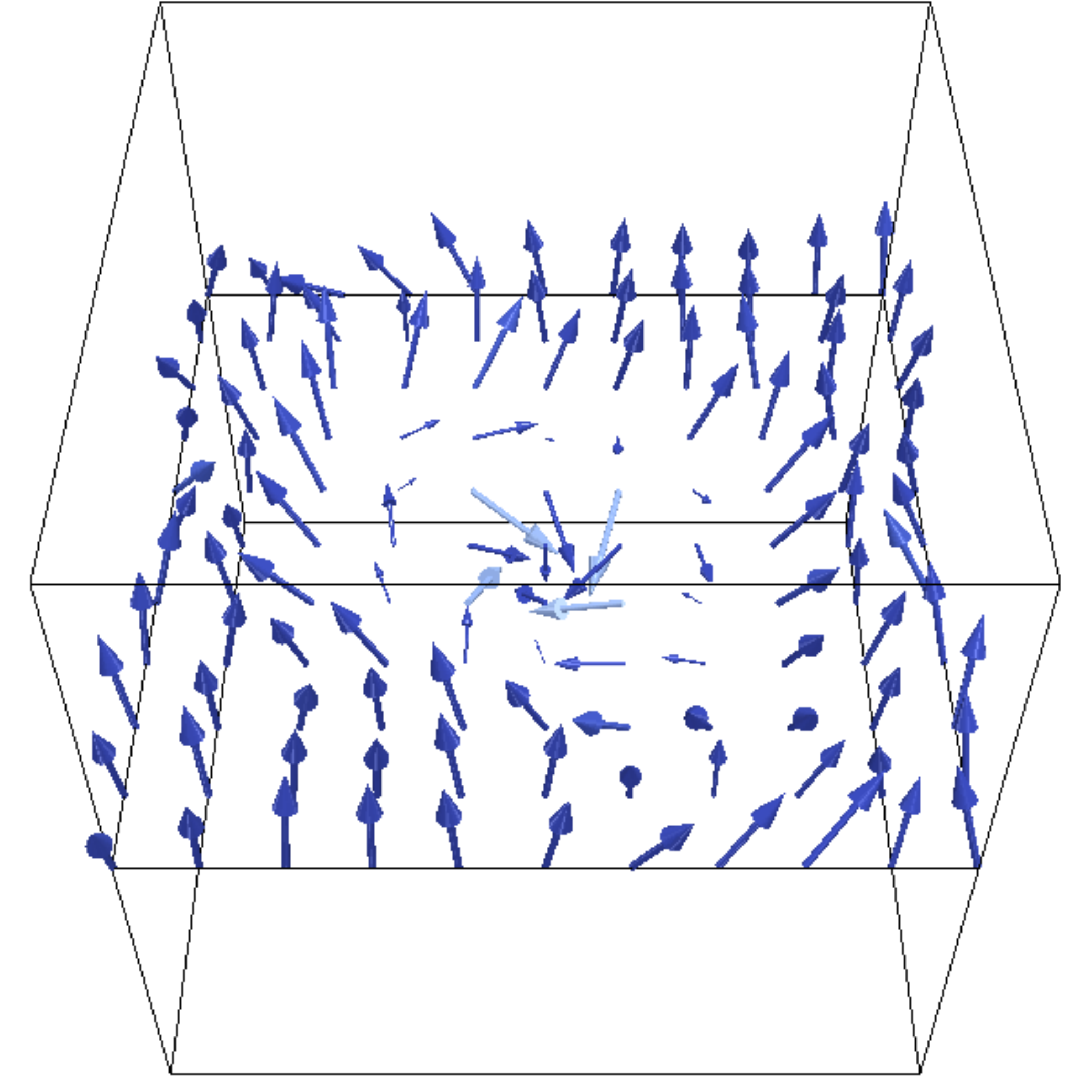}
\includegraphics[width=0.24\textwidth]{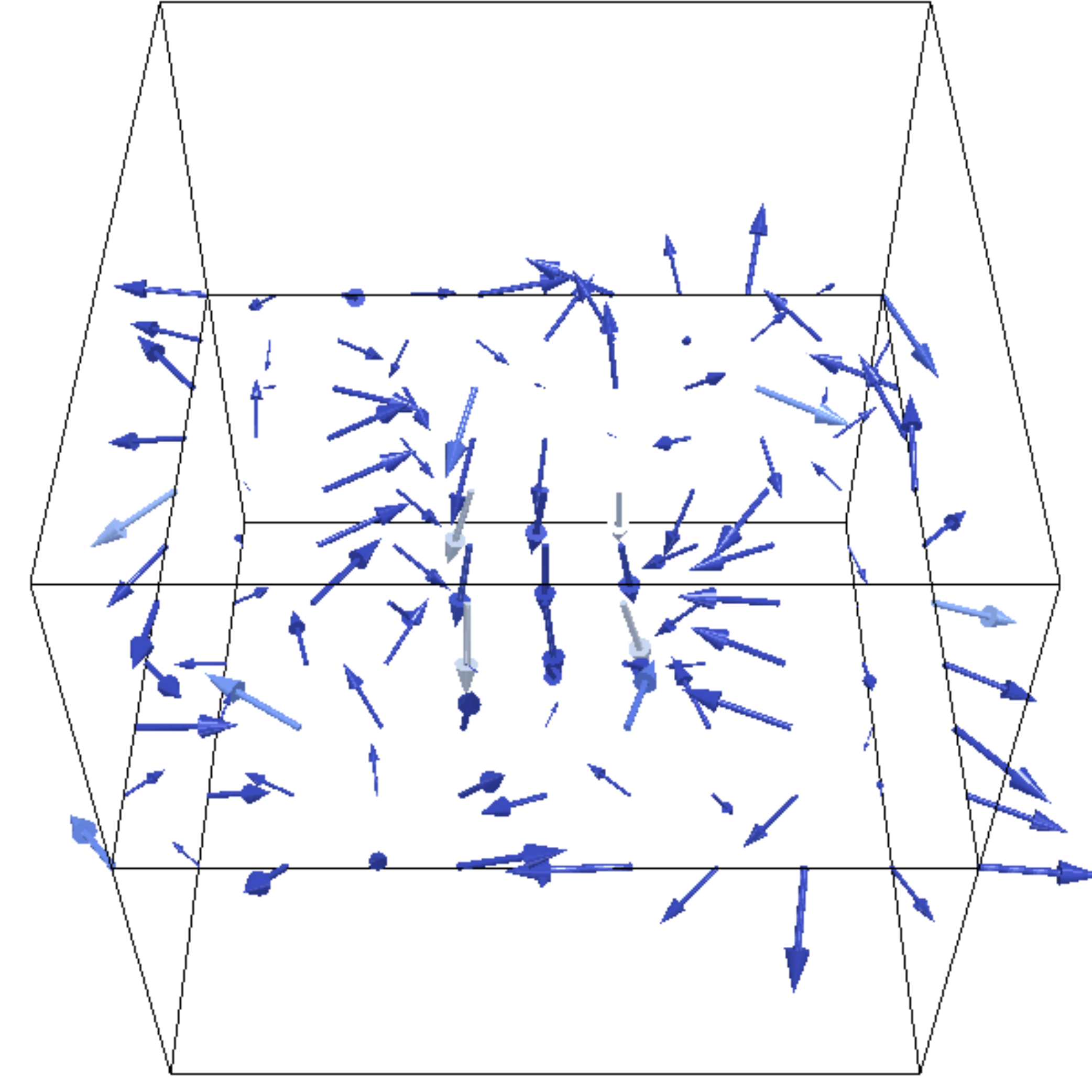}
\includegraphics[width=0.24\textwidth]{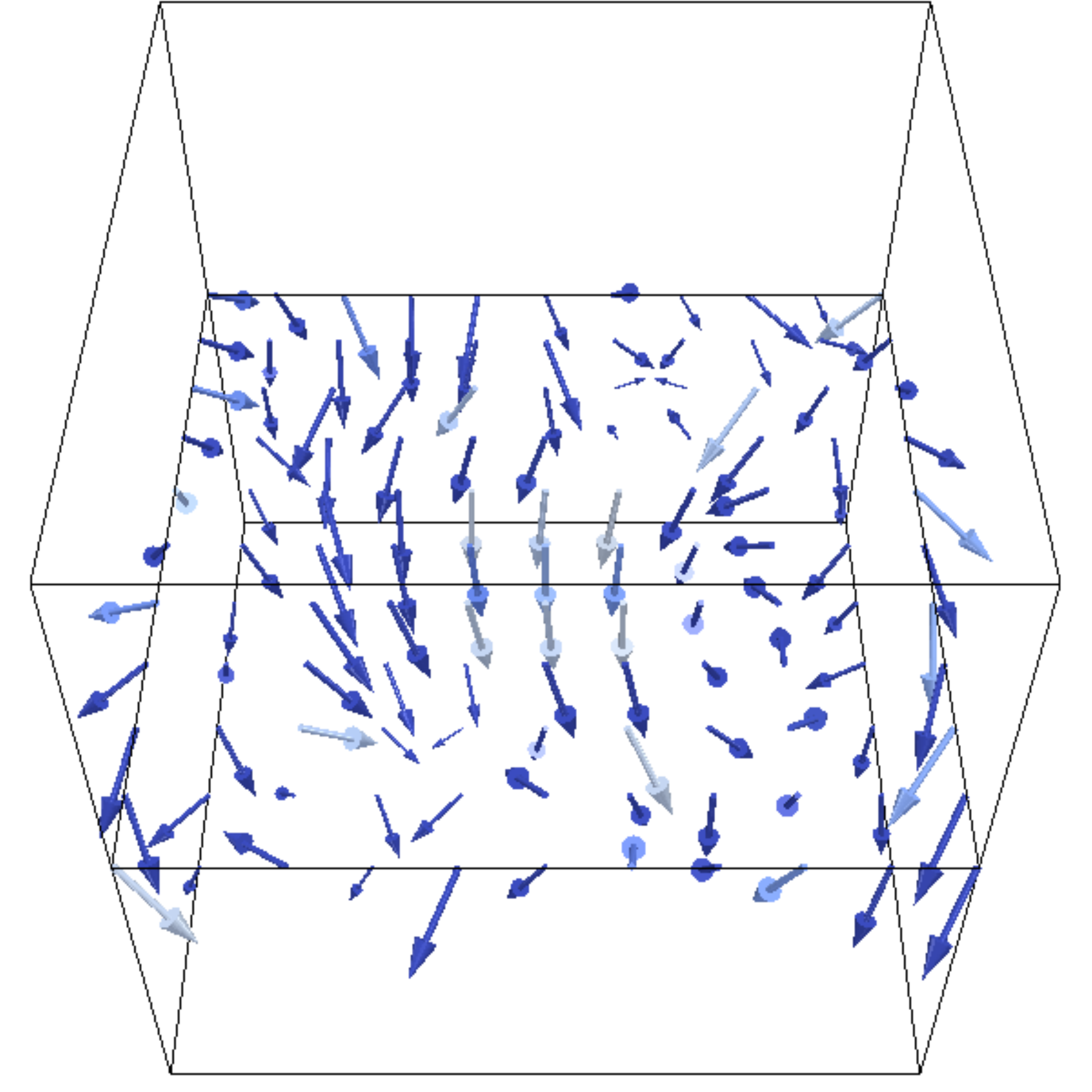}
\includegraphics[width=0.24\textwidth]{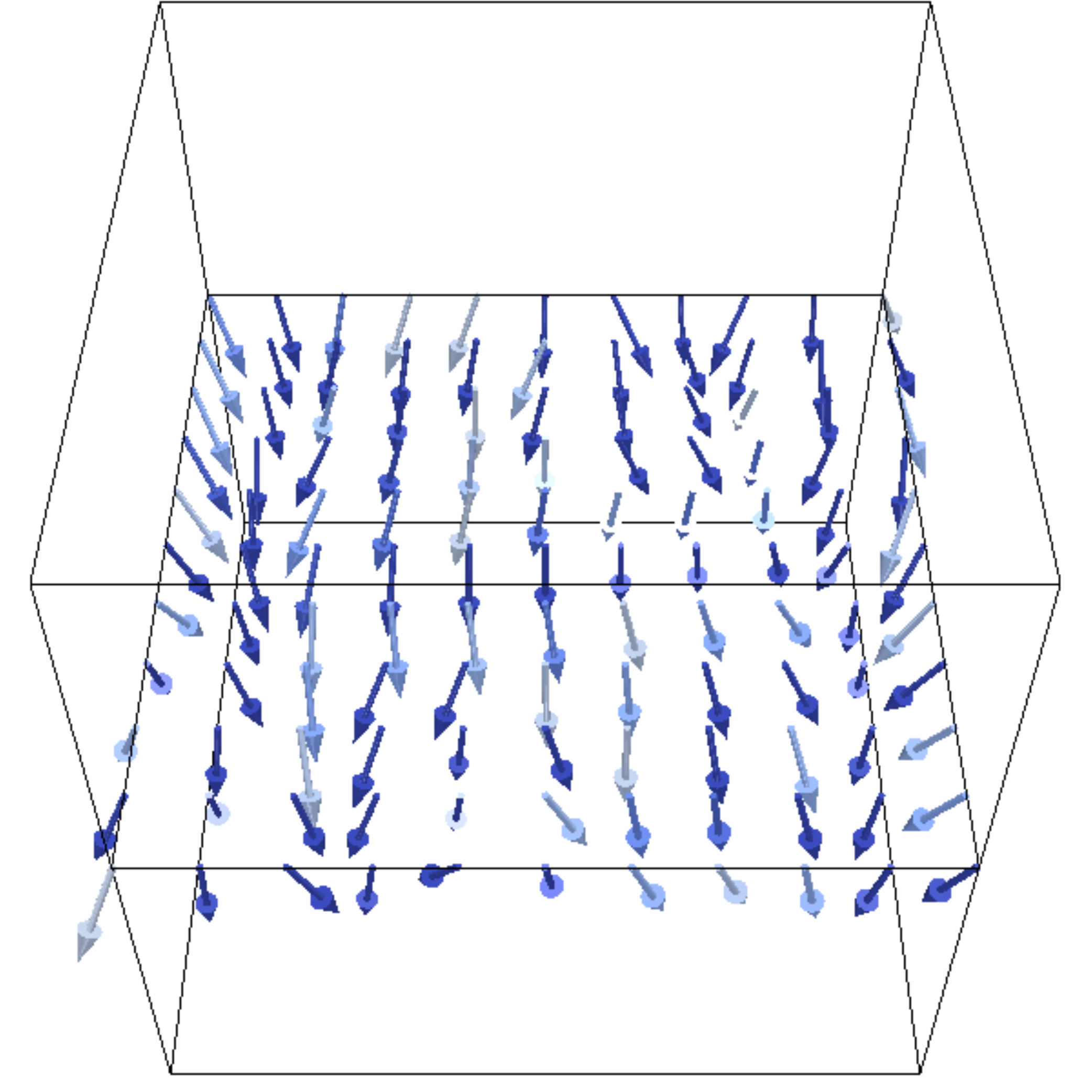}
\includegraphics[width=0.24\textwidth]{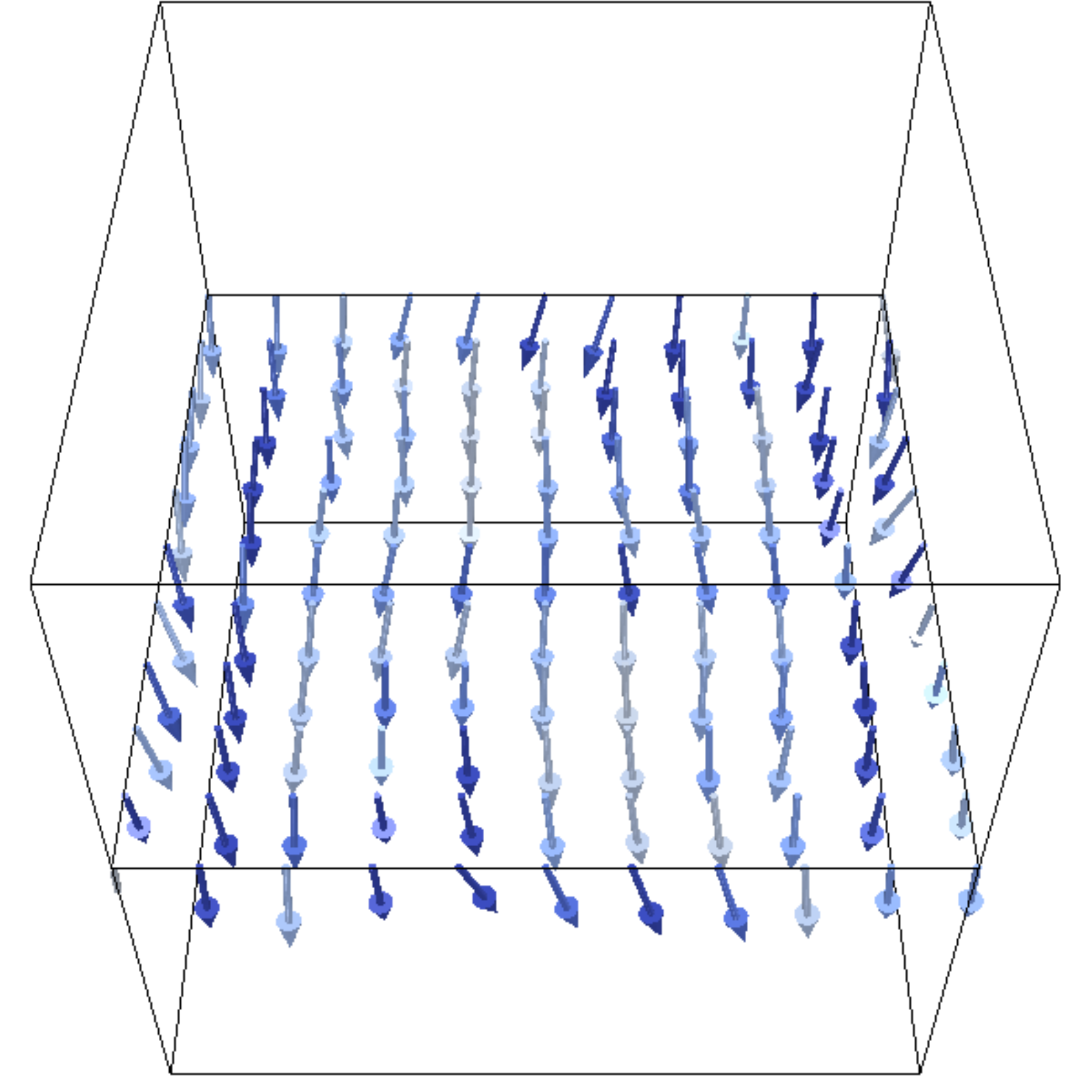}
\includegraphics[width=0.24\textwidth]{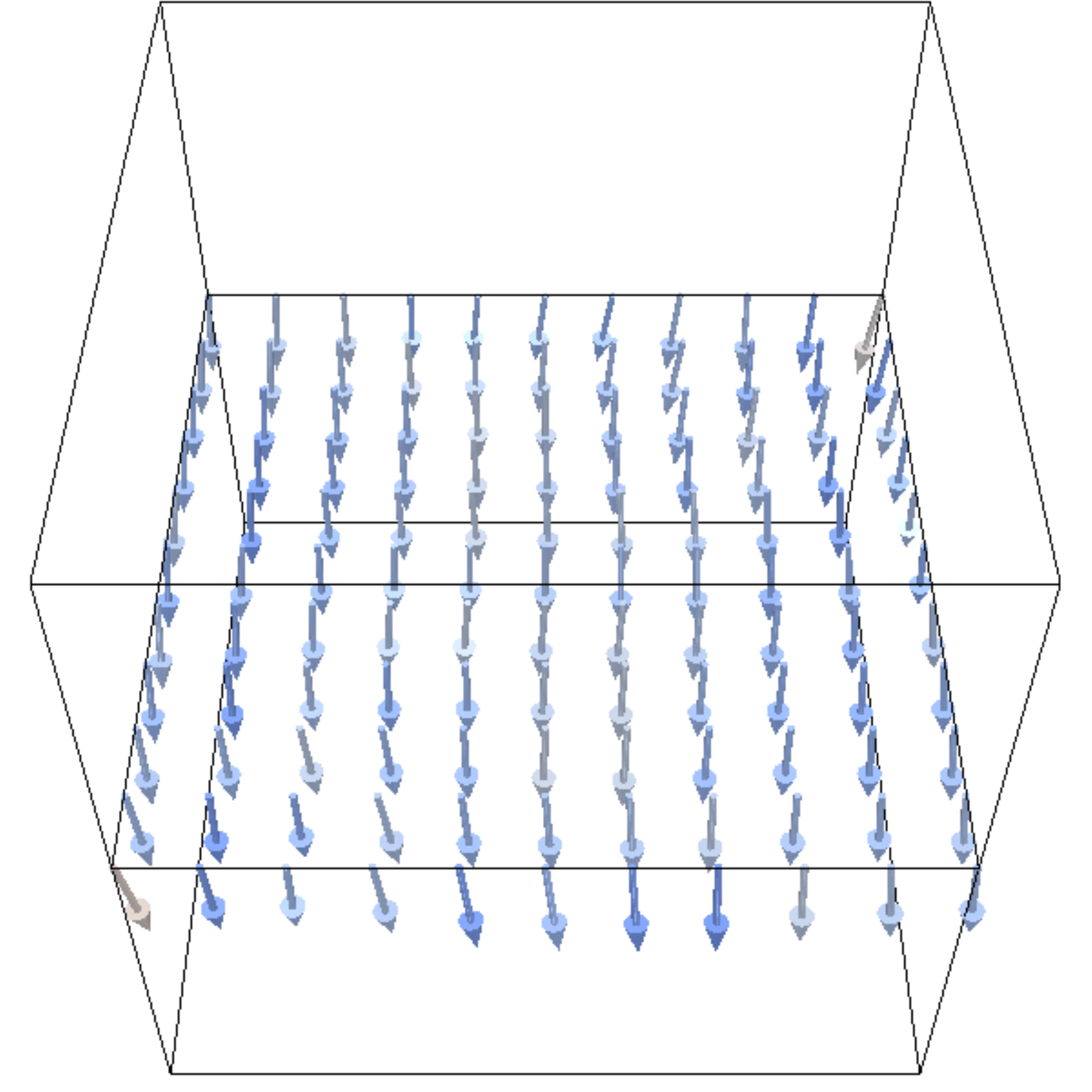}
\includegraphics[width=0.24\textwidth]{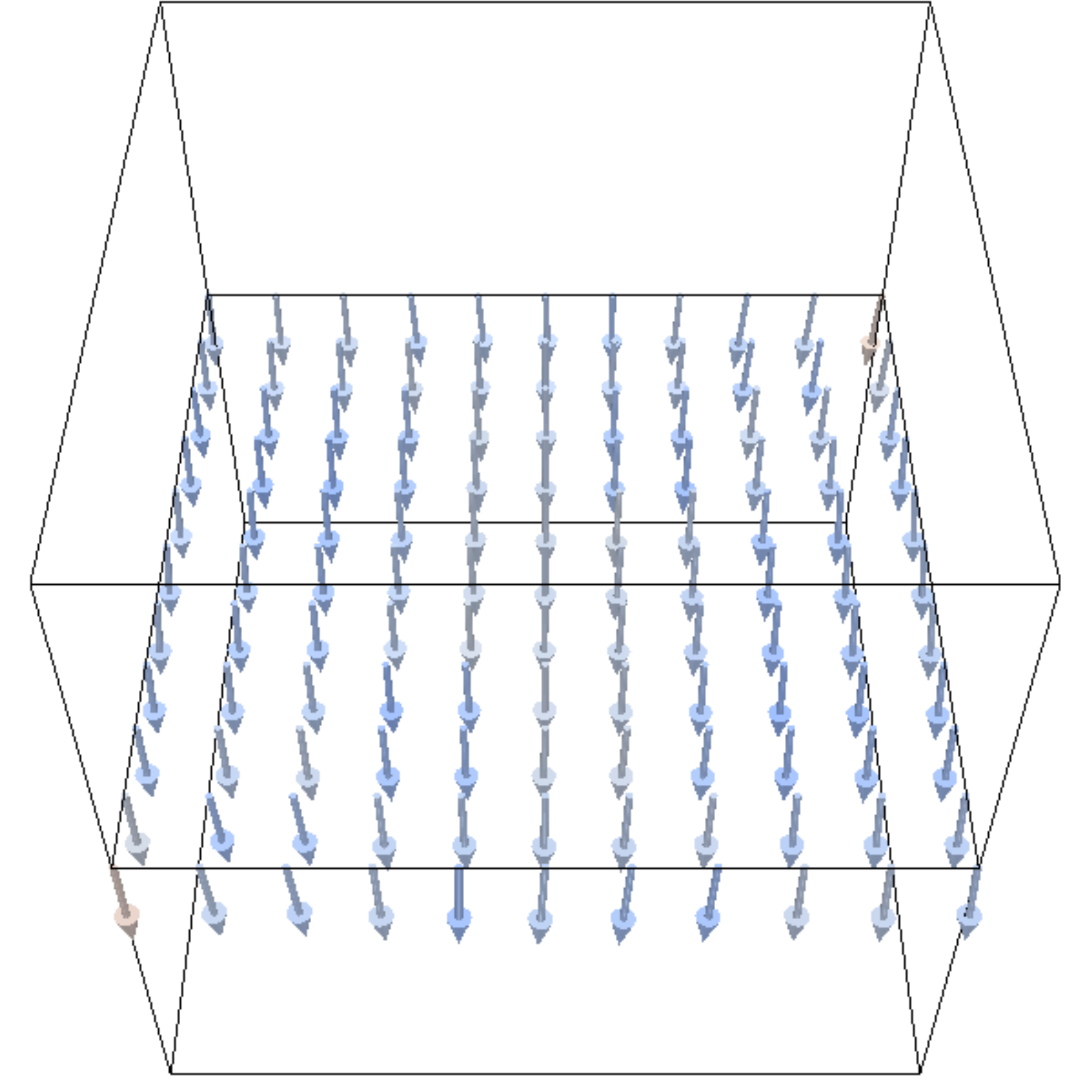}
\includegraphics[width=0.24\textwidth]{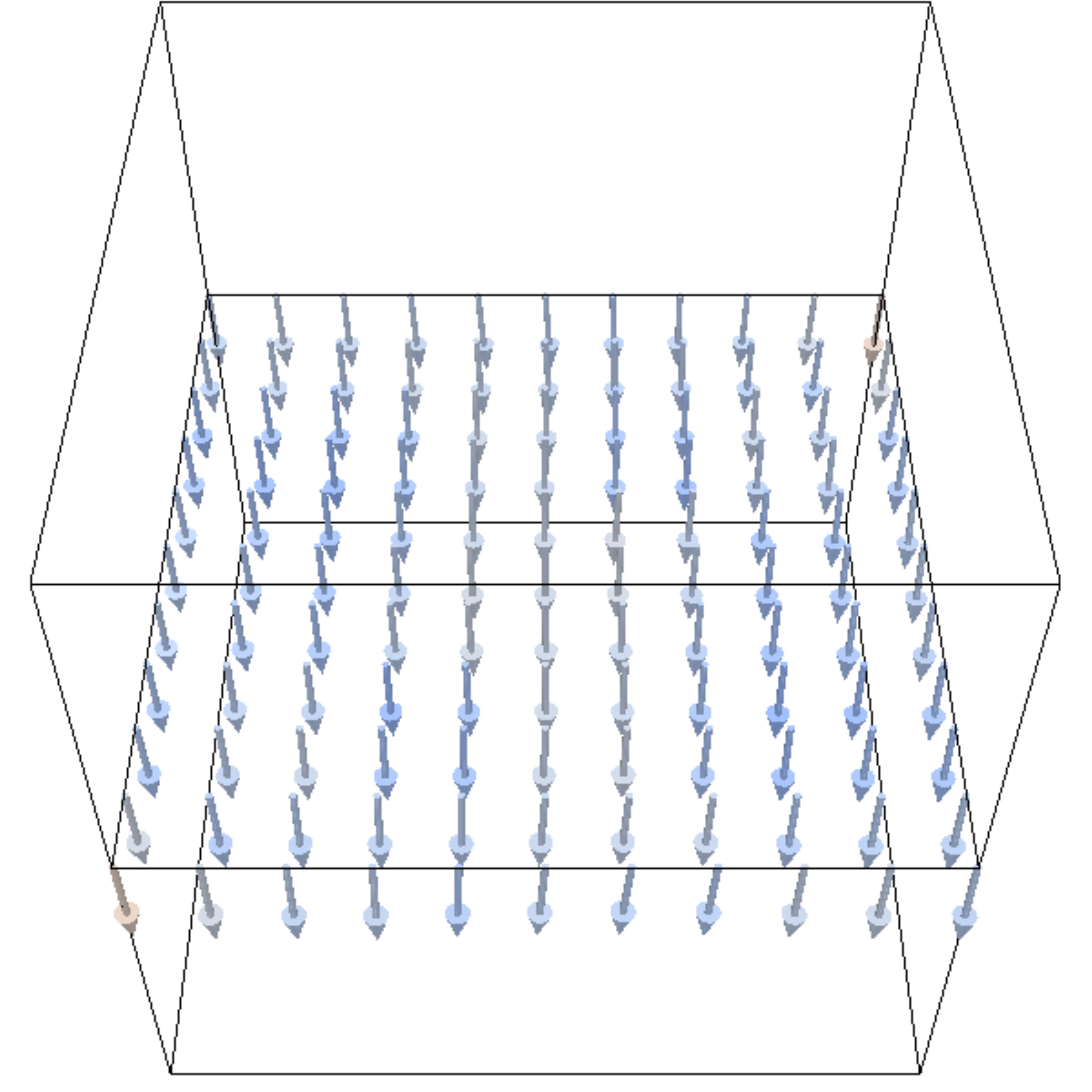}
\includegraphics[width=0.24\textwidth]{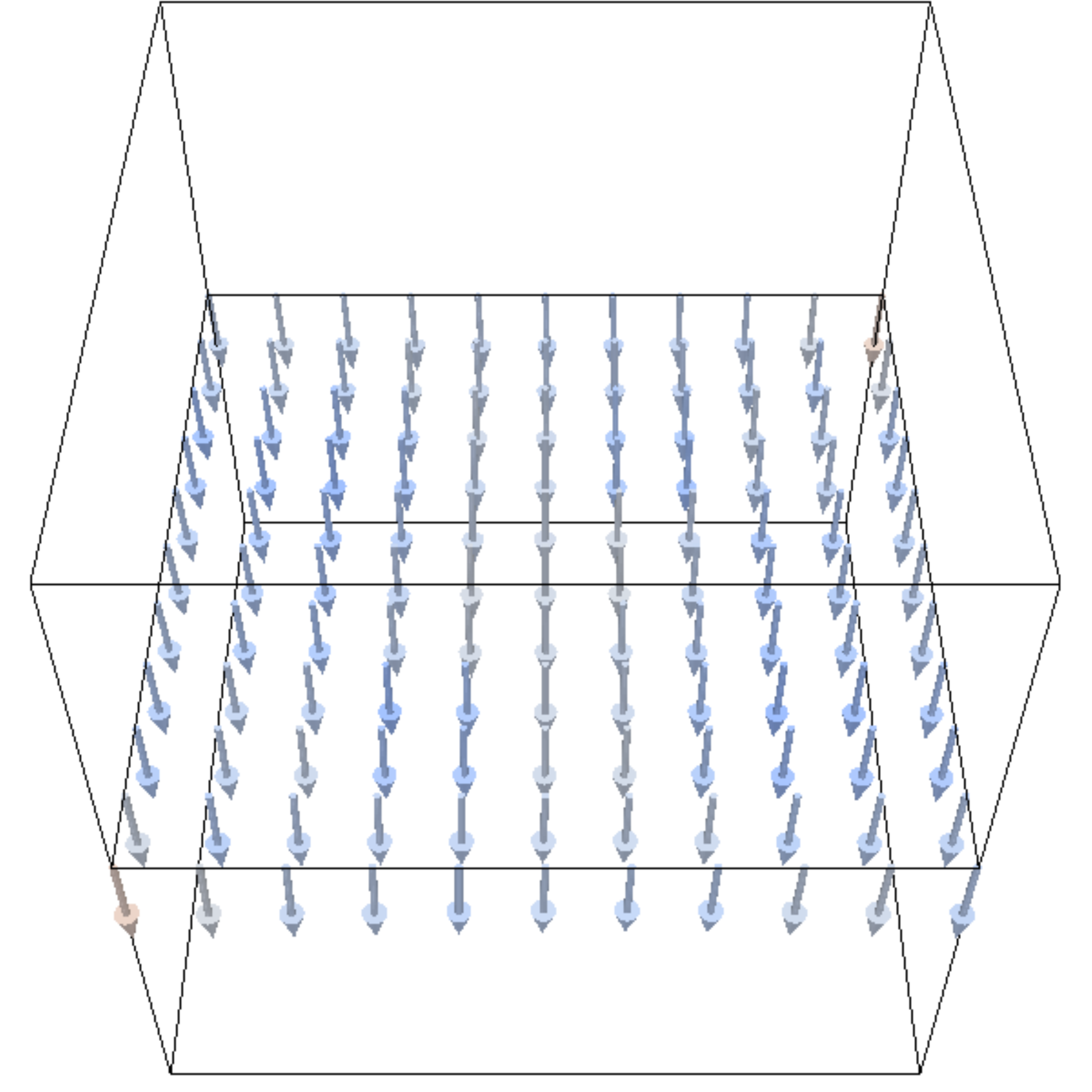}
\includegraphics[width=0.24\textwidth]{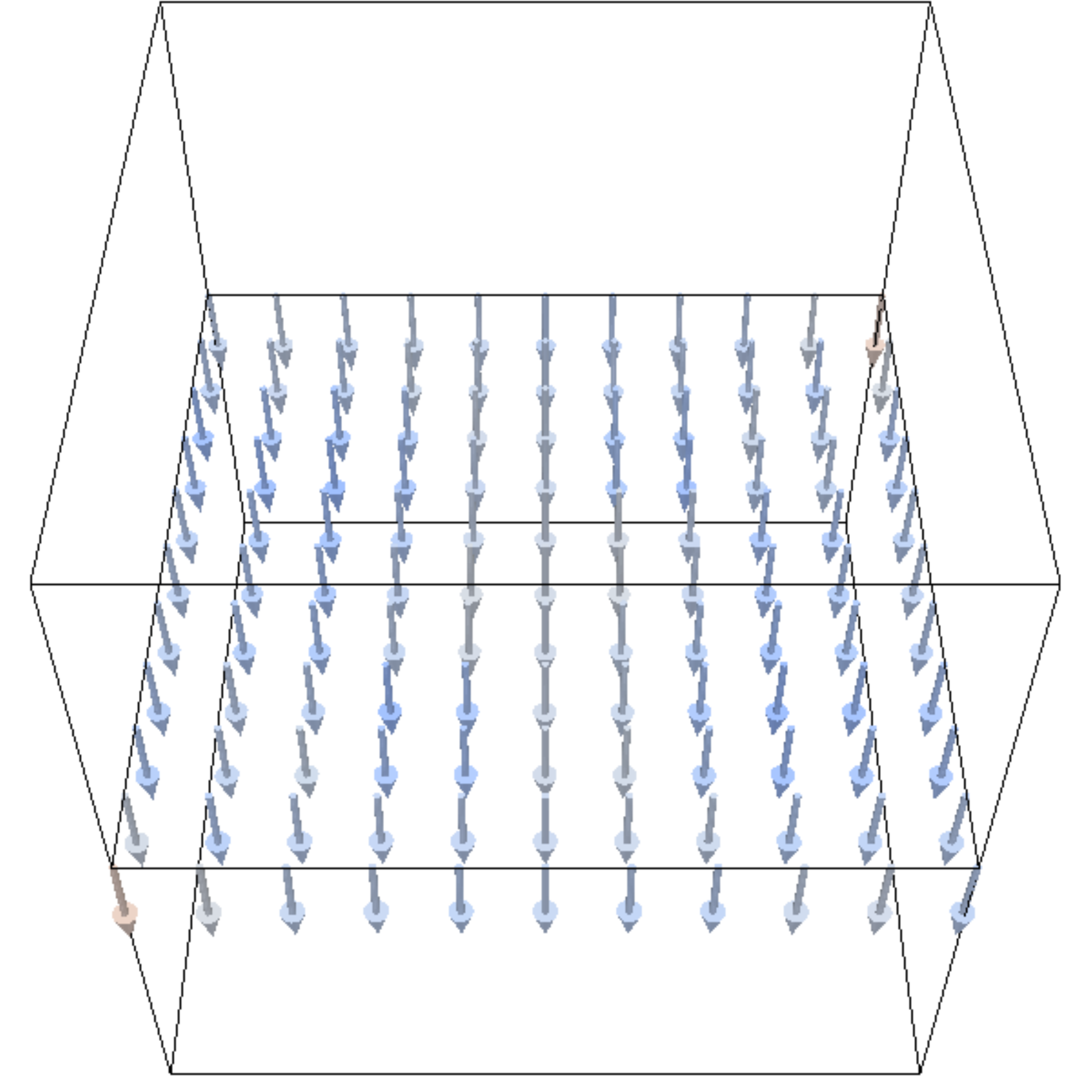}
\includegraphics[width=0.24\textwidth]{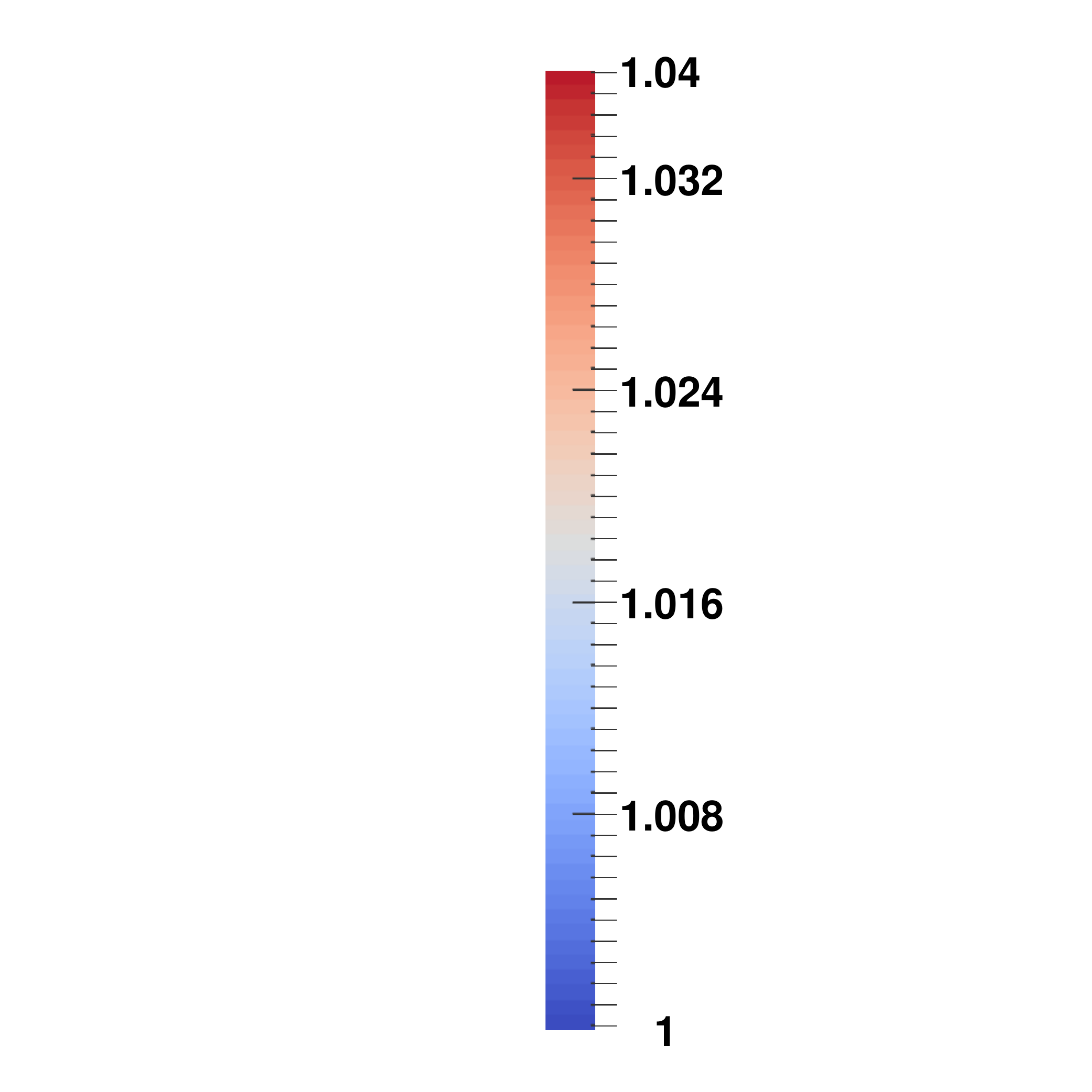}
\caption{Slice of the magnetisation $\bm_{hk}(t_i)$ at $[0,1]^2\times \{1/2\}$ for $i=0,\ldots,10$ with $t_i=0.2i$. The colour of the vectors represents the magnitude $|\bm_{hk}|$.
We observe that the magnetisation aligns itself with the initial magnetic field $\bH^0$ by performing a damped precession.}
\label{fig:m}
\end{figure}

\begin{figure}
\centering
\includegraphics[width=0.24\textwidth]{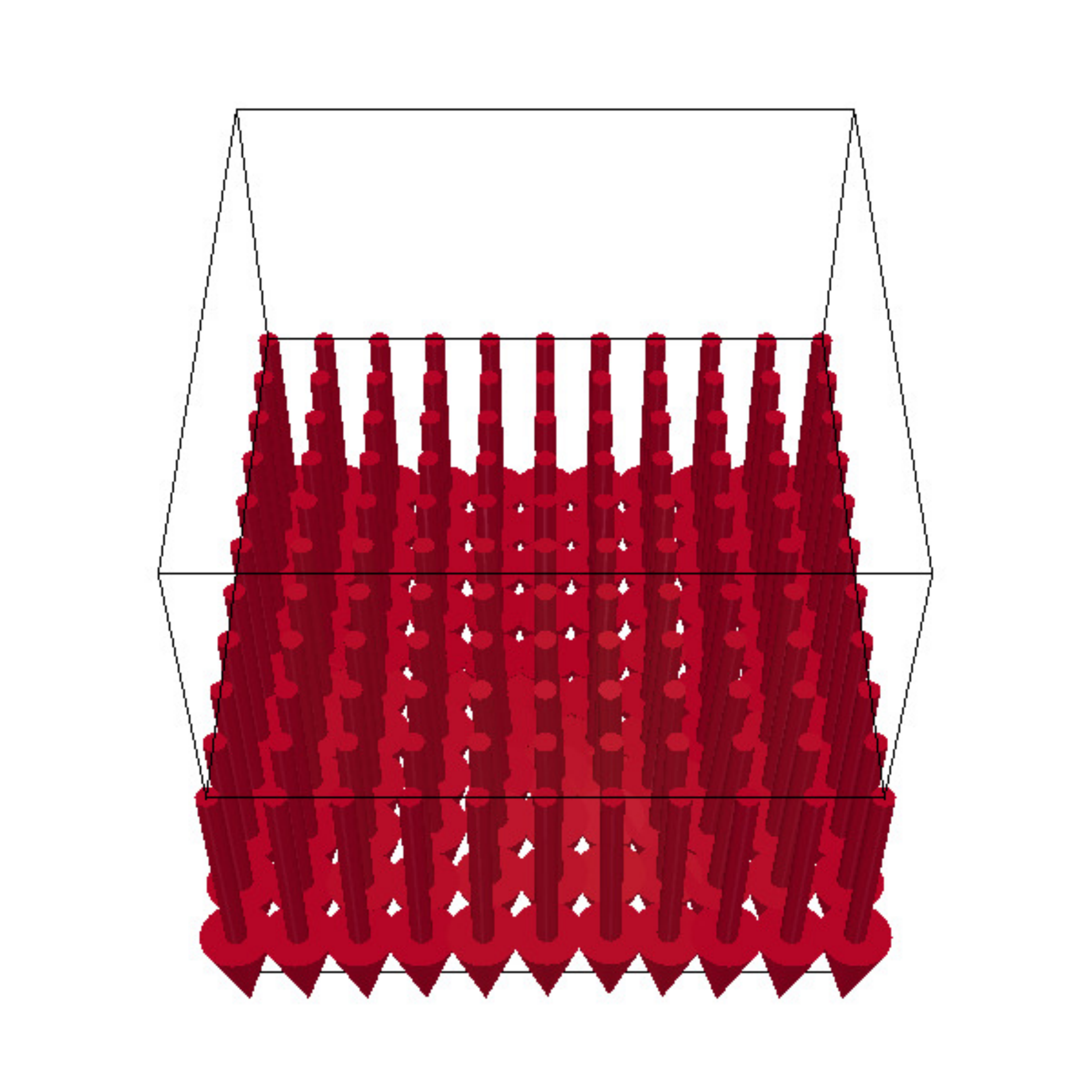}
\includegraphics[width=0.24\textwidth]{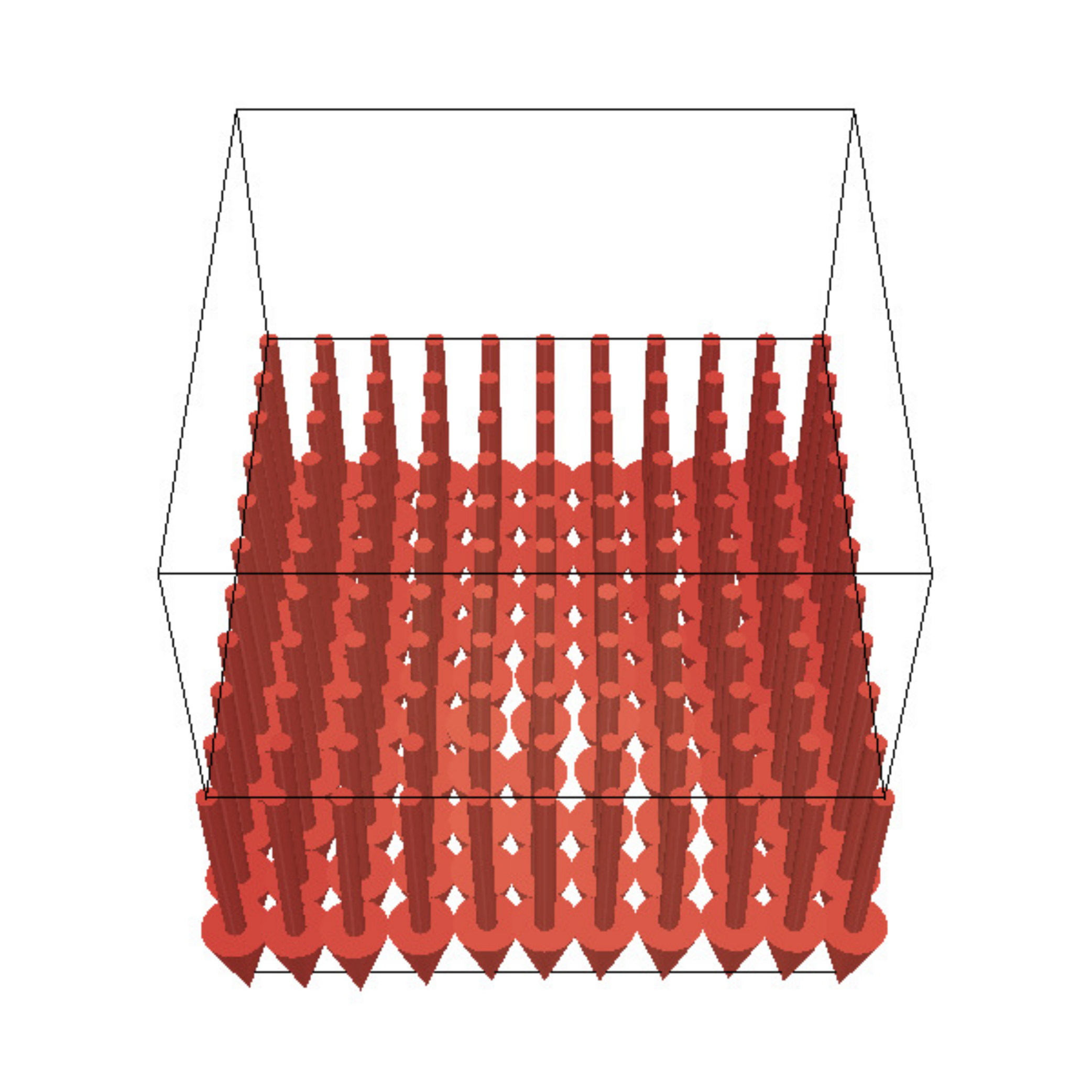}
\includegraphics[width=0.24\textwidth]{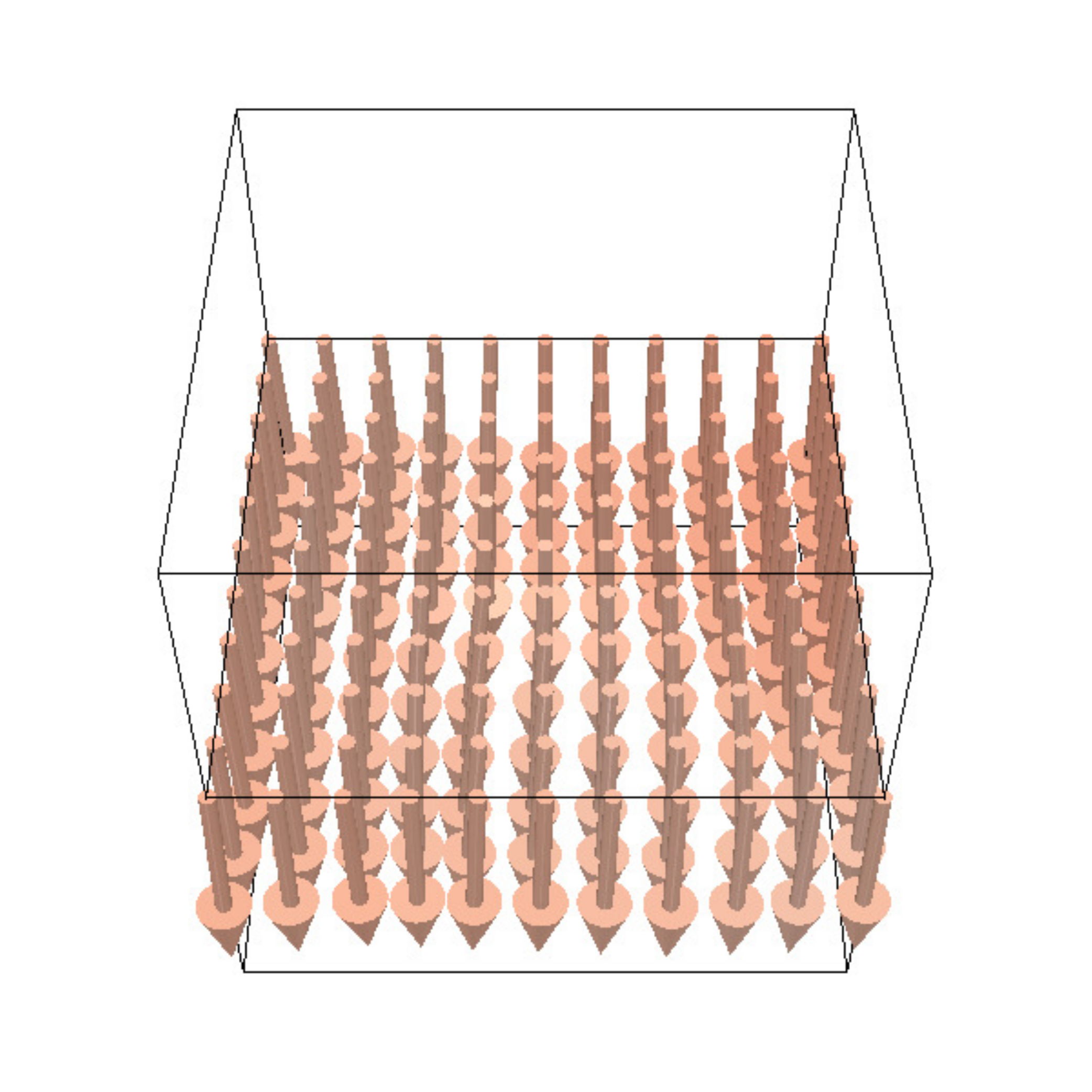}
\includegraphics[width=0.24\textwidth]{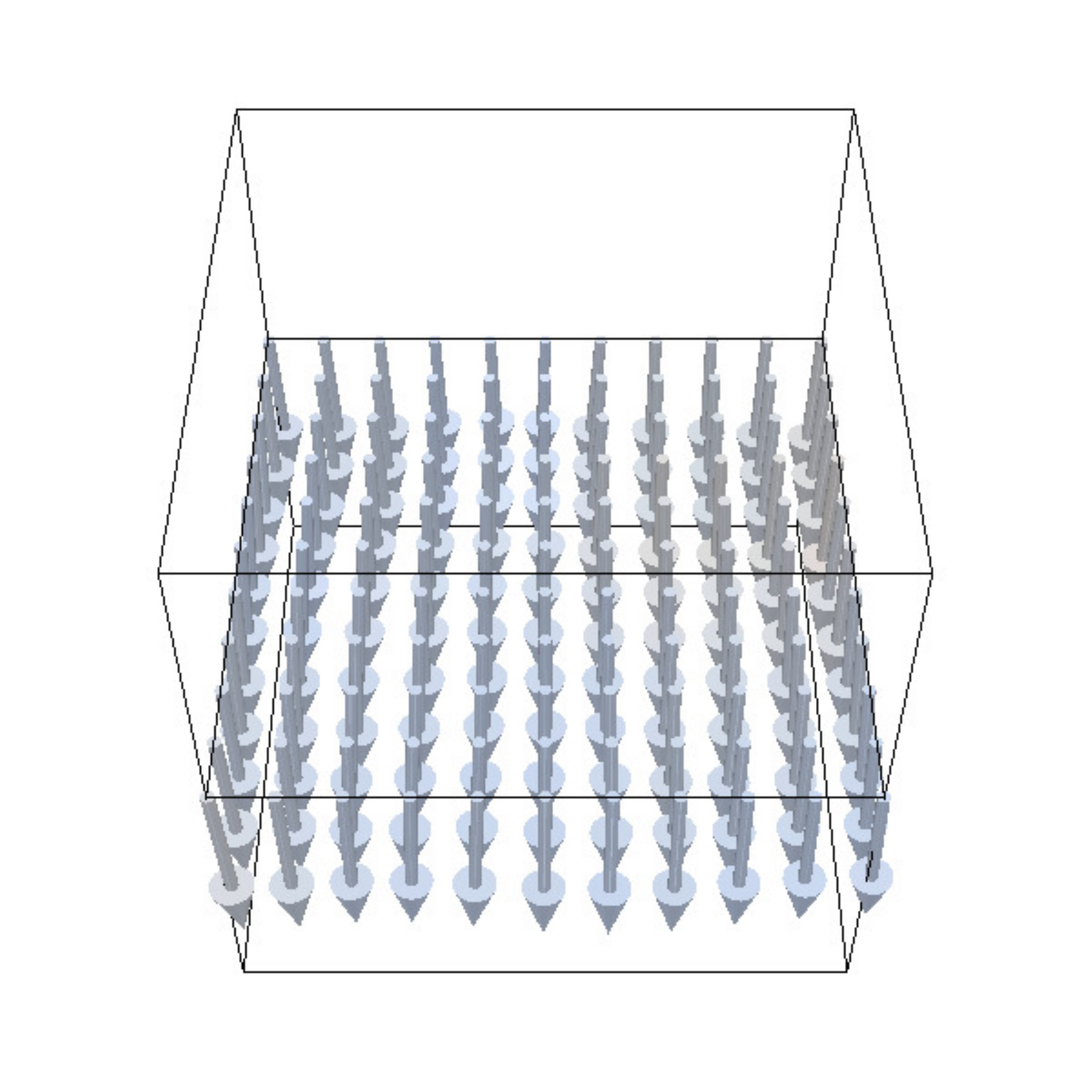}
\includegraphics[width=0.24\textwidth]{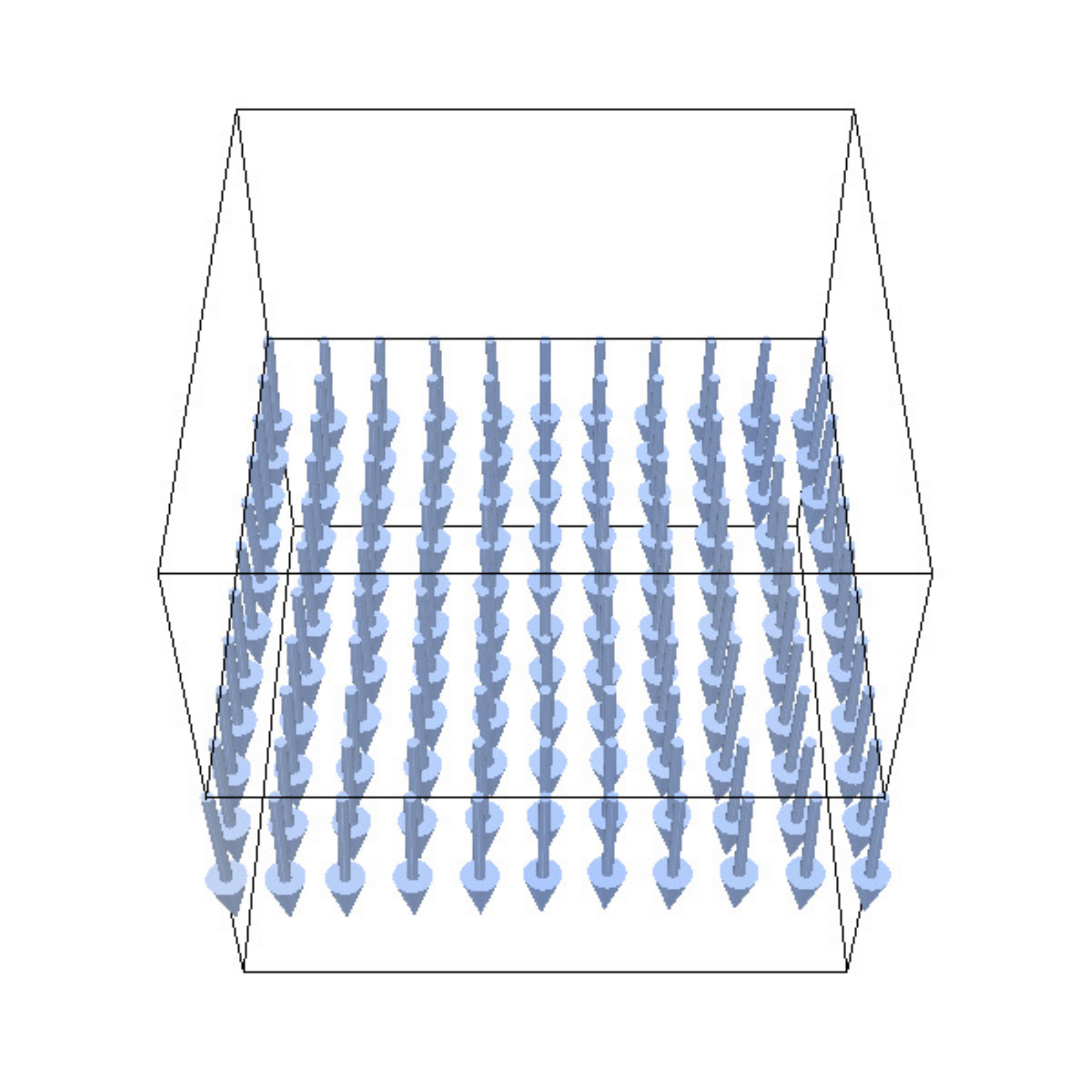}
\includegraphics[width=0.24\textwidth]{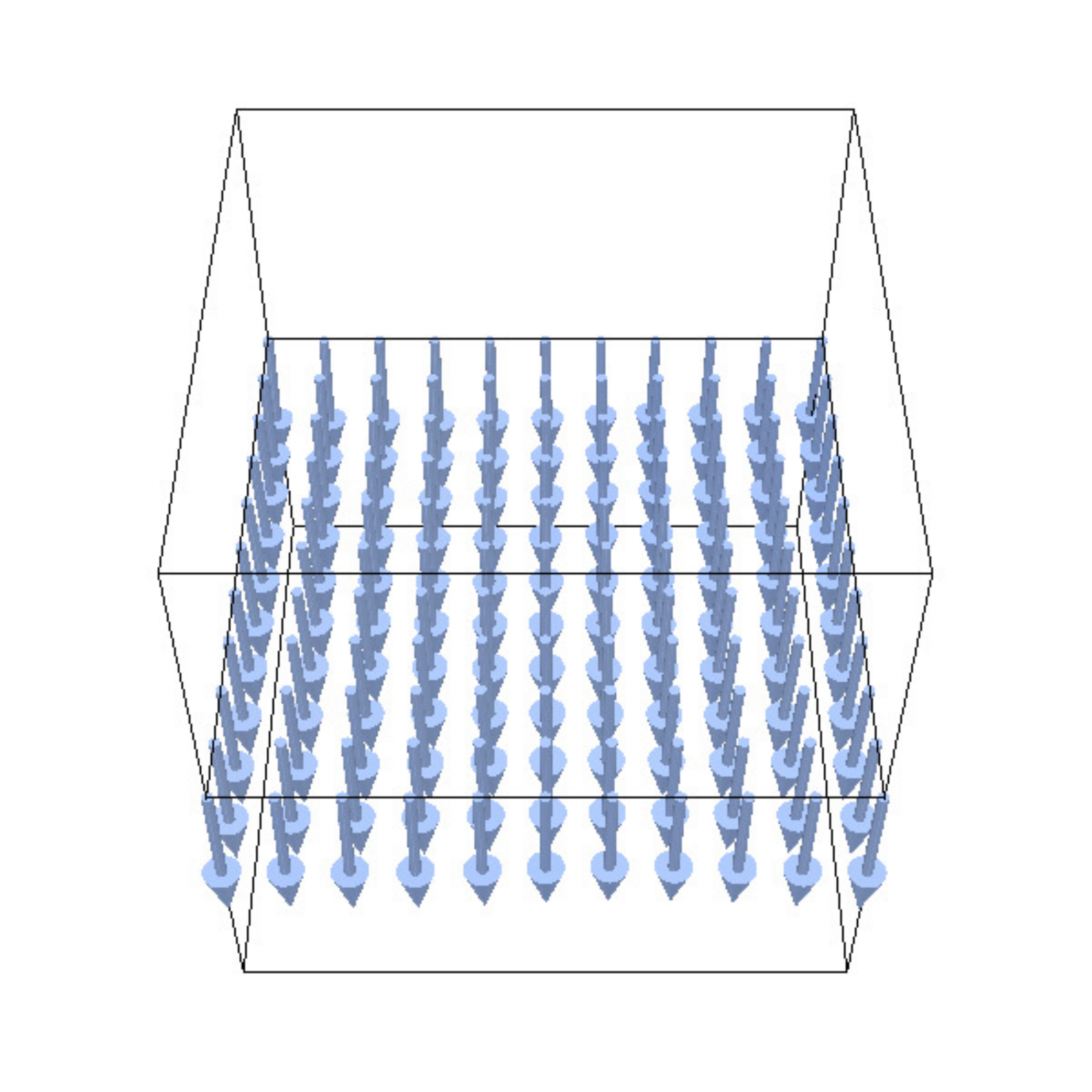}
\includegraphics[width=0.24\textwidth]{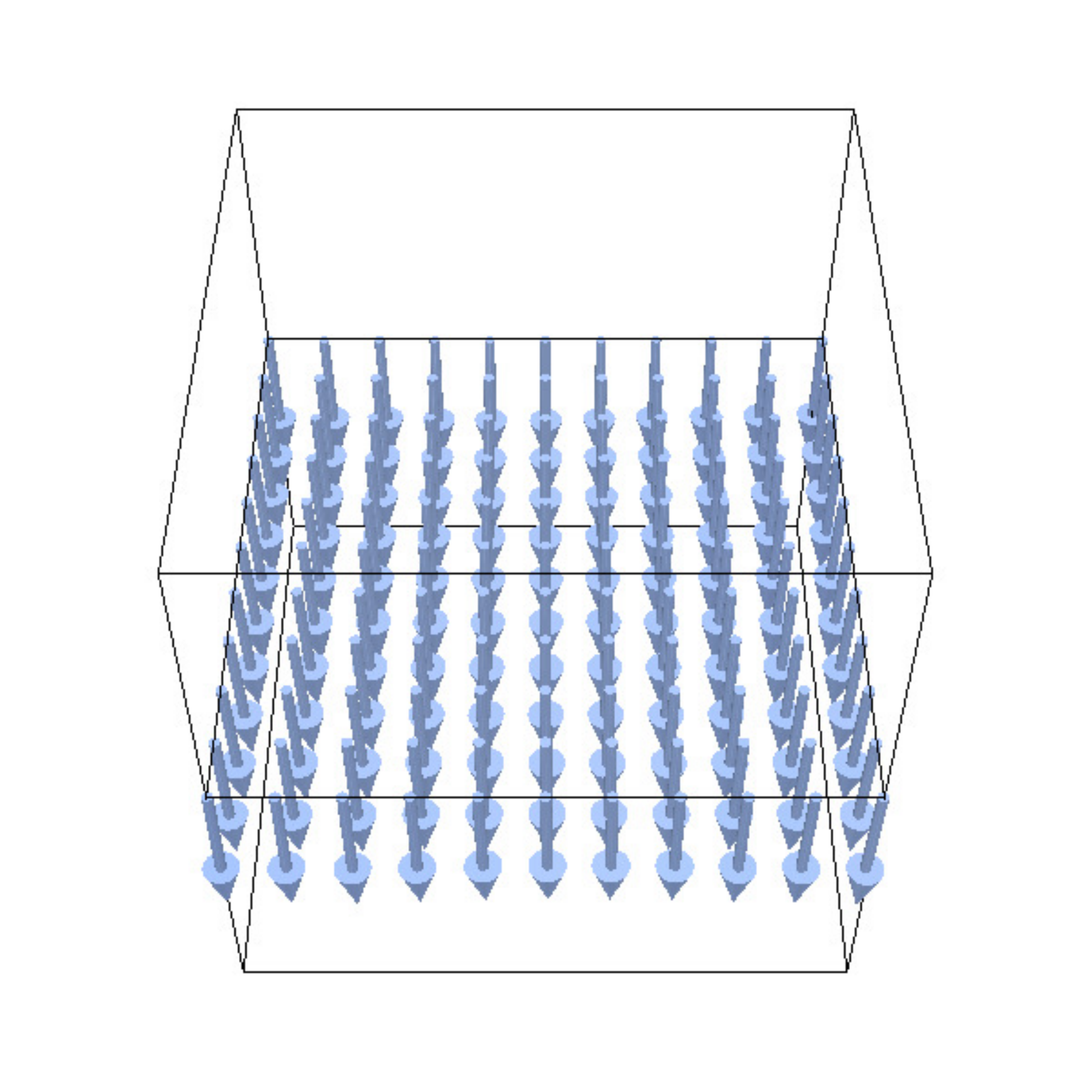}
\includegraphics[width=0.24\textwidth]{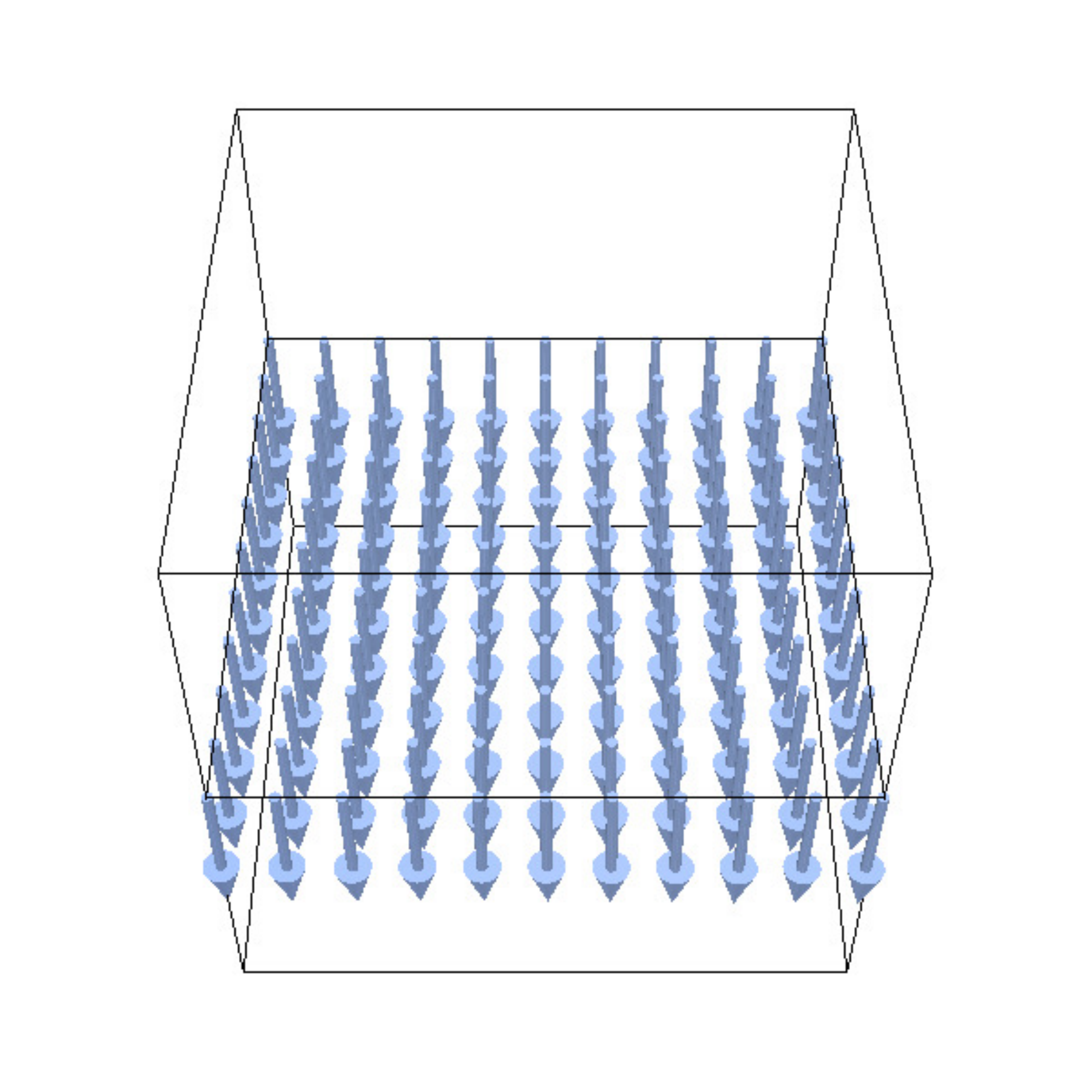}
\includegraphics[width=0.24\textwidth]{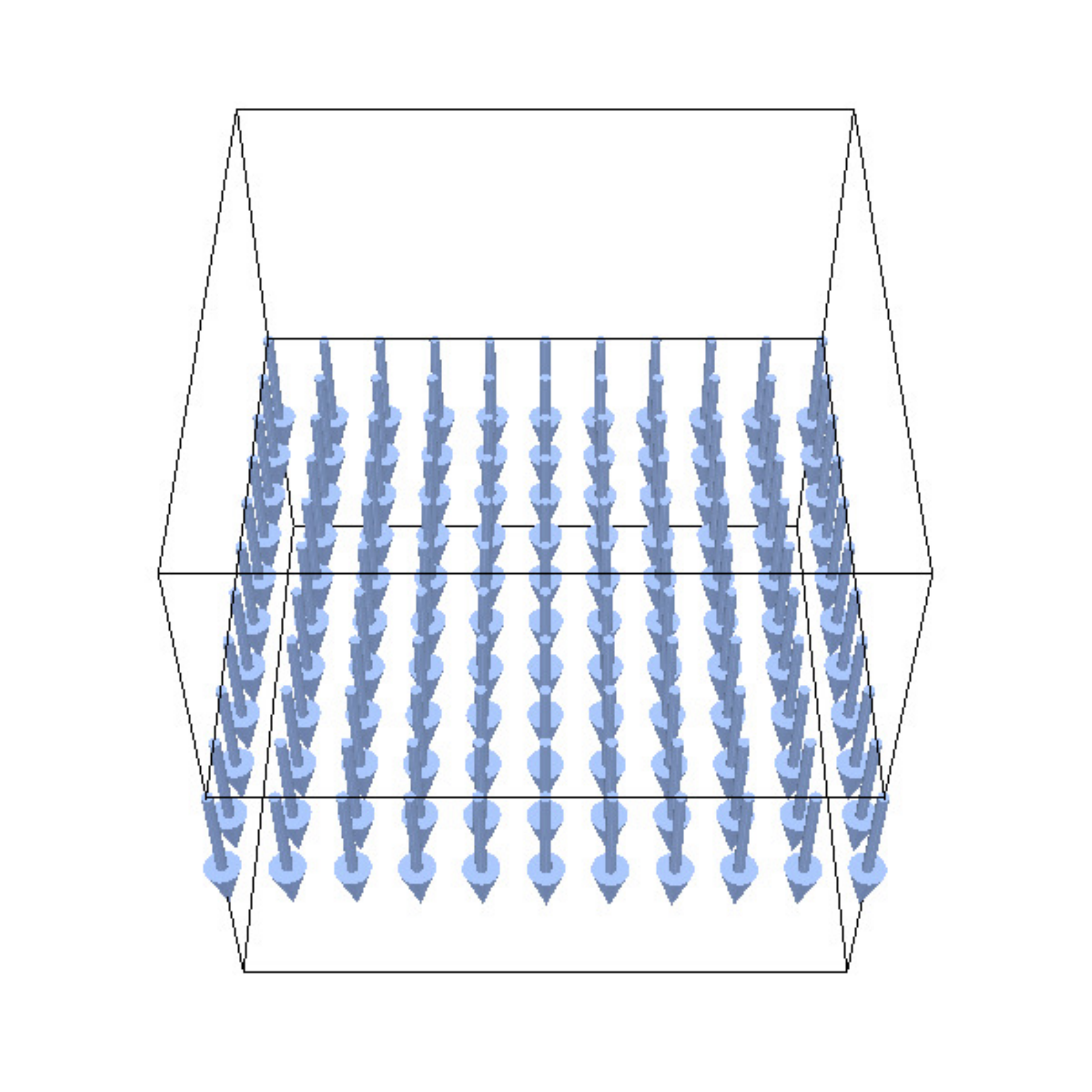}
\includegraphics[width=0.24\textwidth]{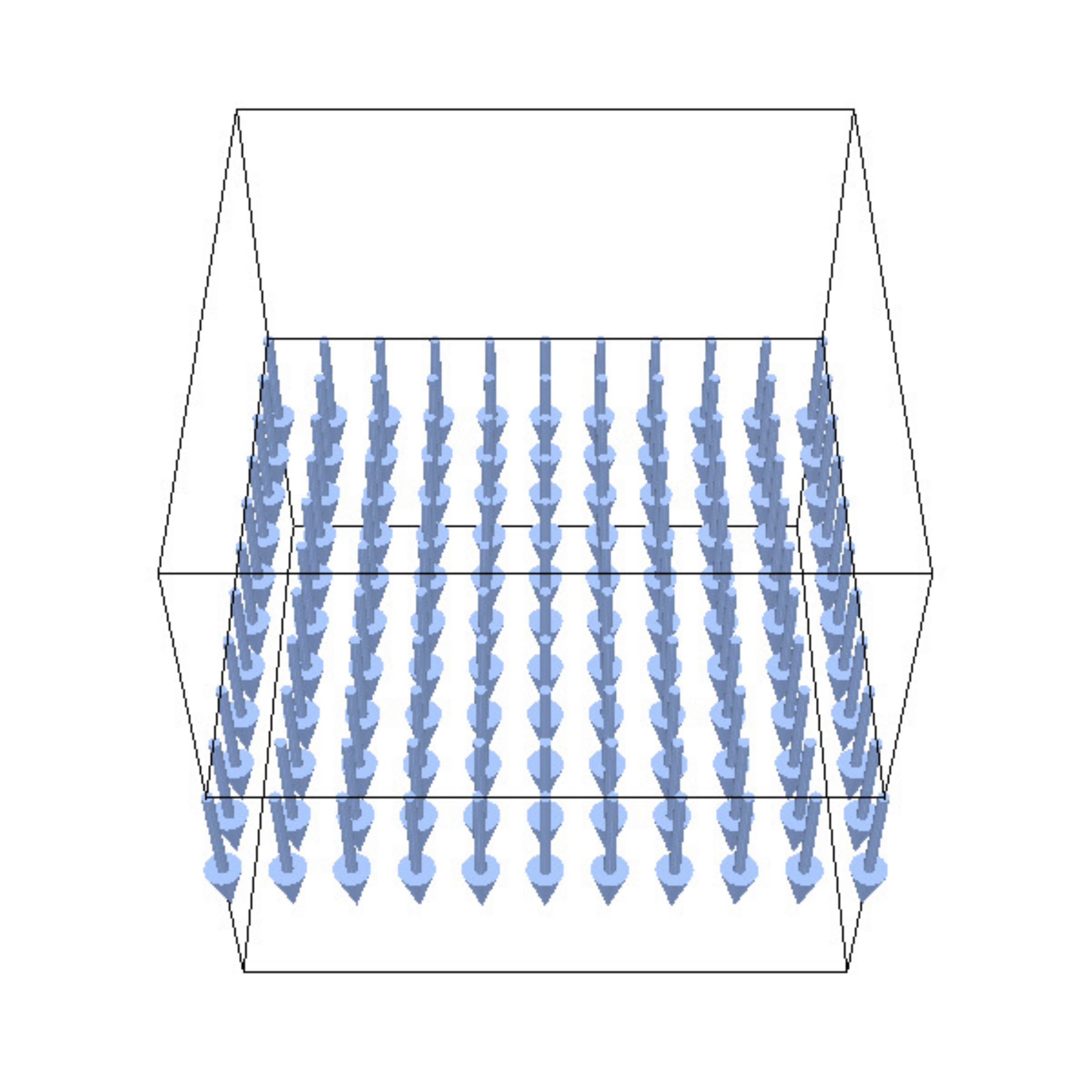}
\includegraphics[width=0.24\textwidth]{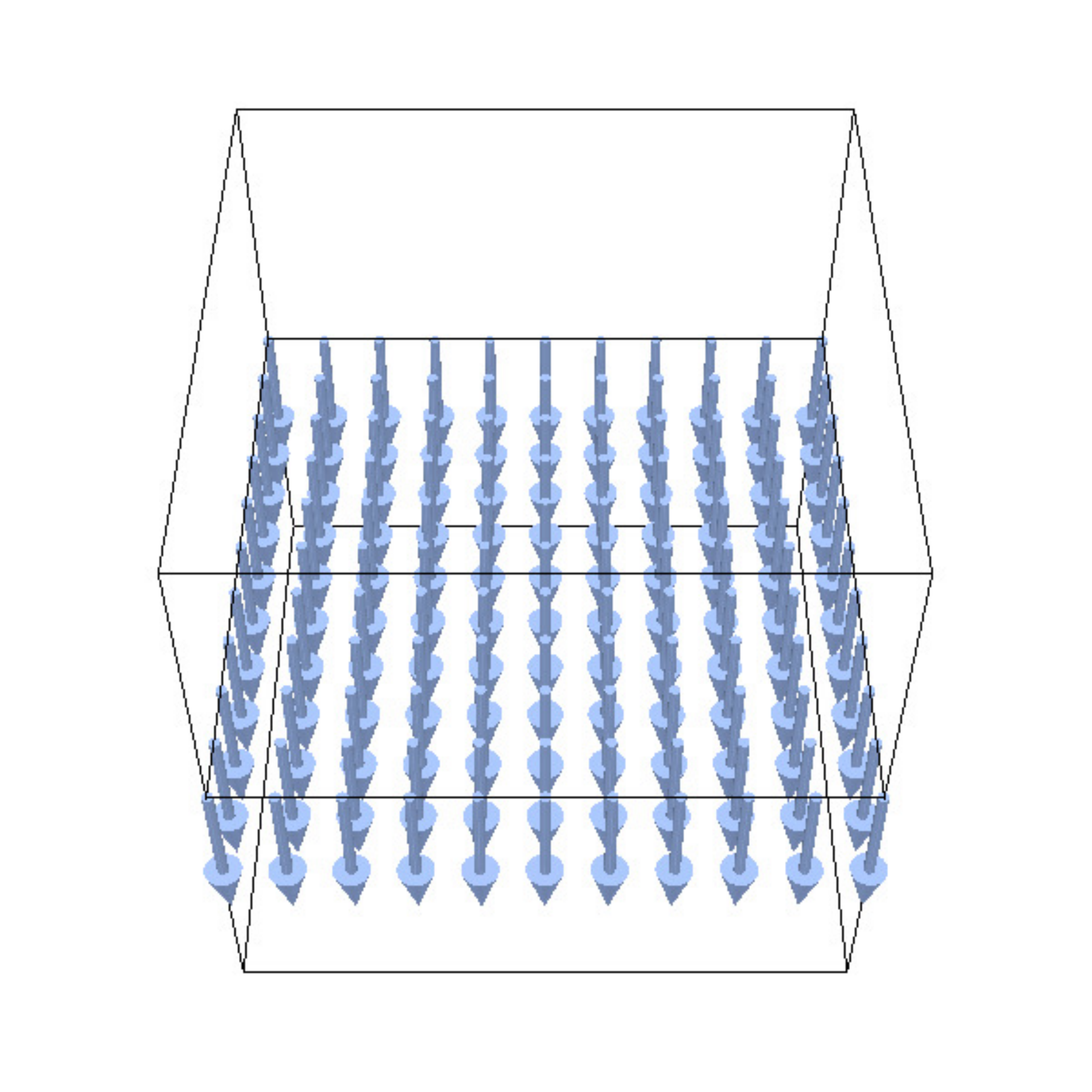}
\includegraphics[width=0.24\textwidth]{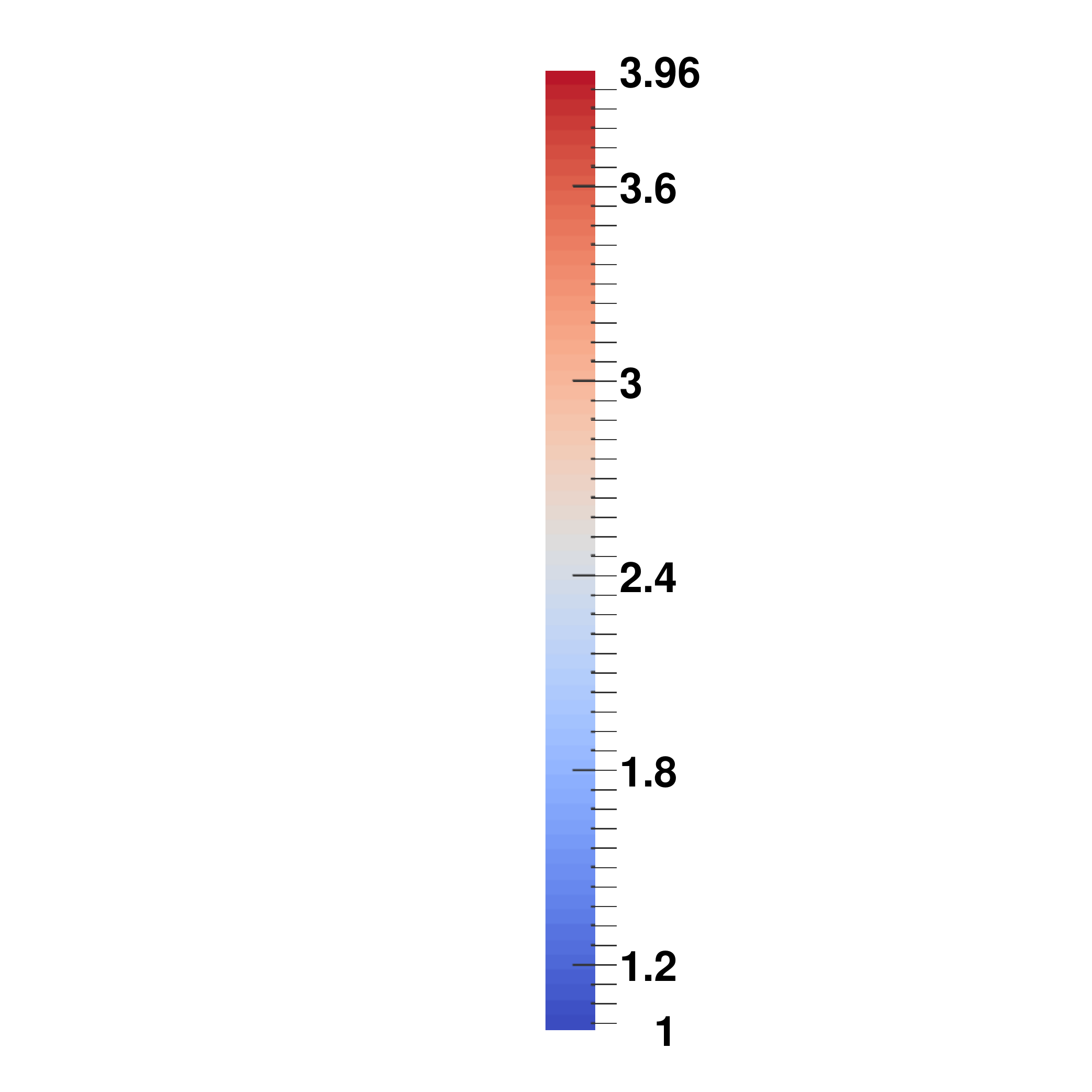}
\caption{Slice of the magnetic field $\bH_{hk}(t_i)$ at $[0,1]^2\times \{1/2\}$ for $i=0,\ldots,10$ with $t_i=0.2i$. The colour of the vectors represents the magnitude $|\bH_{hk}|$.
We observe only a slight movement in the middle of the cube combined with an overall reduction of field strength.}
\label{fig:h}
\end{figure}

\subsection{Example 2}
We use uniform time and space discretisation of the domain $D_T:= [0,0.1]\times D$ to partition $[0,0.1]$ into $1/k$ time intervals for $k\in\{0.001$, $0.002$, $0.004$, $0.008$, $0.016\}$
and $D$ into $\mathcal{O}(N^3)$ tetrahedra for $N\in \{5,10,15,20,25\}$.
\Cref{fig:MHNconv} shows convergence rates with respect to the space discretisation and \Cref{fig:MHkconv} with respect to the time discretization.
Since the exact solution is unknown, we use the finest computed approximation as a reference solution. The convergence plots reveal that the space discretization error dominates the time discretization error by far.
\comment{
The expected convergence rate~$\mathcal{O}(k)$ can be observed in
\Cref{fig:MHkconv} which underlines the theoretical results of
\Cref{thm:strongconvELLG}. It is less clear in \Cref{fig:MHNconv} if there is a
convergence of order~$\mathcal{O}(h)$. Preconditioners, a topic of further study, 
are required for implementation with larger values of~$N$. 
}


\begin{figure}
\centering
\psfrag{error}[cc][cc]{\tiny error}
\psfrag{N}[cc][cc]{\tiny number of elements per axis $\simeq 1/h$}
\psfrag{0.001}{\tiny $k=0.001$}
\psfrag{0.002}{\tiny $k=0.002$}
\psfrag{0.004}{\tiny $k=0.004$}
\psfrag{0.008}{\tiny $k=0.008$}
\psfrag{mNconv}[bc][tc]{\tiny Convergence of $\bm_{hk}$ with respect to $h$.}
\psfrag{HNconv}[bc][tc]{\tiny Convergence of $\bH_{hk}$ with respect to $h$.}
\includegraphics[width=0.4\textwidth]{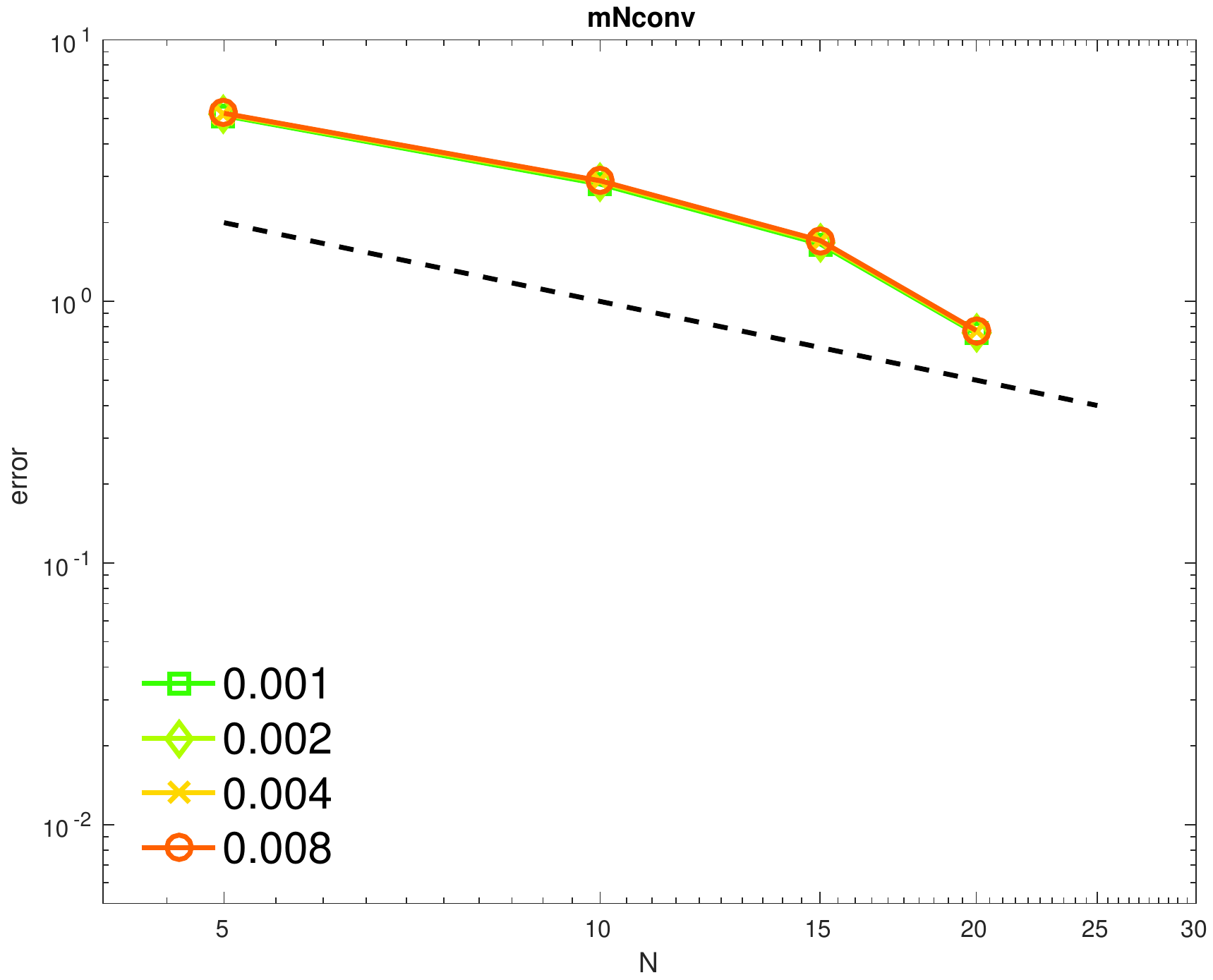}%
\qquad\includegraphics[width=0.4\textwidth]{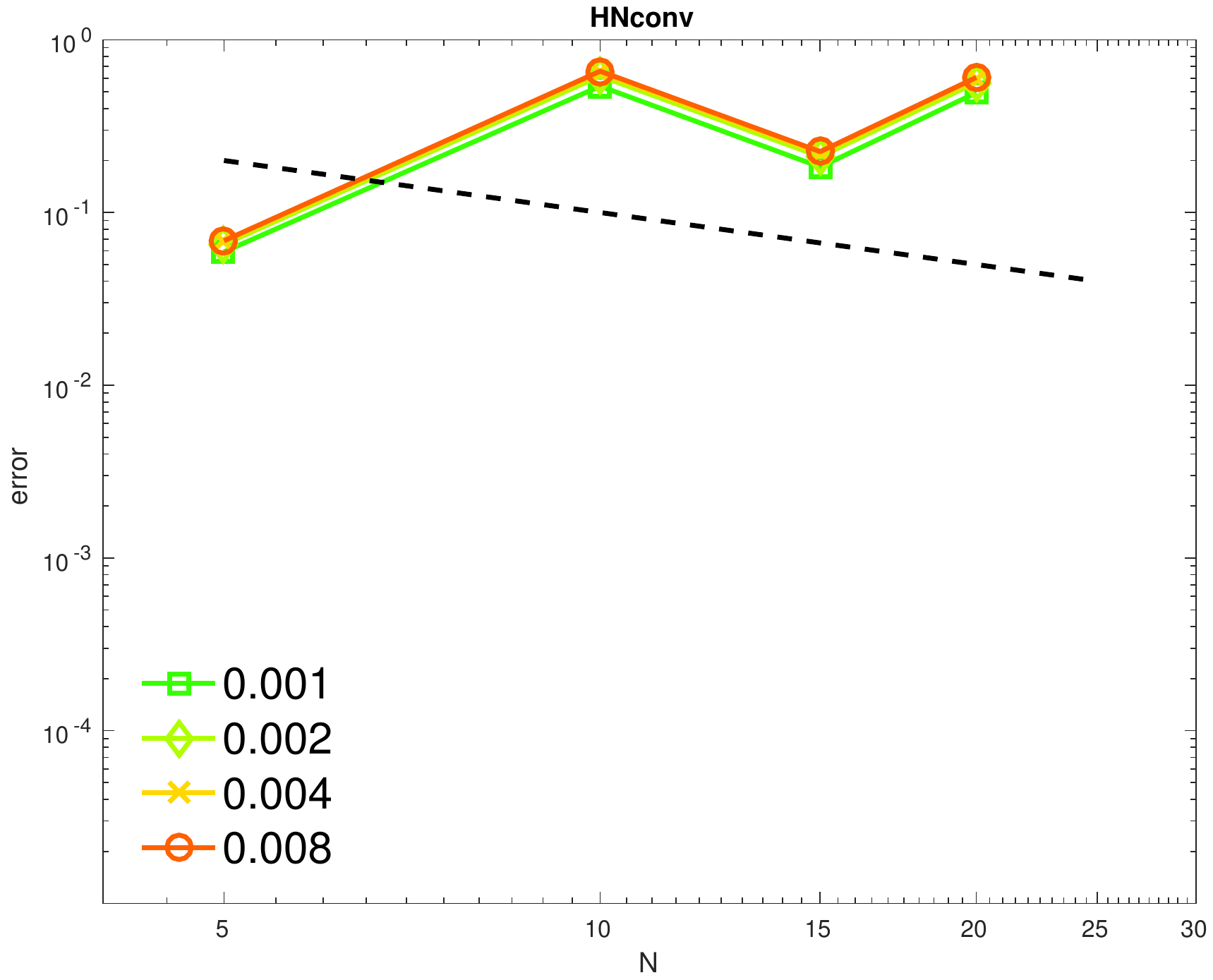}
\caption{The convergence rate of $\norm{\bm_\star-\bm_{hk}}{L^\infty(0,T;H^1(D))}$ (left)
and $\norm{\bH_\star-\bH_{hk}}{L^2(0,T;\Hcurl{D})}$ (right), where $\bm_\star=\bm_{hk}$ and $\bH_\star=\bH_{hk}$ for $h=1/25$ and $k=0.001$. The dashed line indicates $\mathcal{O}(h)$.}
\label{fig:MHNconv}
\end{figure}

\begin{figure}
\centering
\psfrag{error}[cc][cc]{\tiny error}
\psfrag{1/k}[cc][cc]{\tiny number time intervals $\simeq 1/k$}
\psfrag{m}{\tiny $\bm_{hk}$}
\psfrag{H}{\tiny $\bH_{hk}$}
\psfrag{mHKconv}[bc][tc]{\tiny Convergence  with respect to $k$.}
\psfrag{hKconv}[bc][tc]{\tiny Convergence of $\bH_{hk}$ with respect to $k$.}
\includegraphics[width=0.4\textwidth]{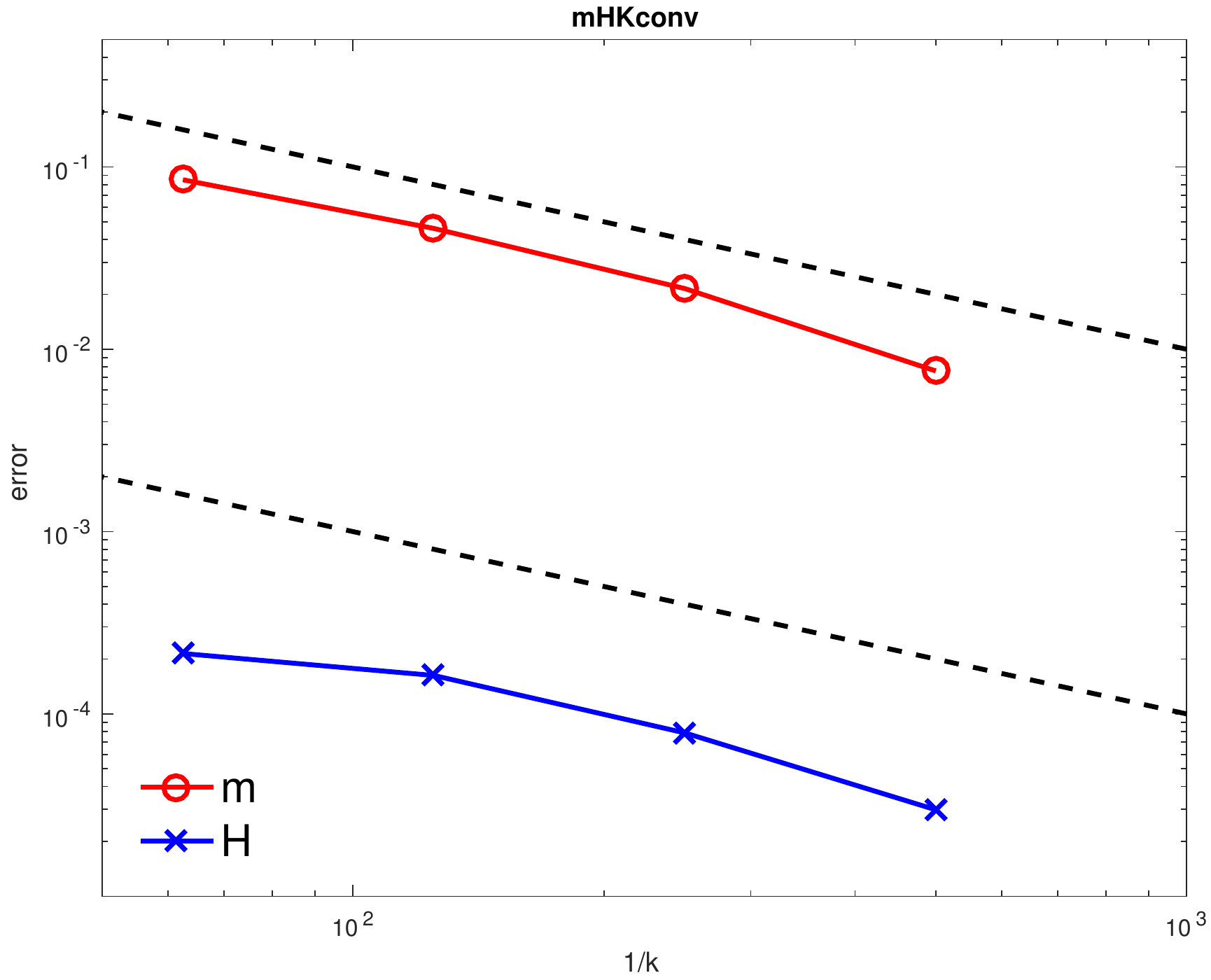}%
\caption{The convergence rate of $\norm{\bm_\star-\bm_{hk}}{L^\infty(0,T;H^1(D))}$ (red)
and $\norm{\bH_\star-\bH_{hk}}{L^2(0,T;\Hcurl{D})}$ (blue), where $\bm_\star=\bm_{h k_\star}$ and $\bH_\star=\bH_{hk_\star}$ for $h=1/25$ and $k_\star=0.001$. The dashed lines indicate $\mathcal{O}(k)$.}
\label{fig:MHkconv}
\end{figure}

\appendix
\section*{Appendix} Below, we state some well-known results.
\begin{lemma}\label{lem:h1seminorm}
Given $\TT_h$, there exists a constant $C_{\rm norm}>0$ which depends solely on the shape regularity of $\TT_h$ such that
\begin{align*}
C_{\rm norm}^{-1} \norm{\nabla \bw}{\Ltwo{D}}\leq \Big(h\sum_{T\in\TT_h}\sum_{z\in \NN_h\cap T} |\bw(z)-\bw(z_T)|^2\Big)^{1/2}\leq C_{\rm norm}\norm{\nabla\bw}{\Ltwo{D}}
\end{align*}
for all $\bw\in\SS^1(\TT_h)$ and some arbitrary choice of nodes $z_T\in \NN_h\cap T$ for all $T\in\TT_h$.
\end{lemma}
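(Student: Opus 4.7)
The plan is the standard scaling argument from the reference element combined with finite-dimensional norm equivalence. Fix a reference tetrahedron $\widehat T$ with vertices $\widehat z_1,\ldots,\widehat z_4$, and for every $T\in\TT_h$ let $F_T(\widehat x)=B_T\widehat x+b_T$ be the affine diffeomorphism with $F_T(\widehat T)=T$. Shape regularity of $\TT_h$ gives $\|B_T\|\simeq h$, $\|B_T^{-1}\|\simeq h^{-1}$ and $|\det B_T|\simeq h^3$, with equivalence constants depending only on the shape-regularity parameter.

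First I would pull everything back to $\widehat T$. Writing $\widehat\bw:=\bw\circ F_T$, the standard scaling identities yield
\begin{align*}
 \normLtwo{\nabla\bw}{T}^2 &\simeq h\,\normLtwo{\widehat\nabla\widehat\bw}{\widehat T}^2,
\\
 |\bw(z)-\bw(z_T)|^2 &= |\widehat\bw(\widehat z)-\widehat\bw(\widehat z_T)|^2
\qquad\text{for all } z=F_T(\widehat z),
\end{align*}
so that the quantity inside the middle term of the claimed equivalence, restricted to a single element $T$, equals $h\sum_{j=1}^{4}|\widehat\bw(\widehat z_j)-\widehat\bw(\widehat z_{T})|^2$, where $\widehat z_T$ is the vertex of $\widehat T$ mapping to $z_T$.

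The key step is a finite-dimensional norm equivalence on $\widehat T$. The space $\PP^1(\widehat T)$ is four-dimensional, and on its three-dimensional quotient by constants both maps
\begin{align*}
 \widehat\bw\longmapsto \normLtwo{\widehat\nabla\widehat\bw}{\widehat T},
\qquad
 \widehat\bw\longmapsto \Big(\textstyle\sum_{j=1}^{4}|\widehat\bw(\widehat z_j)-\widehat\bw(\widehat z_{j_0})|^2\Big)^{1/2}
\end{align*}
define seminorms that vanish exactly on constants; hence they are equivalent norms on the quotient. Crucially, there are only four possible choices for the distinguished vertex $\widehat z_{j_0}\in\{\widehat z_1,\ldots,\widehat z_4\}$, so by taking the worst of the four equivalence constants we obtain a single constant $\widehat C>0$, depending only on $\widehat T$, such that the equivalence holds uniformly in the choice of $\widehat z_{j_0}$.

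Combining the scaling identities with this uniform reference-element equivalence yields, elementwise,
\begin{align*}
 \widehat C^{-1}\,\normLtwo{\nabla\bw}{T}^2
 \leq h\sum_{z\in\NN_h\cap T}|\bw(z)-\bw(z_T)|^2
 \leq \widehat C\,\normLtwo{\nabla\bw}{T}^2,
\end{align*}
with constants depending only on shape regularity. Summing over $T\in\TT_h$ gives the global equivalence with $C_{\rm norm}$ independent of $h$ and of the per-element choice of $z_T$. The only mildly subtle point is the uniformity over the choice of $z_T$, which is handled by the finite number of reference-vertex configurations; beyond that, the argument is entirely routine.
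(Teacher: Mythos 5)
Your proof is correct and uses exactly the approach the paper indicates (the paper's proof is the single line ``The proof follows from scaling arguments''); you have simply written out in full the standard pull-back to the reference tetrahedron, the finite-dimensional norm equivalence on $\PP^1(\widehat T)/\text{constants}$, and the uniformity over the four possible choices of $\widehat z_{j_0}$. No gaps.
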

\begin{proof}
\revision{The proof follows from scaling arguments.}
\end{proof}

\begin{lemma}\label{lem:dis Gro}
If $\{a_i\}$, $\{b_i\}$, $\{c_i\}$ are sequences of non-negative numbers
satisfying
\[
a_{i+1}\leq (1+b_i)a_i+c_i\quad\text{for all }i\in\N_0
\]
then for all $j\in\N_0$ there holds
\[
a_j
\leq 
\exp\big(\sum_{i=0}^{j-1}b_i\big)
\big(
a_0 + \sum_{i=0}^{j-1}c_i
\big).
\]
\end{lemma}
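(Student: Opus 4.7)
The plan is to proceed by induction on $j$, unrolling the one-step recurrence and then applying the elementary inequality $1+x\le e^x$ valid for all $x\ge 0$. The base case $j=0$ is trivial since the right-hand side equals $a_0$ (the empty sum and product conventions give $\exp(0)(a_0+0)=a_0$).

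For the inductive step, I would first show by a straightforward induction that the recurrence $a_{i+1}\le (1+b_i)a_i+c_i$ iterates to
\[
a_j \le \Big(\prod_{i=0}^{j-1}(1+b_i)\Big) a_0 + \sum_{i=0}^{j-1}\Big(\prod_{l=i+1}^{j-1}(1+b_l)\Big) c_i,
\]
with the usual convention that an empty product equals $1$. This follows by applying the bound to $a_j$, substituting the inductive hypothesis for $a_{j-1}$, and regrouping terms; the nonnegativity of $b_i$ and $c_i$ ensures no inequalities flip.

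Next, I would invoke the pointwise inequality $1+b_i \le e^{b_i}$ (a consequence of convexity of the exponential), which gives for any indices $0\le p\le q$
\[
\prod_{l=p}^{q}(1+b_l)\le \exp\Big(\sum_{l=p}^{q}b_l\Big)\le \exp\Big(\sum_{i=0}^{j-1}b_i\Big),
\]
where the second inequality uses $b_l\ge 0$ to extend the sum. Substituting this bound into both the coefficient of $a_0$ and each coefficient of $c_i$ yields
\[
a_j \le \exp\Big(\sum_{i=0}^{j-1}b_i\Big)\Big(a_0 + \sum_{i=0}^{j-1}c_i\Big),
\]
which is the claim.

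There is no real obstacle here: this is the classical discrete Gr\"onwall lemma, and the only place one must be careful is the bookkeeping of the empty-product convention when $j=0$ and the fact that nonnegativity of $\{b_i\}$ is what allows the exponential upper bound to absorb every partial product uniformly. Since the lemma is standard, I would expect the authors to present only a very short proof (or cite a reference), and the sketch above essentially exhausts the argument.
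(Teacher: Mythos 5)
Your proof is correct and is exactly the induction argument the paper has in mind; the paper simply states that the lemma ``can be easily shown by induction'' and omits the details. You have supplied precisely those details correctly, including the unrolled recurrence, the use of $1+x\le e^x$ for $x\ge 0$, and the careful handling of empty products/sums in the base case.
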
 
\begin{proof}
The lemma can be easily shown by induction.
\end{proof}

\bibliographystyle{siamplain}
\bibliography{literature}
\end{document}